\documentclass[final,leqno]{scrarticle}

\usepackage[table]{xcolor}
\usepackage{mathtools}
\usepackage{todonotes}

\usepackage[ruled,vlined]{algorithm2e}
\usepackage{enumerate}
\usepackage{enumitem}
\usepackage{amsmath}
\usepackage{amssymb}
\usepackage{amsthm}
\usepackage{float}
\usepackage{comment}
\usepackage{graphicx}
\usepackage{textcomp}
\usepackage{appendix}
\usepackage{authblk}
\usepackage{bm}
\usepackage{dutchcal}
\usepackage{subcaption}
\usepackage{srcltx} 
\usepackage{hyperref}
\usepackage{url}
\hypersetup{draft=false, colorlinks=true, linkcolor=black, urlcolor=blue, citecolor=black}
\makeatletter
\g@addto@macro{\UrlBreaks}{\do\/\do-\do\_} 
\makeatother
\newtheorem{theorem}{Theorem}[section]
\newtheorem{lemma}[theorem]{Lemma}

\newtheorem{algo}[theorem]{Algorithm}
\newtheorem{remark}[theorem]{Remark}

\newtheorem{definition}[theorem]{Definition}

\usepackage{listings}
\lstset{
  basicstyle=\footnotesize,
  keywordstyle=\color{blue},
  frame = single,
  numberstyle = \tiny,
  numbers = left,
  numbersep =8pt,
  xleftmargin=.25in,
  xrightmargin=.25in
  }

\oddsidemargin=-1.5cm 
\evensidemargin=-1.5cm 
\textheight=26cm 
\textwidth=18.5cm 
\topmargin=-2.5cm 
\parindent1ex
\parskip0ex


\newcommand{\dx}{\,\mathrm{d}x} 
\newcommand{\dt}{\,\mathrm{d}t}
\newcommand{\dH}{\,\mathrm{d}h}
\newcommand{\dsig}{\,\mathrm{d}\sigma}
\newcommand{\dw}{\,\mathrm{d}w}
\newcommand{\du}{\,\mathrm{d}u}

\newcommand{\dphi}{\,\mathrm{d}\varphi}
\newcommand{\dmu}{\,\mathrm{d}\mu}
\newcommand{\LBdata}{\psi}
\newcommand{\LBfunc}{\vartheta}

\newcommand{\tfreq}{\mathcal{s}}

\newcommand{\spatial}{\mathbf{x}}
\newcommand{\statObj}{\xi}
\newcommand{\dynR}{\mathcal{R}}

\DeclareMathOperator{\diver}{div}

\DeclareMathOperator{\sign}{\mathrm{sign}}
\newcommand{\R}{\mathbb{R}} 
\newcommand{\N}{\mathbb{N}} 

\newcommand{\X}{\mathbb{X}}
\newcommand{\Y}{\mathbb{Y}}

\SetAlgoNoLine

\providecommand{\keywords}[1]
{
  {\small	
  \textbf{{Keywords---}} #1}
}

\author[1]{Gesa Sarnighausen}
\author[2]{Thorsten Hohage}
\author[3]{Martin Burger}
\author[4,5]{Andreas Hauptmann}
\author[6]{Anne Wald}

\affil[1]{Institute for Numerical and Applied Mathematics, University of G\"ottingen, Germany (\texttt{g.sarnighausen@math.uni-goettingen.de})}
\affil[2]{Institute for Numerical and Applied Mathematics, University of G\"ottingen, Germany (\texttt{hohage@math.uni-goettingen.de})}
\affil[3]{Helmholtz Imaging, Deutsches Elektronen-Synchrotron DESY, Notkestr. 85, Hamburg, 22607,
Germany (\texttt{martin.burger@desy.de})}
\affil[4]{Research Unit of Mathematical Sciences, University of Oulu, Finland (\texttt{andreas.hauptmann@oulu.fi})}
\affil[5]{Department of Computer Science, University College London, UK}
\affil[6]{Institute for Numerical and Applied Mathematics, University of G\"ottingen, Germany (\texttt{a.wald@math.uni-goettingen.de})}

\title{Regularization for time-dependent inverse problems: Geometry of Lebesgue-Bochner spaces and algorithms}

\date{{\em In memory of Alfred K.~Louis.}}        

\begin{document}

\maketitle

\begin{abstract}
\noindent
We consider time-dependent inverse problems in a mathematical setting using
Lebesgue-Bochner spaces. Such problems arise when one aims to recover a function from given observations where the function or the data depend on
time. Lebesgue-Bochner spaces allow to easily incorporate the different nature of time and space.

In this manuscript, we present two different regularization methods in Lebesgue-Bochner spaces:
\begin{enumerate}
    \item classical Tikhonov regularization in Banach spaces,
    \item temporal variational regularization by penalizing the time-derivative.
\end{enumerate} 
In the first case, we additionally investigate geometrical properties of Lebesgue-Bochner spaces. This particularly includes the calculation of the duality mapping and it is shown that these spaces are smooth of power type. The resulting Tikhononv regularization in Lebesgue-Bochner spaces is implemented using different regularities for time and space. 
Both methods are tested and evaluated for dynamic computerized tomography.

\end{abstract}

\keywords{inverse problems, time-dependence, regularization, geometry of Banach spaces, smoothness of power type, computerized tomography, Lebesgue-Bochner spaces}
\section{Introduction}

Time-dependent or dynamic inverse problems (DIPs) have become an important research interest in recent years. The applications
range from dynamic imaging to the identification of time-dependent parameters in partial differential equations for material characterization. 


 A linear dynamic inverse problem can be expressed by the operator equation
\begin{align*}
    A(\vartheta) = \psi, \quad A: \X \to \Y,
\end{align*}
where the linear forward operator $A$, the target $\vartheta$ or the data $\psi$ can depend on time. This time-dependence can, e.g., be expressed by an appropriate choice of the function spaces $\X$ and $\Y$. Lebesgue-Bochner spaces (see Definition~\ref{def:Bochner}) allow to incorporate the different nature of time and space in a natural way, as discussed in, e.g., \cite[Chapter 23]{zeidler2013linear}.

There are two different approaches in DIPs: either reconstructing the object of interest for only one time-point, or reconstructing a time series of the moving object. In the first case, the data is inconsistent due to the time-dependence of the object. In the latter case, the problem is heavily under-sampled since in general there are only a few measurements per time step available.
In this article, we examine the second case by considering Lebesgue-Bochner spaces $L^p(0,T;L^r(\Omega))$ for a bounded spatial domain $\Omega \subset \R^n, n\in \N$ and $1 < p,r < \infty$ as function spaces $\X$ and $\Y$. By consequence, we have time-dependent data of dimension $n + 1$, 
Similarly, we have a time-dependent target and forward operator.

Research on dynamic inverse problems is often linked to a concrete application, see~\cite{book_TDPIIP,A1-KLEIN;ET;AL:21}, such as dynamic computerized tomography~\cite{Burger_2017, bh2017, bh2014}, magnetic resonance
imaging~\cite{Hammernik2018}, emission tomography~\cite{hashimoto2019dynamic}, magnetic particle imaging~\cite{albers2023timedependent, bathke2017improved, A1-KALTENBACHER;ET;AL:21, knopp2012magnetic}, or structural health monitoring~\cite{A1-KLEIN;ET;AL:21, C2-LAMBWAVES-BUCH:18}.

Alfred K.~Louis, together with some of his students and collaborators, has played a pioneering role in the development of variational regularization techniques for dynamic inverse problems, including their efficient solution and applications to real-world problems. 
In particular, \cite{schmitt2002efficient1,schmitt2002efficient2} present a systematic approach in a time-discrete setting, which can be interpreted as a time-discretized version of the methods we aim to explore in this paper. Naturally, extending such variational regularization techniques to time-continuous settings involving time derivatives leads to the consideration of regularization in Lebesgue–Bochner spaces.


Many methods for solving general dynamic inverse problems include additional information on the motion into the regularization scheme, e.g., some kind of motion model or a fixed state of the object serving as a template~\cite{bh2014,Burger_2017, Gris_2020, Hahn_2014}. 
It is also possible to consider the dynamic model an inexact version of the static model~\cite{bhw20, Nitzsche22, Goedeke_2023, masterarbeit}.
An overview of variational approaches to solve DIPs in a semi-discretized setting for image reconstruction with a main focus on methods including parametrized motion models is given in~\cite{Hauptmann2021}.   Since the motion is often not known, we aim to include the time-dependence only implicitly without the explicit use of a motion model.
\cite{Arridge22DIP} considers reconstruction techniques for fully discretized inverse problems that take the different nature of time and space into account without assuming an explicit motion model. 

In contrast, in~\cite{Burger_2024} the authors develop a continuous theory for DIPs in Lebesgue-Bochner spaces in accordance with the classical inverse problems theory. In particular, they extend the classical definition of ill-posedness and regularization for inverse problems in Banach spaces to DIPs in Lebesgue-Bochner spaces. To underline this, time-dependent regularization methods are analyzed theoretically but a practical application is still missing. In this article, we support the research of~\cite{Burger_2024} by presenting concrete regularization methods with practical applications. 

In particular, we aim to include the nature of time into regularization methods for general linear dynamic inverse problems mainly by using Lebesgue-Bochner spaces.
We propose two different approaches:
\begin{enumerate}
    \item Tikhonov regularization in Lebesgue-Bochner spaces where the difference of the temporal and spatial variable can be expressed by using different exponents for time and space in the Lebesgue-Bochner space,
    \item adding the time-derivative as a penalty term to the Tikhonov functional leading to a regularized solution that favors small changes in time.
\end{enumerate}

The aim of the first approach is to adapt a classical Tikhonov regularization algorithm in general Banach spaces to Lebesgue-Bochner spaces. This includes the nature of time only in the function space setting and does not require any conditions or previous knowledge on the movement. Thus, we do not expect great improvement of this regularization method compared to a static setting. However, it is interesting to examine how the different choices of exponents for time and space in the Lebesgue-Bochner setting influence the reconstructions.

To implement this approach, we make use of the dual method~\cite[Section 5.3.2]{RegularizationBanachSpaces} that minimizes the Tikhonov functional by gradient descent in the dual space for Banach spaces that are smooth of power type, i.e., they fulfill certain geometric smoothness conditions. To adapt this method for Lebesgue-Bochner spaces, we first investigate the geometrical properties of these spaces. In particular, we show that Lebesgue-Bochner spaces are smooth of power type and derive their duality mappings.

The second approach aims to include the nature of time not only by the choice of function spaces but additionally by penalizing the norm of the time derivative. For this, we assume that the object behaves 'smoothly' in time and postulate a certain regularity of the time derivative. In contrast to the first approach, we do not consider a Banach space setting, but a Hilbert space setting. This allows the use of more efficient algorithms compared to the Banach space setting.

For this approach, we derive a scheme that minimizes a variational problem penalizing the time derivative. We solve this problem by gradient descent using a two-step method where the first step is a Landweber step in Lebesgue-Bochner spaces minimizing the data discrepancy. The second step is responsible for the penalty terms and is only given implicitly by an $(n+1)-$dimensional PDE which we solve in Fourier space using the eigenfunctions of the underlying operator.

To evaluate these approaches in practice, we apply them both to dynamic computerized tomography. 

\emph{Outline.} 
The article is structured as follows. The first part (Section~\ref{sec:geom}) covers the geometry of Lebesgue and Lebesgue-Bochner spaces. After the basics of Lebesgue-Bochner spaces are introduced in Section~\ref{sec:lebesgue-bochner}, we derive the duality mappings in Lebesgue-Bochner spaces (Section~\ref{sec:dualityMapping}). Then we consider convexity and smoothness of Banach spaces in Section~\ref{sec:convexitySmoothness} where our main result (Lemma~\ref{lem:XuRoachCorollaryPsmooth}) is the proof of a consequence of the Xu-Roach inequalities. Finally, we apply these results to show that Lebesgue and Lebesgue-Bochner spaces are smooth of power type (Section~\ref{sec:smoothnessLebesgue(Bochner)}).
With the theoretical findings of Section~\ref{sec:geom}, we derive two regularization algorithms in the context of time-dependent inverse problems: classical Tikhonov regularization in Lebesgue-Bochner spaces (Section~\ref{sec:TikhonovDual}) and a regularization method penalizing the time derivative (Section~\ref{sec:TimeDerivative}). The last part, Section~\ref{sec:NumApplicationDynCT}, contains various numerical experiments with an application in dynamic computerized tomography.

\emph{Notation.}
In the following, let $X$ be a Banach space, $X^*$ its dual space, $\Omega \subset \R^n$ a bounded domain and $(S,\mathcal{A},\mu)$ a finite measure space.
In this article, we will omit the index indicating the space of norms if it is clear from the context which space is meant, i.e., we will write $\| x \|$ instead of $\| x \|_X$ for $x \in X$. The unit sphere in $X$ will be denoted by $S_X$, i.e., $S_X = \{ x \in X : \| x \| = 1 \}$. For an exponent $p$ with $1 < p < \infty$, its conjugate exponent $p^*$ is defined by $\frac{1}{p} + \frac{1}{p^*} = 1$.
Functions from Lebesgue-Bochner spaces will be denoted by $\LBfunc, \LBdata$, other functions or functionals by $f,g$ and elements from a general Banach space $X$ by $x$ and $y$.

\section{Geometry of Lebesgue and Lebesgue-Bochner spaces}
\label{sec:geom}

We are interested in minimizing a linear Tikhonov functional in Banach spaces.
 One algorithm to do this is the dual method presented in~\cite[Section 5.3.2]{RegularizationBanachSpaces}. 
In the next sections, we are interested in adapting this method for Lebesgue and Lebesgue-Bochner spaces.

To implement the dual method, see Section~\ref{sec:TikhonovDual}, we need to determine the duality mapping and a constant $G^X_{s}$ from the Definition~\ref{def:smoothness} of smoothness of power-type
for Lebesgue-Bochner spaces. We start with some definitions.


\subsection{Bochner spaces}
\label{sec:lebesgue-bochner}

\begin{definition}
\label{def:Bochner}
    The \textbf{Lebesgue-Bochner space} $L^p(S;X)$ for $1 \leq p < \infty$ is the linear space of all strongly $\mu$-measurable functions $\LBfunc :S\to X$ with $\int_S \lVert \LBfunc \rVert_X^p \dmu < \infty$. See~\cite{AnalysisBanachSpaces} for a definition and discussion of strong measurability.
    The space $L^p(S;X)$ is a Banach space equipped with the norm
    \begin{align*}
        \lVert \LBfunc \rVert_{L^p(S;X)} &= \left( \int_S \lVert \LBfunc \rVert_X^p \dmu \right)^{\frac{1}{p}} \quad 1 \leq p < \infty 
    \end{align*}
\end{definition}

A function $f : S \to X$ is an element of $L^p(S;X)$ if and only if $\lVert f \rVert_X : S \to \R$ is an element of $L^p(S)$.
For the convergence of regularization algorithms in Banach spaces, it is helpful if the Banach space be reflexive. Thus we are interested in what way the duality results for Lebesgue spaces transfer to Bochner spaces. 

\begin{theorem}
\label{th:dualityBochner}
    If $X$ is reflexive or $X^*$ is separable, it holds $  \left( L^p(S;X) \right)^* = L^{p*}(S; X^*)$
    for adjoint exponents, i.e., $\frac{1}{p} + \frac{1}{p^*} = 1$ and $1 < p < \infty$.
\end{theorem}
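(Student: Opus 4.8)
The plan is to establish the two inclusions separately. The inclusion $L^{p^*}(S;X^*)\subseteq(L^p(S;X))^*$ holds for every Banach space $X$: to $g\in L^{p^*}(S;X^*)$ one associates the functional $\LBfunc\mapsto\int_S\langle\LBfunc(s),g(s)\rangle\dmu$ on $L^p(S;X)$, which is well defined and, by H\"older's inequality applied to the scalar function $s\mapsto\lVert\LBfunc(s)\rVert_X\lVert g(s)\rVert_{X^*}$, bounded with operator norm at most $\lVert g\rVert_{L^{p^*}(S;X^*)}$; testing against $\LBfunc(s)=\lVert g(s)\rVert_{X^*}^{p^*-1}u(s)$, with $u(s)\in X$ a unit vector nearly maximizing $\langle\,\cdot\,,g(s)\rangle$, shows the operator norm equals $\lVert g\rVert_{L^{p^*}(S;X^*)}$, so the embedding $L^{p^*}(S;X^*)\hookrightarrow(L^p(S;X))^*$ is isometric. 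The substance of the theorem is that, under the stated hypotheses, this embedding is onto.

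For surjectivity I would reduce to the Radon--Nikod\'ym property (RNP) of $X^*$, which both hypotheses supply: if $X$ is reflexive then $X^*$ is reflexive and hence has the RNP, while if $X^*$ is separable it has the RNP by the Dunford--Pettis theorem (a separable dual space has the RNP). Given $\Phi\in(L^p(S;X))^*$, define $F:\mathcal{A}\to X^*$ by $F(A)(x)=\Phi(\mathbf{1}_A\otimes x)$ for $x\in X$. Since $\lVert\mathbf{1}_A\otimes x\rVert_{L^p(S;X)}=\lVert x\rVert_X\,\mu(A)^{1/p}$ we obtain $\lVert F(A)\rVert_{X^*}\le\lVert\Phi\rVert\,\mu(A)^{1/p}$, and summing this estimate over partitions (while testing against near-maximizing directions) shows that $F$ is a countably additive $X^*$-valued measure of bounded variation, absolutely continuous with respect to $\mu$. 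By the RNP there is $g\in L^1(S;X^*)$ with $F(A)=\int_A g\dmu$ for all $A\in\mathcal{A}$, and unravelling the definitions gives $\Phi(\LBfunc)=\int_S\langle\LBfunc(s),g(s)\rangle\dmu$ first for $\mu$-simple $\LBfunc$ and then, by density of simple functions in $L^p(S;X)$ together with continuity of both sides, for all $\LBfunc\in L^p(S;X)$.

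It remains to upgrade the integrability of the density from $L^1$ to $L^{p^*}$. This is the technical core of the argument: on the sets where $\lVert g\rVert_{X^*}$ is large one tests $\Phi$ against normalized simple functions built from near-maximizers of $\langle\,\cdot\,,g(s)\rangle$, the requisite measurable choice of directions being furnished by the strong measurability of $g$, and a truncation together with monotone convergence yields $\left(\int_S\lVert g\rVert_{X^*}^{p^*}\dmu\right)^{1/p^*}\le\lVert\Phi\rVert$, so the representation is in fact isometric. I expect the genuine obstacles to be the bounded-variation and $\mu$-continuity verification for $F$ (where the finiteness of $\mu$ is essential) and the measurable selection needed to recover $L^{p^*}$-integrability (the step where $p>1$ is genuinely used); the remaining estimates are routine bookkeeping made harmless by $\mu(S)<\infty$. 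Alternatively, one may cite the statement directly from \cite{AnalysisBanachSpaces}, where the duality of Bochner spaces is established under exactly the RNP hypothesis that the present assumptions imply.
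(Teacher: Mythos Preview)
Your proposal is correct and in fact goes well beyond what the paper does: the paper does not prove this theorem at all but simply writes ``A proof can be found in~\cite[Corollary 1.3.13]{AnalysisBanachSpaces}.'' Your sketch via the Radon--Nikod\'ym property of $X^*$ is the standard route (and is essentially what the cited reference contains), and you even note at the end that one could just cite \cite{AnalysisBanachSpaces} directly---which is precisely the paper's choice.
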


 A proof can be found in~\cite[Corollary 1.3.13]{AnalysisBanachSpaces}. In the following, we only consider reflexive spaces $X$ and can therefore use the duality result from Theorem~\ref{th:dualityBochner}.
We finish this section by stating an embedding result for Lebesgue-Bochner spaces.
\begin{lemma}
\label{lem:embeddingBochner}
    For $1 < p \leq q \leq \infty$ and $1 < r \leq s \leq \infty$ we have the continuous embedding $L^q(S;X) \subseteq  L^p(S;X)$ and $L^q(S;L^s(\Omega)) \subseteq L^p(S;L^r(\Omega))$ where $\Omega \subset \mathbb{R}^n$, $n \in \mathbb{N}$ is finite. 
\end{lemma}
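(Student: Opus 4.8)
The plan is to reduce the Lebesgue--Bochner statement to the classical scalar inclusion $L^q(S) \subseteq L^p(S)$ for finite measure spaces, which in turn follows from H\"older's inequality. First I would recall that by the characterization stated right after Definition~\ref{def:Bochner}, a strongly $\mu$-measurable function $\LBfunc : S \to X$ lies in $L^q(S;X)$ if and only if the scalar function $s \mapsto \lVert \LBfunc(s) \rVert_X$ lies in $L^q(S)$, and likewise for $L^p(S;X)$. Hence the embedding $L^q(S;X) \subseteq L^p(S;X)$ is equivalent to the scalar embedding $L^q(S) \subseteq L^p(S)$ applied to the nonnegative function $\lVert \LBfunc(\cdot) \rVert_X$, with the norm bound $\lVert \LBfunc \rVert_{L^p(S;X)} = \bigl\lVert \, \lVert \LBfunc(\cdot)\rVert_X \, \bigr\rVert_{L^p(S)} \le C \, \bigl\lVert \, \lVert \LBfunc(\cdot)\rVert_X \, \bigr\rVert_{L^q(S)} = C \, \lVert \LBfunc \rVert_{L^q(S;X)}$ transferring verbatim.

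For the scalar inclusion, I would write $1 = \frac{p}{q} + \frac{q-p}{q}$ (the case $p=q$ being trivial and $q=\infty$ handled separately by a crude pointwise bound) and apply H\"older's inequality with exponents $\frac{q}{p}$ and $\frac{q}{q-p}$ to the product $\lvert g \rvert^p \cdot 1$, where $g = \lVert \LBfunc(\cdot)\rVert_X$. This yields $\int_S \lvert g \rvert^p \dmu \le \bigl( \int_S \lvert g\rvert^{q} \dmu\bigr)^{p/q} \mu(S)^{(q-p)/q}$, so that $\lVert g \rVert_{L^p(S)} \le \mu(S)^{\frac{1}{p} - \frac{1}{q}} \lVert g \rVert_{L^q(S)}$; this is exactly where finiteness of the measure space $(S,\mathcal{A},\mu)$ is used, and it is also where the hypothesis $p > 1$ is in fact not needed ($p \ge 1$ suffices), but we keep it for consistency with the rest of the paper.

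The second assertion $L^q(S;L^s(\Omega)) \subseteq L^p(S;L^r(\Omega))$ follows by applying the first assertion twice: in the spatial variable, finiteness of $\Omega \subset \R^n$ (equipped with Lebesgue measure) gives the continuous embedding $L^s(\Omega) \subseteq L^r(\Omega)$ for $r \le s$, so that pointwise in time $\lVert \LBfunc(s) \rVert_{L^r(\Omega)} \le c \, \lVert \LBfunc(s)\rVert_{L^s(\Omega)}$; then the first assertion with $X = L^r(\Omega)$, together with the monotone substitution of this pointwise bound under the $L^q$-then-$L^p$ time integral, yields the claim. The only mild subtlety — and the one place I would be slightly careful — is making sure strong $\mu$-measurability is preserved along the chain: a function valued in $L^s(\Omega)$ that is strongly measurable is also strongly measurable as an $L^r(\Omega)$-valued function because the inclusion map $L^s(\Omega)\hookrightarrow L^r(\Omega)$ is continuous (indeed bounded linear), and composition with a continuous map preserves strong measurability. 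Beyond that bookkeeping, there is no real obstacle; the statement is essentially H\"older's inequality dressed up in Bochner notation.
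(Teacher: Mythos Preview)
Your proof is correct and follows essentially the same route as the paper: apply the scalar inclusion $L^s(\Omega)\subseteq L^r(\Omega)$ pointwise in time, then the scalar inclusion $L^q(S)\subseteq L^p(S)$ to the function $t\mapsto\lVert\LBfunc(t)\rVert$. The paper is terser (it invokes the Lebesgue embedding by name rather than writing out the H\"older argument) and does not address strong measurability, so your additional care on that point and the explicit constant $\mu(S)^{1/p-1/q}$ are welcome refinements rather than deviations.
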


\begin{proof}
    We use $\lesssim$ for relations up to multiplicative constants independent of $\LBfunc$.\\
    Let $\LBfunc \in L^q(S;L^s(\Omega))$. Then we have
    \begin{align*}
        \lVert \LBfunc \Vert_{L^p(S;L^r(\Omega))} 
        &= \left( \int_S \lVert \LBfunc(t) \rVert_{L^r(\Omega)}^p \dt \right)^{\frac{1}{p}} 
        \lesssim \left( \int_S \lVert \LBfunc(t) \rVert_{L^s(\Omega)}^p \dt \right)^{\frac{1}{p}}
        \lesssim \left( \int_S \lVert \LBfunc(t) \rVert_{L^s(\Omega)}^q \dt \right)^{\frac{1}{q}} 
        = \lVert \LBfunc \Vert_{L^q(S;L^s(\Omega))} 
    \end{align*}
    where the first inequality follows by the Rellich-Kondrachov embedding theorem for Lebesgue spaces, see, e.g.~\cite[Chapter 6]{adams2003sobolev}, and the second inequality follows by this embedding theorem applied to the norm $\lVert \LBfunc(t) \rVert_{L^s(\Omega)}$ which is a function in $L^q(S)$.

    The first embedding follows analogously by only using the embedding theorem for Lebesgue spaces once.
\end{proof}

\subsection{Duality mapping}
\label{sec:dualityMapping}

In this section, we derive the duality mapping in Lebesgue-Bochner spaces, which is an important tool in regularization in Banach spaces, see also \cite{RegularizationBanachSpaces}. To this end, we shortly recap a few necessary definitions and statements for general Banach spaces. 


\begin{definition}{(duality mapping)}
\label{def:dualityMapping}
    Let $X$ be a normed space. 
        Then the set-valued \textbf{duality map} $ J^X_q : X \rightrightarrows X^*$ of the space $X$ with respect to the function $h \mapsto h^{q-1}$ for $q \geq 1$ is defined as
    \begin{align}
    \label{eq:def_dualityMap}
       J^X_q (x) \coloneqq \{ x^* \in X^* : \langle x , x^* \rangle = \lVert x \rVert \lVert x^* \rVert, \lVert x^* \rVert = \lVert x \rVert^{q-1} \}.
    \end{align}
    For $q = 2$ the duality map $J^X_2$ is called the normalized duality map.
    A single-valued selection of the duality mapping is denoted by $j^X_q$.
\end{definition}

To compute the duality map $J^X_q$, we can use the subdifferential which is defined for convex functionals. For more details on geometric properties of Banach spaces we refer to~\cite[Chapter II]{AnalysisBanachSpaces} or \cite{chidume2008geometric}.


\begin{definition}{(subgradient, subdifferential)}
\label{def:subgradient}
    Let $f: X \to \R \cup \{\infty\}$ be a convex functional. Then a functional from the dual space $x^* \in X^*$ is called a \textbf{subgradient} of $f$ in $x$ if 
    \begin{align*}
        f(y) \geq f(x) + \langle x^*, y-x \rangle
    \end{align*}
    for all $y \in X$. The set of all subgradients is denoted by $\partial f (x)$ and is called the \textbf{subdifferential}. 
\end{definition}

Asplund's theorem combines the subgradient with the duality map:

\begin{theorem}{(Asplund's theorem)}
\label{thm:asplund}
    For a normed space $X$ and $q \geq 1$, the duality map $J^X_q$ can be computed by $ J^X_q = \partial \left( \frac{1}{q} \lVert \cdot \rVert^q \right).$
\end{theorem}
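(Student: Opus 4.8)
The statement is classical (see, e.g., \cite{RegularizationBanachSpaces}); here is the route I would take. Note first that $\tfrac1q\|\cdot\|^q$ is convex, being the composition of the nondecreasing convex function $t\mapsto \tfrac1q t^q$ on $[0,\infty)$ with the norm, so its subdifferential is well defined and Definition~\ref{def:subgradient} applies. I would prove the two set inclusions $J^X_q(x)\subseteq\partial\big(\tfrac1q\|\cdot\|^q\big)(x)$ and $\partial\big(\tfrac1q\|\cdot\|^q\big)(x)\subseteq J^X_q(x)$ separately for each fixed $x$, using Young's inequality $ab\le \tfrac1q a^q+\tfrac1{q^*}b^{q^*}$ (for $a,b\ge0$, $1<q<\infty$) as the main tool. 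An alternative is to invoke the subdifferential chain rule together with the known formula $\partial\|\cdot\|(x)=\{x^*:\|x^*\|\le1,\ \langle x^*,x\rangle=\|x\|\}$, but the direct argument is self-contained.

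For the inclusion $J^X_q(x)\subseteq\partial\big(\tfrac1q\|\cdot\|^q\big)(x)$, let $x^*$ satisfy $\langle x,x^*\rangle=\|x\|\,\|x^*\|$ and $\|x^*\|=\|x\|^{q-1}$. For arbitrary $y\in X$ I would estimate $\langle x^*,y-x\rangle=\langle x^*,y\rangle-\|x\|^q\le \|x^*\|\,\|y\|-\|x\|^q=\|x\|^{q-1}\|y\|-\|x\|^q$, and then, using the identity $(q-1)q^*=q$, Young's inequality gives $\|x\|^{q-1}\|y\|\le\tfrac1{q^*}\|x\|^q+\tfrac1q\|y\|^q$. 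Combining the two estimates and rearranging yields exactly $\tfrac1q\|y\|^q\ge\tfrac1q\|x\|^q+\langle x^*,y-x\rangle$, i.e. the subgradient inequality; this works verbatim also for $x=0$.

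For the reverse inclusion, assume $x\neq0$ (the case $x=0$ is discussed below) and let $x^*$ be a subgradient of $\tfrac1q\|\cdot\|^q$ at $x$. Testing the subgradient inequality with $y=tx$, $t\in\R$, reduces it to a scalar inequality in $t$ for which $t=1$ is an interior minimizer; differentiating at $t=1$ gives $\langle x^*,x\rangle=\|x\|^q$. Substituting this back, the subgradient inequality becomes $\langle x^*,y\rangle\le\tfrac1q\|y\|^q+\tfrac1{q^*}\|x\|^q$ for all $y$; testing with $y=sz$ for $\|z\|=1$ and $s>0$, taking the supremum over such $z$, and optimizing the resulting scalar bound over $s$ (the optimal value being $s=\|x\|$) gives $\|x^*\|\le\|x\|^{q-1}$, while $\|x\|^q=\langle x^*,x\rangle\le\|x^*\|\,\|x\|$ gives the matching lower bound. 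Hence $\|x^*\|=\|x\|^{q-1}$ and $\langle x^*,x\rangle=\|x\|\,\|x^*\|$, i.e. $x^*\in J^X_q(x)$.

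The only genuine obstacle is the degenerate case $x=0$ together with the endpoint $q=1$. For $q>1$ and $x=0$ both sides collapse to $\{0\}$: the subgradient inequality $\tfrac1q\|y\|^q\ge\langle x^*,y\rangle$ forces $x^*=0$ after replacing $y$ by $ty$, dividing by $t$, and letting $t\to0^+$, while $J^X_q(0)=\{0\}$ follows directly from Definition~\ref{def:dualityMapping}. For $q=1$ the scalar optimizations degenerate, and one instead identifies both $J^X_1(x)$ and $\partial\|\cdot\|(x)$ with $\{x^*:\|x^*\|\le1,\ \langle x^*,x\rangle=\|x\|\}$ for $x\neq0$, and with the closed unit ball of $X^*$ for $x=0$ under the usual convention on the weight function $h\mapsto h^{q-1}$ at $h=0$.
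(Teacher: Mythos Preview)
Your argument is correct: the Young-inequality route for the inclusion $J^X_q(x)\subseteq\partial\bigl(\tfrac1q\|\cdot\|^q\bigr)(x)$ and the scalar-optimization route (test $y=tx$, then $y=sz$) for the reverse inclusion are the standard textbook steps, and you handle the edge cases $x=0$ and $q=1$ with appropriate care.

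Note, however, that the paper does not supply its own proof of this statement; it simply refers the reader to \cite{subdifferentialConvexFunctionals, Asplund}. Thus there is no ``paper's proof'' to compare against beyond those references. Your self-contained derivation is more than what the paper itself offers, and is in line with the arguments found in the cited literature (and in \cite{RegularizationBanachSpaces}, which you mention).
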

A proof can be found, e.g., in~\cite{subdifferentialConvexFunctionals, Asplund}.

If the functional $f$ is Gateaux-differentiable, the subdifferential is single-valued and coincides with the gradient, i.e., $\partial f = \{ \nabla f \}$, see~\cite[Chapter 1, §2]{cioranescu1990geometry}. In the following if not specified otherwise we will only consider smooth norms, such that $j_q^X$ is unique. 

It is easy to determine $j_q^X$ if $j_p^X$ is already known:
\begin{lemma}
\label{lem:dualityPtoQ}

\begin{enumerate}
    \item     For $p,q > 1$ and $x \in X$ it holds $j^X_q(x) = \|x \|^{q-p} j_p^X(x)$.

    \item For a scalar $a$, $x \in X$ and $s \geq 1$ we have $ j_s^X(ax) = a^{s-1} j_s^X(x)$.
\end{enumerate}
\end{lemma}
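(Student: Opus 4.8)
The plan is to prove both parts directly from Asplund's theorem (Theorem~\ref{thm:asplund}), which identifies the duality map with the subdifferential of $\frac{1}{q}\|\cdot\|^q$, together with the chain rule for subdifferentials of the form $\varphi\circ\|\cdot\|$ where $\varphi$ is a smooth increasing convex function on $[0,\infty)$. Throughout I work under the standing assumption that the norm is smooth, so each $j_q^X$ is the single-valued Gateaux gradient of $\frac{1}{q}\|\cdot\|^q$.

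For part~(1), I would first dispose of the case $x=0$, where both sides vanish. For $x\neq 0$, the function $t\mapsto \frac{1}{q}t^q$ is differentiable on $(0,\infty)$, so the chain rule gives
\begin{align*}
j_q^X(x) \;=\; \nabla\!\left(\tfrac{1}{q}\|\cdot\|^q\right)(x) \;=\; \|x\|^{q-1}\,\nabla\|\cdot\|(x),
\end{align*}
and similarly $j_p^X(x) = \|x\|^{p-1}\,\nabla\|\cdot\|(x)$, since $\nabla\|\cdot\|(x)$ is the common gradient of the norm at $x$ (the duality map with respect to $h\mapsto h^0$, i.e. the unique norm-one functional attaining $\|x\|$). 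Solving the second identity for $\nabla\|\cdot\|(x)$ and substituting into the first yields $j_q^X(x) = \|x\|^{q-1}\|x\|^{-(p-1)} j_p^X(x) = \|x\|^{q-p} j_p^X(x)$, as claimed. Alternatively, and perhaps more cleanly for the writeup, one can argue purely from Definition~\ref{def:dualityMapping}: take any $x^*\in J_p^X(x)$, set $y^* \coloneqq \|x\|^{q-p} x^*$, and verify directly that $\|y^*\| = \|x\|^{q-p}\|x\|^{p-1} = \|x\|^{q-1}$ and $\langle x, y^*\rangle = \|x\|^{q-p}\langle x,x^*\rangle = \|x\|^{q-p}\|x\|\,\|x^*\| = \|y^*\|\,\|x\|$, so $y^*\in J_q^X(x)$; uniqueness (from smoothness) then gives equality of the selections. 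This avoids invoking differentiability of the norm and works even in the set-valued formulation.

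For part~(2), the cases $a=0$ or $x=0$ are immediate, so assume $a\neq 0$, $x\neq 0$. Again working from Definition~\ref{def:dualityMapping} is cleanest: let $x^*\in J_s^X(x)$ and put $y^* \coloneqq |a|^{s-1}\sign(a)\,x^* = a^{s-1}x^*$ when $a>0$; the point is to track the sign carefully since the statement writes $a^{s-1}$. One computes $\|y^*\| = |a|^{s-1}\|x^*\| = |a|^{s-1}\|x\|^{s-1} = \|ax\|^{s-1}$ and $\langle ax, y^*\rangle = a\cdot a^{s-1}\langle x,x^*\rangle = a^s\|x\|\,\|x^*\|$; matching this against $\|ax\|\,\|y^*\| = |a|^s\|x\|\,\|x^*\|$ shows the choice $a^{s-1}x^*$ is correct precisely when $a>0$, and for $a<0$ the scalar must be $|a|^{s-1}\sign(a) = -|a|^{s-1}$, which equals $a^{s-1}$ only for integer odd $s-1$. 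Here I expect the main (minor) obstacle: the formula $j_s^X(ax)=a^{s-1}j_s^X(x)$ as literally written is only unambiguous for $a>0$ (or one reads $a^{s-1}$ as $|a|^{s-1}\sign a$), so I would state it for $a>0$, or equivalently derive the positive-homogeneity degree $s-1$ of $j_s^X$ and remark on the sign convention for negative scalars. Given that, the verification is a one-line computation and uniqueness under the smoothness assumption closes the argument. One can also deduce part~(2) from part~(1) combined with the known $1$-homogeneity of $\nabla\|\cdot\|$, but the direct check is shorter.
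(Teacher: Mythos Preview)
Your proposal is correct and follows essentially the same route as the paper: part~1 is delegated to \cite{cioranescu1990geometry} (your direct verification from Definition~\ref{def:dualityMapping} is the standard argument there), and part~2 is proved in the paper exactly as you outline, by checking that $a^{s-1}j_s^X(x)$ satisfies the defining conditions of $J_s^X(ax)$ and invoking uniqueness. Your caution about the sign of $a$ is well-placed---the paper's own computation silently assumes $a>0$ when it writes $\|ax\|=a\|x\|$ and $\|ax\|^{s-1}=a^{s-1}\|x\|^{s-1}$.
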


\begin{proof}
The proof of 1. can be found in~\cite[Proposition 4.7.(f), Chapter 1]{cioranescu1990geometry}. From the definition of the duality map~\eqref{eq:def_dualityMap} we get the equation
\begin{align*}
     \langle ax, j_s^X(ax) \rangle = \| a x \|_X  \| j_s^X(ax) \|_{X^*} = a \| x \| \| a x \|^{s-1} = a^s \| x \|^s.
\end{align*}
Since we also have
\begin{align*}
    \langle ax, a^{s-1} j_s^X(x) \rangle = a \| x \| a^{s-1} \| x \|^{s-1} = a^s \| x \|^s,
\end{align*}
by the definition of the duality map we get the equality $a^{s-1} j_s^X(x) = j_s^X(ax)$ since $x$ and $a$ were arbitrary.
\end{proof}

The duality mapping for $L^r(\Omega)$ spaces is single-valued and given by
\begin{align}
\label{eq:dualityLp}
    \langle  j_p(f),g \rangle = \int_\Omega \left(\| f \|_{L^r(\Omega)}^{p-r} |f(x)|^{r-1} \sign(f(x))\right) g(x) \dx, 
\end{align}
 for $f,g \in L^r{(\Omega)}$. A proof for the normalized duality mapping can be found in~\cite[Section 3.2]{chidume2008geometric}. The general duality mapping then follows with Lemma~\ref{lem:dualityPtoQ}.

Now, we can compute the duality mapping of $L^p(0,T;X)$ with respect to the duality mapping of the space $X$.

\begin{theorem}{(Duality mapping of $L^p(0,T;X)$)}
\label{thm:dualityLp(0,T;X)}
    Let $X$ be a normed space and $q \geq 1$. Let further either $X$ be reflexive or $X^*$ be separable. Then the duality mapping $j_q^{L^p(0,T;X)} \in L^{p*}(0,T;X^*)$ is given by
    \begin{align*}
        \left(j_q^{L^p(0,T;X)}\LBfunc \right)(t) = \lVert  \LBfunc \rVert_{L^p(0,T;X)}^{q-p} j^X_p \LBfunc (t)
    \end{align*}
    for $\LBfunc  \in L^p(0,T;X)$, $t \in [0,T]$.
\end{theorem}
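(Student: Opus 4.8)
The plan is to compute the duality mapping via Asplund's theorem (Theorem~\ref{thm:asplund}) by evaluating the subdifferential of $\frac{1}{q}\|\cdot\|^q$ on $L^p(0,T;X)$, and then identify the subgradient explicitly. By Lemma~\ref{lem:dualityPtoQ}.1 it suffices to handle the case $q=p$ (since $j_q^{L^p(0,T;X)}\LBfunc = \|\LBfunc\|_{L^p(0,T;X)}^{q-p} j_p^{L^p(0,T;X)}\LBfunc$), so I would first reduce to showing $(j_p^{L^p(0,T;X)}\LBfunc)(t) = j_p^X\LBfunc(t)$. The natural candidate is therefore the function $u^* : t \mapsto j_p^X\LBfunc(t)$, and the main work is to verify (i) that $u^* \in L^{p*}(0,T;X^*)$, using Theorem~\ref{th:dualityBochner} to identify this with $(L^p(0,T;X))^*$, and (ii) that $u^*$ satisfies the two defining conditions of the duality map in~\eqref{eq:def_dualityMap}, namely $\langle \LBfunc, u^*\rangle = \|\LBfunc\|\,\|u^*\|$ and $\|u^*\| = \|\LBfunc\|^{p-1}$.

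For the measurability/integrability in step (i), I would argue that $t \mapsto j_p^X\LBfunc(t)$ is strongly measurable (it is the composition of the strongly measurable $\LBfunc$ with the duality selection $j_p^X$, which is continuous on the smooth space $X$ by the assumptions in force), and then compute $\|j_p^X\LBfunc(t)\|_{X^*} = \|\LBfunc(t)\|_X^{p-1}$ from the definition of $j_p^X$; raising to the power $p^* = p/(p-1)$ and integrating gives $\int_0^T \|j_p^X\LBfunc(t)\|_{X^*}^{p^*}\dt = \int_0^T \|\LBfunc(t)\|_X^{p}\dt < \infty$, so indeed $u^* \in L^{p*}(0,T;X^*)$ with $\|u^*\|_{L^{p*}(0,T;X^*)} = \|\LBfunc\|_{L^p(0,T;X)}^{p-1}$. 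This simultaneously establishes the norm condition $\|u^*\| = \|\LBfunc\|^{p-1}$. For the pairing condition, under the duality identification the action of $u^*$ on $\LBfunc$ is $\langle \LBfunc, u^*\rangle = \int_0^T \langle \LBfunc(t), j_p^X\LBfunc(t)\rangle_{X}\dt = \int_0^T \|\LBfunc(t)\|_X\,\|j_p^X\LBfunc(t)\|_{X^*}\dt = \int_0^T \|\LBfunc(t)\|_X^{p}\dt$, which equals $\|\LBfunc\|_{L^p(0,T;X)}^{p} = \|\LBfunc\|_{L^p(0,T;X)}\cdot\|\LBfunc\|_{L^p(0,T;X)}^{p-1} = \|\LBfunc\|_{L^p(0,T;X)}\,\|u^*\|_{L^{p*}(0,T;X^*)}$, as required. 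Hence $u^* = j_p^{L^p(0,T;X)}\LBfunc$, and the general $q$ case follows by Lemma~\ref{lem:dualityPtoQ}.1.

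I expect the main obstacle to be the rigorous justification that the pointwise-defined function $t \mapsto j_p^X\LBfunc(t)$ genuinely represents an element of $(L^p(0,T;X))^*$ in the sense of Theorem~\ref{th:dualityBochner} — i.e. the interplay between strong measurability of the composition and the duality pairing formula $\langle \LBfunc, u^*\rangle = \int_0^T \langle\LBfunc(t), u^*(t)\rangle\dt$. One should note that continuity of the single-valued selection $j_p^X$ on all of $X$ holds when the norm of $X$ is (uniformly) smooth, which is the standing assumption ("we will only consider smooth norms"); on reflexive $X$ this duality selection is norm-to-norm continuous, so the composition is strongly measurable by the Pettis measurability theorem. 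An alternative, cleaner route avoiding pointwise selections would be to apply Asplund's theorem directly: write $\frac{1}{p}\|\LBfunc\|_{L^p(0,T;X)}^{p} = \int_0^T \frac{1}{p}\|\LBfunc(t)\|_X^p\dt$ and differentiate under the integral (interchanging Gateaux differentiation and integration, justified by dominated convergence using the power-type bound on difference quotients), which directly yields the subgradient $t\mapsto \partial(\tfrac1p\|\cdot\|_X^p)(\LBfunc(t)) = j_p^X\LBfunc(t)$; I would present whichever of these two arguments is shorter, likely the direct verification of~\eqref{eq:def_dualityMap} since it sidesteps differentiation-under-the-integral technicalities.
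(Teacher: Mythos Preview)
Your proposal is correct, but your primary route differs from the paper's. The paper applies Asplund's theorem directly for general $q$, using the chain rule to differentiate $\frac{1}{q}\bigl(\int_0^T \|\LBfunc(t)\|_X^p\,dt\bigr)^{q/p}$ and obtaining the formula in one short computation, then invoking Theorem~\ref{th:dualityBochner} to identify the result with an element of $L^{p^*}(0,T;X^*)$. You instead first reduce to $q=p$ via Lemma~\ref{lem:dualityPtoQ} and then verify by hand that the candidate $u^*(t)=j_p^X\LBfunc(t)$ satisfies the two defining conditions~\eqref{eq:def_dualityMap} of the duality map. Your approach is more elementary and is more explicit about the genuine technical point (strong measurability of $t\mapsto j_p^X\LBfunc(t)$ and the validity of the pairing formula under the identification of Theorem~\ref{th:dualityBochner}), whereas the paper's chain-rule computation is shorter but leaves the differentiation under the integral sign implicit. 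Amusingly, the ``alternative, cleaner route'' you sketch at the end is essentially the argument the paper actually gives.
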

\begin{proof}
    The duality map $j_q^{L^p(0,T;X)}$ is an element of the dual space $[ L^p(0,T;X) ]^*$, i.e., it is a functional $L^p(0,T;X) \to \R$. Let now $\psi \in L^p(0,T;X)$. With Asplund's theorem~\ref{thm:asplund} and the chain rule we get
    \begin{align*}
        \left( j_q^{L^p(0,T;X)}\LBfunc  \right) (\psi) 
        &= \partial \left( \frac{1}{q} \lVert \LBfunc  \rVert_{L^p(0,T;X)}^q \right)(\psi)
        =  \partial \left(  \frac{1}{q} \left( \int_0^T \lVert \LBfunc (t) \rVert^p_X \dt\right)^{\frac{q}{p}} \right) (\psi) \\
        &= \lVert \LBfunc  \rVert_{L^p(0,T;X)}^{q-p}   \int_0^T \langle \partial  \frac{1}{p} \lVert \LBfunc (t) \rVert^p_X, \psi(t) \rangle_{X^*,X} \dt  
        = \lVert \LBfunc  \rVert_{L^p(0,T;X)}^{q-p} \int_0^T \langle j_p^X \LBfunc (t), \psi(t) \rangle_{X^*,X} \dt.
    \end{align*}
    Since $L^{p^*}(0,T;X^*)$ can be identified with the dual space $[L^p(0,T;X) ]^*$, see Theorem~\ref{th:dualityBochner}, we can also express the duality mapping by $ j_q^{L^p(0,T;X)}\LBfunc  = \lVert \LBfunc  \rVert_{L^p(0,T;X)}^{q-p} j^X_p \LBfunc (\cdot).$
\end{proof}

\subsection{Convexity and smoothness of Banach spaces}
\label{sec:convexitySmoothness}

We want to show that Lebesgue-Bochner spaces are smooth of power-type since this is a prerequisite to use Tikhonov regularization in these spaces. First, we introduce the concepts of convexity and smoothness in Banach spaces and derive a consequence of the Xu-Roach inequalities for $s$-smooth spaces.

\begin{definition}{(Convexity)}
\label{def:convexity}
    For a Banach space $X$ we define the \textbf{modulus of convexity} $\delta_X : [0,2] \to [0,1]$ as $ \delta_X(\varepsilon) \coloneqq \inf \left\{ 1 - \| \frac{1}{2} (x + y) \| : \| x \| = \| y \| = 1, \| x - y \| \geq \varepsilon \right\}.$

    The space $X$ is called
    \begin{itemize}
        \item \textbf{uniformly convex} if $\delta_X(\varepsilon) > 0$ for all $\varepsilon$ with $0 < \varepsilon \leq 2$.
        \item \textbf{convex of power-type \textit{s}} or \textbf{\textit{s}-convex} if there exists a constant $c^X_s > 0$ such that for all $x,y \in X$ 
        \begin{align*}
            \| x - y \|^s \geq \| x \|^s - s\langle j_s^X(x), y \rangle + c_s \| y \|^s.
        \end{align*}
    \end{itemize}
\end{definition}

\begin{definition}{(Smoothness)}
\label{def:smoothness}
    For a Banach space $X$ we define the \textbf{modulus of smoothness} $\rho_X(\varepsilon) : [0, \infty) \to [0, \infty)$ as $\rho_X(\varepsilon) \coloneqq \frac{1}{2} \sup \bigl\{ 
        \lVert x + y \rVert + \lVert x - y \rVert - 2 : \lVert x \rVert = 1, \lVert y \rVert \leq \varepsilon
        \bigr\}.$
    The space $X$ is called
    \begin{itemize}
        \item \textbf{smooth} if for every $x \in X \setminus \{0\}$ there exists a unique $x^* \in X^*$ with $\lVert x^* \rVert = 1$ and $\langle x^*, x \rangle = \lVert x \rVert$.
        \item \textbf{uniformly smooth} if $\lim_{\varepsilon \to 0} \frac{\rho_X(\varepsilon)}{\varepsilon} = 0$.
        \item \textbf{smooth of power-type \textit{s}} or \textbf{\textit{s}-smooth} if there exists a constant $G^X_s > 0$ such that for all $x,y \in X$ 
        \begin{align}
        \label{eq:defiSmoothnessPowerTypeInequality}
            \lVert x - y \rVert^s \leq \lVert x \rVert^s - s\langle j_s^X(x), y \rangle + G^X_s \lVert y \rVert^s.
        \end{align}
    \end{itemize}
\end{definition}

\begin{remark}
    The constants $c^X_s$ and $G^X_s$ from Definition~\ref{def:convexity} and~\ref{def:smoothness} cannot depend on $x,y \in X$. In the following, this assumption will hold for all appearing constants.
\end{remark}

 Due to statement 4 in the following Lemma~\ref{lem:smoothness}, smoothness and convexity can be seen as dual concepts. Therefore it is easy to see that each property of a smooth Banach space has an equivalent in a convex Banach space. Here, we focus only on smooth Banach spaces since we are interested in computing the constant $G^X_s$ from the definition of smoothness of power-type in Section~\ref{sec:smoothnessLebesgue(Bochner)}.
 Next, we summarize a few properties of smooth Banach spaces.
\begin{lemma} 
 \label{lem:smoothness}
 In a Banach space $X$ the following statements hold:
\begin{enumerate}
    \item If $X$ is $q$-smooth, it is also uniformly smooth and $q \leq 2$.
    \item If $X$ is uniformly smooth, $X$ is reflexive and smooth.
    \item $X$ is smooth if and only if every duality mapping $J_p^X$ is single-valued.
    \item $X$ is $s$-smooth/ uniformly smooth if and only if $X^*$ is $s^*$-convex/ uniformly convex.
    \item If and only if $X$ is $s$-smooth, it holds $\rho_X(\varepsilon) \leq C \varepsilon^s$ for a constant $C$.
\end{enumerate}
    
\end{lemma}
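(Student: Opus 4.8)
\textbf{Proof plan for Lemma~\ref{lem:smoothness}.}

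The plan is to treat the five statements essentially as a curated list of classical facts from the geometry of Banach spaces, assembling them from standard references (Lindenstrauss--Tzafriri, Cioranescu, Schuster et al.~\cite{RegularizationBanachSpaces}, Chidume~\cite{chidume2008geometric}) rather than proving everything from scratch; the work lies in tracing which implications are genuinely needed and which constants/exponents must be matched. I would organize the proof so that the less standard equivalences (items 4 and 5) are reduced to the defining inequalities \eqref{eq:defiSmoothnessPowerTypeInequality} and its convex counterpart, and the softer facts (items 1--3) are cited.

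For item 1, I would argue that $s$-smoothness forces $\rho_X(\varepsilon) \le C\varepsilon^s$ (this is the easy direction of item 5, obtained by choosing $x$ on the unit sphere and $y$ with $\lVert y\rVert \le \varepsilon$ in \eqref{eq:defiSmoothnessPowerTypeInequality}, adding the inequality to the one with $y$ replaced by $-y$, and using that the linear terms cancel); then $\rho_X(\varepsilon)/\varepsilon \to 0$ gives uniform smoothness. The bound $s \le 2$ follows because $\rho_X(\varepsilon) \ge \sqrt{1+\varepsilon^2}-1 \sim \tfrac12\varepsilon^2$ holds in \emph{every} Banach space (a parallelogram-type lower bound), so an upper bound $C\varepsilon^s$ is only possible for $s\le 2$. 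For item 2, uniform smoothness of $X$ implies uniform convexity of $X^*$ (Day/Milman--Pettis circle of ideas), hence $X^*$ is reflexive, hence $X$ is reflexive; smoothness is immediate from uniform smoothness since $\rho_X(\varepsilon)/\varepsilon\to 0$ in particular gives Gateaux-differentiability of the norm. Item 3 is the standard characterization that the norm is Gateaux-differentiable at every nonzero point iff the supporting functional is unique there iff $J_p^X$ is single-valued; I would simply cite this (e.g.~\cite{cioranescu1990geometry}), noting the selection $j_p^X$ is then well defined, consistent with the remark preceding the lemma.

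The two items carrying the real content are 4 and 5, and I expect item 4 to be the main obstacle because the duality between $s$-smoothness of $X$ and $s^*$-convexity of $X^*$ is the place where the exponents and the constants $G_s^X \leftrightarrow c_{s^*}^{X^*}$ must be handled carefully. The approach: use Asplund's theorem~\ref{thm:asplund} to identify $j_s^X = \partial(\tfrac1s\lVert\cdot\rVert^s)$, recall that the Fenchel conjugate of $\tfrac1s\lVert\cdot\rVert_X^s$ is $\tfrac1{s^*}\lVert\cdot\rVert_{X^*}^{s^*}$, and then translate the smoothness inequality \eqref{eq:defiSmoothnessPowerTypeInequality} — which says $\tfrac1s\lVert\cdot\rVert^s$ has a ``$G_s^X$-Lipschitz-type'' upper second-order behaviour — into a uniform strong-convexity estimate for the conjugate functional on $X^*$, which is exactly $s^*$-convexity of $X^*$; the reverse implication is symmetric using reflexivity. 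For the uniform (non-power-type) version the same duality is classical (Lindenstrauss). Item 5's nontrivial direction ($\rho_X(\varepsilon)\le C\varepsilon^s \Rightarrow X$ is $s$-smooth) I would obtain from the Xu--Roach-type inequalities — precisely the circle of estimates the paper develops around Lemma~\ref{lem:XuRoachCorollaryPsmooth} — or cite it directly; combined with the easy direction noted under item 1 this gives the equivalence. Throughout I would emphasize that only reflexive $X$ are ultimately used, so invoking reflexivity (from item 2) to get $X^{**}=X$ is harmless and streamlines the duality arguments in item 4.
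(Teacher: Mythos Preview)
Your proposal is correct and far more detailed than what the paper actually does: the paper's entire proof of this lemma is the single sentence ``The statements and proofs can be found in~\cite[Chapter 2.3.1]{RegularizationBanachSpaces} and the therein cited references.'' In other words, the paper treats Lemma~\ref{lem:smoothness} as a pure citation of known results and offers no argument at all.

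What you do differently is supply genuine proof sketches for each item --- the $\rho_X(\varepsilon)\ge \sqrt{1+\varepsilon^2}-1$ lower bound to force $s\le 2$, the Fenchel-conjugate duality between $\tfrac{1}{s}\lVert\cdot\rVert^s$ and $\tfrac{1}{s^*}\lVert\cdot\rVert^{s^*}$ to transfer smoothness of $X$ to convexity of $X^*$, and the invocation of the Xu--Roach machinery for the hard direction of item~5. This buys self-containedness and makes the logical dependencies explicit (in particular that item~5's nontrivial direction really rests on the Xu--Roach circle that the paper develops \emph{after} the lemma, so citing an external source here is not circular). The paper's approach buys economy: since none of these facts are new and all are available in~\cite{RegularizationBanachSpaces}, a bare citation is entirely adequate for a lemma that serves only as background. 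Your sketches are sound; just be aware that for this particular statement the paper expects nothing beyond a reference.
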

The statements and proofs can be found in~\cite[Chapter 2.3.1]{RegularizationBanachSpaces} and the therein cited references.

\begin{remark}
\label{rem:modulusSmoothnessLimitBehaviour}
    In 5. of Lemma~\ref{lem:smoothness} it suffices to consider the behavior close to $0$ since we can bound the modulus of smoothness by
    \begin{align*}
        \rho_X(\varepsilon) &= \frac{1}{2} \sup \bigl\{ 
        \lVert x + y \rVert + \lVert x - y \rVert - 2 : \lVert x \rVert = 1, \lVert y \rVert \leq \varepsilon
        \bigr\} 
        \leq \frac{1}{2} \sup \bigl\{ 
        2\lVert x  \| + 2\| y \rVert - 2 : \lVert x \rVert = 1, \lVert y \rVert \leq \varepsilon
        \bigr\} 
        \leq \varepsilon \leq \varepsilon^q
    \end{align*}
    for $\varepsilon \geq 1$ and $q > 1$. If we have $\rho_X(\varepsilon) \leq C \varepsilon^q$ for $\varepsilon < d < 1$, we can also find a constant $C$ for $d \leq \varepsilon < 1$ since $\rho$ is continuous and non-decreasing, see e.g.~\cite[1.e]{lindenstraussClassicalBanachspaces}.
\end{remark}

\subsubsection{Xu-Roach inequalities}
\label{sec:Xu-Roach}

Xu and Roach proved the following inequalities for uniformly smooth Banach spaces in 1991~\cite{XuRoach}.

\begin{lemma}{(Xu-Roach inequality)}
\label{lem:Xu-RoachSmooth}
    For any $1 < p < \infty$ and a Banach space $X$, the following statements are equivalent:
    \begin{enumerate}
        \item $X$ is uniformly smooth with modulus of smoothness $\rho_X$.
        \item $J_p^X$ is single-valued and it holds
        \begin{align}
            \label{eq:Xu-RoachUniformSmoothDualityMapping}
            \lVert j_p^X(x) - j_p^X(y) \rVert_{X^*} \leq
            G'_p\max(\lVert x \rVert_X , \lVert y \rVert_X )^{p} \lVert x - y \rVert^{-1}_X \rho_X\left( \frac{\lVert x - y \rVert_X}{\max(\lVert x \rVert_X, \lVert y \rVert_X )} \right) \quad \forall x,y \in X.
        \end{align}
        \item  \vspace{-3ex}
        \begin{align}
        \label{eq:Xu-RoachUniformSmoothNorm}
        \text{For all $x,y \in X$ it holds } \quad \lVert x - y \rVert^p \leq \lVert x \rVert^p - p\langle j_p^X(x),y \rangle + \tilde{\sigma}_p(x,y)
    \end{align}
    \begin{align}
    \label{eq:Xu-RoachSigma}
    \text{with }
\quad        \tilde{\sigma}_p(x,y) = p G'_p \int_0^1 \frac{\max( \lVert x - hy \rVert, \lVert x \rVert )^p}{h} \rho_X\left( \frac{h \lVert y \rVert}{\max( \lVert x - hy \rVert, \lVert x \rVert)} \right) \dH.
    \end{align}
    \end{enumerate}
\end{lemma}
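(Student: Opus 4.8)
The plan is to follow Xu and Roach's original argument~\cite{XuRoach} and establish the cycle of implications $(1)\Rightarrow(2)\Rightarrow(3)\Rightarrow(1)$, exploiting the integral representation of the duality map coming from Asplund's theorem. The starting point is the identity
\[
    \frac{1}{p}\lVert x - y \rVert^p - \frac{1}{p}\lVert x \rVert^p = -\int_0^1 \langle j_p^X(x - hy), y \rangle \dH,
\]
which follows from the fundamental theorem of calculus applied to $h \mapsto \tfrac{1}{p}\lVert x - hy\rVert^p$, whose derivative is $-\langle j_p^X(x-hy), y\rangle$ by Asplund's theorem~\ref{thm:asplund} (the norm is Gateaux-differentiable because, under $(1)$, $X$ is uniformly smooth, hence smooth, hence $j_p^X$ is single-valued by Lemma~\ref{lem:smoothness}.3). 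Adding and subtracting $\langle j_p^X(x), y\rangle$ inside the integral gives
\[
    \lVert x - y \rVert^p = \lVert x \rVert^p - p\langle j_p^X(x), y\rangle + p\int_0^1 \langle j_p^X(x) - j_p^X(x-hy), y \rangle \dH,
\]
so the whole game reduces to estimating the last integral — i.e.\ to a modulus-of-continuity bound on $j_p^X$, which is exactly what $(2)$ provides.

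For $(1)\Rightarrow(2)$, the goal is the Lipschitz-type estimate~\eqref{eq:Xu-RoachUniformSmoothDualityMapping}. I would first reduce to the normalized case $p=2$ using Lemma~\ref{lem:dualityPtoQ}.1 to pass between $j_p^X$ and $j_2^X$, then invoke the standard relation between uniform smoothness of $X$ and the behaviour of the normalized duality map: for $\lVert x\rVert = \lVert y \rVert = 1$ one has $\langle j_2^X(x) - j_2^X(y), x - y\rangle \le 4\rho_X(\lVert x-y\rVert)$, and since $j_2^X(x)$ is the supporting functional at $x$, a short computation bounds $\lVert j_2^X(x) - j_2^X(y)\rVert$ in terms of $\rho_X(\lVert x - y\rVert)/\lVert x - y\rVert$. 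Homogenizing via Lemma~\ref{lem:dualityPtoQ}.2 and rescaling by $\max(\lVert x\rVert, \lVert y\rVert)$ then yields the stated form with a universal constant $G'_p$. For $(2)\Rightarrow(3)$, I substitute the bound~\eqref{eq:Xu-RoachUniformSmoothDualityMapping} (applied to the pair $x$ and $x-hy$, whose difference is $hy$) into the integral identity above: by Cauchy--Schwarz, $\langle j_p^X(x) - j_p^X(x-hy), y\rangle \le \lVert j_p^X(x) - j_p^X(x-hy)\rVert \cdot \lVert y\rVert$, and plugging in~\eqref{eq:Xu-RoachUniformSmoothDualityMapping} produces precisely the integrand
\[
    G'_p \frac{\max(\lVert x - hy\rVert, \lVert x\rVert)^p}{h}\,\rho_X\!\left(\frac{h\lVert y\rVert}{\max(\lVert x-hy\rVert, \lVert x\rVert)}\right),
\]
so that after multiplying by $p$ we recover $\tilde\sigma_p(x,y)$ as in~\eqref{eq:Xu-RoachSigma}. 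One must check the integral is finite: near $h=0$ the factor $1/h$ is compensated because $\rho_X(\varepsilon) = O(\varepsilon)$ as $\varepsilon \to 0$ by uniform smoothness (indeed $\rho_X(\varepsilon)/\varepsilon \to 0$), so the integrand stays bounded.

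Finally, for $(3)\Rightarrow(1)$ I would argue contrapositively or directly: the inequality~\eqref{eq:Xu-RoachUniformSmoothNorm} with the explicit remainder $\tilde\sigma_p$ forces, after dividing by $\lVert y\rVert$ and letting $\lVert y\rVert \to 0$, that the one-sided directional derivatives of the norm behave linearly, giving smoothness; symmetrizing in $\pm y$ and using the definition of $\rho_X$ together with the scaling structure of $\tilde\sigma_p$ shows $\rho_X(\varepsilon)/\varepsilon \to 0$, i.e.\ uniform smoothness. The main obstacle, in my estimation, is not any single implication but the careful bookkeeping of the homogeneity and the $\max(\cdot,\cdot)$-normalization in~\eqref{eq:Xu-RoachUniformSmoothDualityMapping}–\eqref{eq:Xu-RoachSigma}: one has to track how the constant $G'_p$ depends on $p$ and verify that the rescaling arguments (via Lemma~\ref{lem:dualityPtoQ}) are uniform in $x,y$, and one must be attentive to the degenerate cases $y = 0$ and $x = hy$ where the normalizing denominators vanish. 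For the present paper's purposes, however, all of this can legitimately be cited from~\cite{XuRoach}; the reason to reproduce the integral identity explicitly is that the subsequent Lemma~\ref{lem:XuRoachCorollaryPsmooth} will need the concrete form of $\tilde\sigma_p$ to extract the $s$-smoothness constant $G^X_s$.
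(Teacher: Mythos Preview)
Your proposal is correct and aligned with the paper's treatment: the paper does not give its own proof of this lemma but simply cites~\cite{XuRoach} for both the argument and the constant $G'_p$, and your sketch follows precisely that original Xu--Roach line (integral representation via Asplund, then the cycle $(1)\Rightarrow(2)\Rightarrow(3)\Rightarrow(1)$). The only quibble is terminological---the estimate $\langle j_p^X(x) - j_p^X(x-hy), y\rangle \le \lVert j_p^X(x) - j_p^X(x-hy)\rVert \, \lVert y\rVert$ is the dual-pairing inequality rather than Cauchy--Schwarz---but this does not affect the argument.
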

    The proof and a possible choice for $G'_p$ is given in~\cite{XuRoach}.


To extend the Xu-Roach inequalities to $s$-smooth spaces, we need the following results of Sprung~\cite{2019Sprung} which allow another characterization of $s$-smoothness of power-type:

\begin{lemma}
\label{lem:SprungPowerType}
    A Banach space $X$ is $s$-smooth iff
    \begin{align}
    \label{eq:SprungCharacterization}
        \frac{1}{p} \| x -y \|^p - \frac{1}{p} \| x \|^p + \langle j_p^X(x),y \rangle \leq C \| y \|^s
    \end{align}
    for $1 < p < \infty$, a constant $C > 0$, $x,y \in X$ with $\|x \|=1$ and $\| y \| \leq d$ for $d > 0$. 
\end{lemma}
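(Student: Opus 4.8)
The plan is to reformulate the left-hand side of \eqref{eq:SprungCharacterization} as a Bregman-type quantity and then prove the two implications separately, in each case passing through the modulus of smoothness $\rho_X$. Writing $f = \tfrac1p\|\cdot\|^p$, Asplund's theorem (Theorem~\ref{thm:asplund}) together with the standing assumption that the norm is smooth identifies $\nabla f$ with the single-valued duality map $j_p^X$, so \eqref{eq:SprungCharacterization} asks precisely that the Bregman distance $f(x-y) - f(x) + \langle j_p^X(x), y\rangle$ be bounded by $C\|y\|^s$ uniformly for $x$ on the unit sphere and $\|y\|\le d$.

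For the implication from $s$-smoothness to \eqref{eq:SprungCharacterization}, I would first note that an $s$-smooth space is uniformly smooth (Lemma~\ref{lem:smoothness}, item~1) and smooth, so $j_p^X$ is single-valued and the Xu--Roach inequality \eqref{eq:Xu-RoachUniformSmoothNorm}--\eqref{eq:Xu-RoachSigma} is available for the given $p$; dividing it by $p$ already bounds the left-hand side of \eqref{eq:SprungCharacterization} by $\tfrac1p\tilde\sigma_p(x,y)$. The task then reduces to estimating the remainder $\tilde\sigma_p(x,y)$. Here I would use that $s$-smoothness yields $\rho_X(\tau)\le C_\rho\tau^s$ (Lemma~\ref{lem:smoothness}, item~5, together with Remark~\ref{rem:modulusSmoothnessLimitBehaviour}); since $\max(\|x-hy\|,\|x\|)\ge\|x\|=1$, the argument of $\rho_X$ inside $\tilde\sigma_p$ never exceeds $h\|y\|\le d$, so this power bound applies throughout the integral. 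Substituting it makes the factor $1/h$ cancel against $h^s$, leaving $\tilde\sigma_p(x,y)\lesssim \|y\|^s\int_0^1 h^{s-1}\max(\|x-hy\|,1)^{p-s}\dH$, and finally $1\le\max(\|x-hy\|,1)\le 1+d$ bounds the $\max$-factor by $(1+d)^{|p-s|}$, independently of $x,y$; the remaining integral equals $1/s$, which gives \eqref{eq:SprungCharacterization} with an explicit $C$.

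For the converse I would symmetrize: applying the hypothesis once with $y$ and once with $-y$ and adding cancels the linear term $\langle j_p^X(x),y\rangle$ and gives $\|x+y\|^p + \|x-y\|^p \le 2 + 2pC\|y\|^s$ whenever $\|x\|=1$ and $\|y\|\le d$. Convexity of $t\mapsto t^p$ followed by the elementary bound $(1+t)^{1/p}\le 1 + t/p$ converts this into $\|x+y\| + \|x-y\| - 2\le 2C\|y\|^s$; taking the supremum over $\|y\|\le\varepsilon$ for $\varepsilon\le d$ gives $\rho_X(\varepsilon)\le C\varepsilon^s$ near $0$, and Remark~\ref{rem:modulusSmoothnessLimitBehaviour} together with Lemma~\ref{lem:smoothness}, item~5, then yields $s$-smoothness of $X$.

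The genuinely delicate point is the estimate of $\tilde\sigma_p$ in the first implication — specifically, keeping the constant $C$ independent of $x$ and $y$. This is exactly where the normalization $\|x\|=1$ and the cutoff $\|y\|\le d$ are essential: without them the factor $\max(\|x-hy\|,\|x\|)^{p-s}$ degenerates (it blows up as $\|x\|\to\infty$ when $p>s$ and as $\|x\|\to 0$ when $p<s$), and the argument of $\rho_X$ may leave the regime where $\rho_X(\tau)\le C_\rho\tau^s$ is usable. Everything else — the cancellation in $\tilde\sigma_p$, the convexity plus Bernoulli step, and the two-way passage between $\rho_X(\varepsilon)\le C\varepsilon^s$ and $s$-smoothness — is routine given the results already recorded.
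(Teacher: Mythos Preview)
Your argument is correct. The paper, however, does not give a self-contained proof of Lemma~\ref{lem:SprungPowerType}: it simply records the statement as a direct consequence of two results of Sprung (Lemma~\ref{lem:Sprung2} for the direction \eqref{eq:SprungCharacterization}$\Rightarrow$$s$-smooth, and the subsequent lemma for the converse), and later, in the proof of Lemma~\ref{lem:XuRoachCorollaryPsmooth}, carries out exactly your Xu--Roach estimate of $\tilde\sigma_p$ under the normalization $\|x\|=1$, $\|y\|\le d$ as the step $4.\Rightarrow 1.$ So your forward implication coincides with what the paper eventually does, just placed earlier. Your converse is genuinely different and more elementary: the paper (via Sprung) passes through the bound $\rho_X(\varepsilon)\le p^{1/p-1}\varepsilon^s + C\varepsilon^2$ of Lemma~\ref{lem:Sprung2}, whereas your symmetrization-plus-convexity step yields $\rho_X(\varepsilon)\le C\varepsilon^s$ directly without any auxiliary machinery. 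The payoff of your route is a fully self-contained proof that avoids importing Sprung's results; the payoff of the paper's route is that it also makes available the sharper intermediate estimate on $\rho_X$ with explicit dependence on $p$.
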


    This characterization extends Definition~\ref{def:smoothness} where $s$-smoothness was defined by $\lVert x - y \rVert^s \leq \lVert x \rVert^s - s\langle j_s^X(x), y \rangle + G^X_s \lVert y \rVert^s.$
    It is a direct consequence of 
    Theorem 4.1(2) and Corollary 4.5 from~\cite{2019Sprung}:

\begin{lemma}[Sprung, 2019]
\label{lem:Sprung2}
    Let $X$ be a Banach space and $1 < p < \infty$. If, for $d > 0$, $\varepsilon \leq d$ and all $x \in S_X$, we have
    \begin{align}
        \label{eq:SprungVoraussetzungII}
        \sup_{\| y \| = \varepsilon } \left | \frac{1}{p} \| x -y \|^p - \frac{1}{p} \| x \|^p + \langle j_p^X(x),y \rangle  \right | \leq \varepsilon^s,
    \end{align}
    we obtain the estimate $\rho_X(\varepsilon) \leq p^{1/p - 1} \varepsilon^s + C \varepsilon^2$
    for the modulus of smoothness $\rho_X(\varepsilon)$ for $\varepsilon \leq d$, i.e., $X$ is $s$-smooth according to Remark~\ref{rem:modulusSmoothnessLimitBehaviour}.
\end{lemma}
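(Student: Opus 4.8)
\emph{Proof idea.}
The plan is to feed the hypothesis~\eqref{eq:SprungVoraussetzungII} straight into the definition of the modulus of smoothness. Fix $x\in S_X$ and $y\neq 0$ with $\|y\|\le\varepsilon\le d$. Since $\|y\|\le d$, I may apply~\eqref{eq:SprungVoraussetzungII} at scale $\|y\|$, first to the vector $y$ and then to the vector $-y$ (both of norm $\|y\|$); using $\|x\|=1$ and that $j_p^X(x)$ is a single well-defined functional independent of $y$ (the paper works under the standing assumption of smooth norms), this yields the two one-sided bounds
\[
\|x-y\|^p\le 1-p\langle j_p^X(x),y\rangle+p\|y\|^s,\qquad \|x+y\|^p\le 1+p\langle j_p^X(x),y\rangle+p\|y\|^s.
\]
Taking $p$-th roots and using the Bernoulli-type inequality $(1+t)^{1/p}\le 1+\frac{t}{p}$, which holds for all $t>-1$ by concavity of $t\mapsto(1+t)^{1/p}$, I would obtain $\|x\pm y\|\le 1\pm\langle j_p^X(x),y\rangle+\|y\|^s$; adding the two estimates, the pairing terms cancel and $\|x+y\|+\|x-y\|-2\le 2\|y\|^s\le 2\varepsilon^s$. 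Taking the supremum over $\|x\|=1$ and $\|y\|\le\varepsilon$ then gives $\rho_X(\varepsilon)\le\varepsilon^s$ for $\varepsilon\le d$.

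This already has the required order $\varepsilon^s$, which is all that matters for the conclusion; the slightly different shape $p^{1/p-1}\varepsilon^s+C\varepsilon^2$ in Sprung's statement is what one gets by keeping the second-order Taylor remainder of $(1+t)^{1/p}$ (and by a mildly cruder route to the $p$-th-power bound), but this refinement changes nothing essential. To finish, Remark~\ref{rem:modulusSmoothnessLimitBehaviour} extends the near-zero bound to $\rho_X(\varepsilon)\le C'\varepsilon^s$ for all $\varepsilon>0$, and statement~5 of Lemma~\ref{lem:smoothness} then gives that $X$ is $s$-smooth.

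The step that needs genuine attention is the passage through the $p$-th root: the bracket $1+p\langle j_p^X(x),y\rangle+p\|y\|^s$ can dip below $1$, so I must ensure it stays above $-1$ for the Bernoulli inequality to apply. Since $|\langle j_p^X(x),y\rangle|\le\|x\|^{p-1}\|y\|=\|y\|\le\varepsilon$, this is guaranteed as soon as $\varepsilon<1/p$; I would therefore replace $d$ by $\min(d,\tfrac{1}{2p})$ from the start, which costs nothing because Remark~\ref{rem:modulusSmoothnessLimitBehaviour} recovers the estimate for all $\varepsilon$ from its validity near $0$. I do not expect any other real obstacle: the $p$-th roots, the cancellation of the duality pairing, and the two invocations at the end are all routine.
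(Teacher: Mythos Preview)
The paper does not actually prove this lemma: it is stated as a result of Sprung and referenced to Theorem~4.1(2) and Corollary~4.5 of~\cite{2019Sprung}, with no argument given in the paper itself. So there is no ``paper's own proof'' to compare against; I can only assess your argument on its merits.

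Your argument is correct and in fact slightly sharper than the stated conclusion. Applying the hypothesis at scale $\|y\|$ to $y$ and to $-y$, passing to $p$-th roots via concavity, and adding so that the duality pairing cancels gives $\rho_X(\varepsilon)\le \varepsilon^s$ directly for $\varepsilon\le d$, without the extra $C\varepsilon^2$ term. This is enough for the $s$-smoothness conclusion via Remark~\ref{rem:modulusSmoothnessLimitBehaviour} and Lemma~\ref{lem:smoothness}(5).

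One minor simplification: your caution about restricting to $\varepsilon<1/(2p)$ to keep the bracket above $-1$ is unnecessary. From the hypothesis you have $\|x\mp y\|^p\le 1+t_\pm$, and since the left-hand side is nonnegative the right-hand side is automatically $\ge 0$; the concavity inequality $(1+t)^{1/p}\le 1+t/p$ is valid on all of $[-1,\infty)$, so no further restriction on $\varepsilon$ is needed. Apart from this, the proof sketch is complete.
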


\begin{lemma}[Sprung, 2019]
    If the space $X$ is $s$-smooth, then there exist constants $C_d > 0$ for all $d > 0$, such that $ \frac{1}{p} \| x -y \|^p - \frac{1}{p} \| x \|^p + \langle j_p^X(x),y \rangle  
       \leq C_d \| y \|^s$
    holds for all $x,y \in X$ with $\| x \| = 1$ and $\| y \| \leq d$.
\end{lemma}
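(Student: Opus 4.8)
The plan is to derive the stated Bregman-type estimate from the Xu--Roach inequality of Lemma~\ref{lem:Xu-RoachSmooth}. This inequality is available here because an $s$-smooth space is uniformly smooth (statement 1 of Lemma~\ref{lem:smoothness}) and hence smooth, so that the duality map $j_p^X$ is single-valued for every $1 < p < \infty$ (statements 2 and 3 of Lemma~\ref{lem:smoothness}); thus \eqref{eq:Xu-RoachUniformSmoothNorm}--\eqref{eq:Xu-RoachSigma} applies. Dividing \eqref{eq:Xu-RoachUniformSmoothNorm} by $p$ gives
\[
\tfrac{1}{p}\lVert x - y\rVert^p - \tfrac{1}{p}\lVert x\rVert^p + \langle j_p^X(x), y\rangle \leq \tfrac{1}{p}\,\tilde{\sigma}_p(x,y),
\]
so it suffices to bound $\tilde{\sigma}_p(x,y)$ by a constant multiple of $\lVert y\rVert^s$ whenever $\lVert x\rVert = 1$ and $\lVert y\rVert \leq d$.

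The next step is to record that $s$-smoothness yields $\rho_X(\varepsilon) \leq C\varepsilon^s$ for \emph{all} $\varepsilon \geq 0$: near the origin this is statement 5 of Lemma~\ref{lem:smoothness}, for $\varepsilon \geq 1$ one has $\rho_X(\varepsilon) \leq \varepsilon \leq \varepsilon^s$ (since $s \geq 1$) by the computation in Remark~\ref{rem:modulusSmoothnessLimitBehaviour}, and monotonicity and continuity of $\rho_X$ cover the intermediate range. Substituting this bound into the integrand of \eqref{eq:Xu-RoachSigma} makes the two occurrences of $\max(\lVert x - hy\rVert, \lVert x\rVert)$ combine into a single factor $\max(\lVert x - hy\rVert, \lVert x\rVert)^{p-s}$ and pulls out $\lVert y\rVert^s$, leaving
\[
\tilde{\sigma}_p(x,y) \leq p\,G'_p\,C\,\lVert y\rVert^s \int_0^1 h^{s-1}\,\max(\lVert x - hy\rVert, \lVert x\rVert)^{p-s}\dH .
\]

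It then remains to bound $\max(\lVert x - hy\rVert, \lVert x\rVert)^{p-s}$ by a constant depending only on $d$ (and the fixed exponents $p,s$). Using $\lVert x\rVert = 1$ and $\lVert x - hy\rVert \leq 1 + h\lVert y\rVert \leq 1 + d$ for $h \in [0,1]$, one gets $1 \leq \max(\lVert x - hy\rVert, \lVert x\rVert) \leq 1 + d$, so $\max(\lVert x - hy\rVert, \lVert x\rVert)^{p-s} \leq \max\{1, (1+d)^{p-s}\} =: M_d$ irrespective of the sign of $p - s$. Since $\int_0^1 h^{s-1}\dH = 1/s$, this gives $\tilde{\sigma}_p(x,y) \leq (p\,G'_p\,C\,M_d/s)\lVert y\rVert^s$, and dividing by $p$ yields the claim with $C_d := G'_p\,C\,M_d/s$.

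The only genuinely delicate point is the case analysis for the exponent $p - s$: one must use the upper bound $1+d$ when $p \geq s$ and the lower bound $1$ (available precisely because $\lVert x\rVert = 1$) when $p < s$, and one must be careful to extend $\rho_X(\varepsilon) \leq C\varepsilon^s$ beyond a neighbourhood of the origin, which is exactly what Remark~\ref{rem:modulusSmoothnessLimitBehaviour} supplies. An alternative route giving the same constants is to write the left-hand side as $\int_0^1 \langle j_p^X(x) - j_p^X(x - hy), y\rangle\dH$ (using Gâteaux differentiability of $\tfrac1p\lVert\cdot\rVert^p$ with gradient $j_p^X$) and estimate each integrand by $\lVert y\rVert$ times the duality-map form \eqref{eq:Xu-RoachUniformSmoothDualityMapping} of the Xu--Roach inequality; after inserting $\rho_X(\varepsilon) \leq C\varepsilon^s$ the computation is identical.
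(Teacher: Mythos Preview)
Your proof is correct. The paper itself does not give a self-contained proof of this lemma---it is quoted from Sprung (2019) as Corollary~4.5---but the computation you carry out is precisely the one the paper performs in Step~1 ($1.\Rightarrow 5.$) and the closing lines of Step~2 ($4.\Rightarrow 1.$) of the proof of Lemma~\ref{lem:XuRoachCorollaryPsmooth}: insert $\rho_X(\varepsilon)\leq C\varepsilon^s$ into $\tilde\sigma_p$, reduce to $\int_0^1 h^{s-1}\max(\lVert x-hy\rVert,1)^{p-s}\dH$, and then bound the max by $\max\{(1+d)^{p-s},1\}$ using $\lVert x\rVert=1$, $\lVert y\rVert\leq d$. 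So your route and the paper's (implicit) route coincide.
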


Lemma~\ref{lem:SprungPowerType} allows us to prove the equivalence of several statements for $s$-smooth Banach spaces as a consequence of the Xu-Roach inequalities as formulated in Lemma \ref{lem:XuRoachCorollaryPsmooth}. This statement 
has already been published in the literature without proof, see, e.g.,~\cite{RegularizationBanachSpaces, KazimierskiSmoothnessBesov}. It was stated that the respective proof 
was already given by Xu and Roach in their original paper~\cite{XuRoach}. However, \cite{XuRoach} only contains a part of the proof of this consequence. In particular, the direction ($4. \Rightarrow 1. $) for $p \neq s$ is missing, which we prove using the characterization of power-type from Lemma~\ref{lem:SprungPowerType}. 
\begin{lemma}
\label{lem:XuRoachCorollaryPsmooth}
    Let $X$ be uniformly smooth. Then the following are equivalent for generic constants $C > 0$:
    \begin{enumerate}
        \item $X$ is $s$-smooth.
        \item The duality mapping $J_p^X$ is single-valued for some $p$ with $1 < p < \infty$ and for all $x,y \in X$ it holds
        \begin{align}
            \label{eq:XuRoachq-smoothDualityMap}
        \lVert j_p^X(x) - j_p^X(y) \rVert_{X^*}
        \leq C \max( \lVert x \rVert_X, \lVert y \rVert_X)^{p-s} \lVert x -y \rVert^{s-1}_X.
        \end{align}
        \item Statement (2.) holds for all $p$ with $1 < p < \infty$.
        \item For some $p$ with $1 < p < \infty$ 
        we have
        \begin{align*}
            \lVert x - y \rVert^p \leq \lVert x \rVert^p - p\langle j_p^X(x),y \rangle + \tilde{\sigma}_p(x,y)
        \end{align*}
        for all $x,y \in X$. Furthermore, we can bound $\tilde{\sigma}_p$ from~\eqref{eq:Xu-RoachSigma} by $ \tilde{\sigma}_p(x,y) \leq C p \int_0^1 h^{s-1} \max( \lVert x - hy \rVert , \lVert x \rVert)^{p-s} \lVert y \rVert^s \dH.$
        \item Statement (4.) holds for all $p$ with $1 < p < \infty$. 
    \end{enumerate}
\end{lemma}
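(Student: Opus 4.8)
\emph{Proof plan.}\quad Since $X$ is assumed uniformly smooth, Lemma~\ref{lem:Xu-RoachSmooth} is available for \emph{every} exponent $p\in(1,\infty)$: every duality mapping $J_p^X$ is single-valued, and both Xu--Roach inequalities---the duality estimate \eqref{eq:Xu-RoachUniformSmoothDualityMapping} and the norm inequality \eqref{eq:Xu-RoachUniformSmoothNorm}--\eqref{eq:Xu-RoachSigma}---hold. Hence the single-valuedness appearing in (2.) and (3.) is automatic and only the size estimates matter. I would pivot everything on the scalar condition
\begin{align*}
  (\star)\qquad \rho_X(\varepsilon)\le C\varepsilon^s\quad\text{for all }\varepsilon\in[0,2],
\end{align*}
which, by statement~5 of Lemma~\ref{lem:smoothness} combined with Remark~\ref{rem:modulusSmoothnessLimitBehaviour} (for $\varepsilon\ge1$ one has $\rho_X(\varepsilon)\le\varepsilon\le\varepsilon^s$ since $s\ge1$), is equivalent to ``$X$ is $s$-smooth''. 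The strategy is then to close the two cycles $(1.)\Rightarrow(3.)\Rightarrow(2.)\Rightarrow(1.)$ and $(1.)\Rightarrow(5.)\Rightarrow(4.)\Rightarrow(1.)$; the steps $(3.)\Rightarrow(2.)$ and $(5.)\Rightarrow(4.)$ are trivial, since a statement holding for all $p$ holds in particular for some $p$.

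First, the implications out of (1.): I would simply substitute $(\star)$ into the Xu--Roach formulas. For $(1.)\Rightarrow(3.)$ the argument of $\rho_X$ in \eqref{eq:Xu-RoachUniformSmoothDualityMapping} never exceeds $2$, so $(\star)$ turns it into $\lVert j_p^X(x)-j_p^X(y)\rVert_{X^*}\le C\max(\lVert x\rVert,\lVert y\rVert)^{p-s}\lVert x-y\rVert^{s-1}$, i.e.\ \eqref{eq:XuRoachq-smoothDualityMap}, for every $p$. For $(1.)\Rightarrow(5.)$, inserting $(\star)$ into \eqref{eq:Xu-RoachSigma} directly gives $\tilde{\sigma}_p(x,y)\le Cp\int_0^1 h^{s-1}\max(\lVert x-hy\rVert,\lVert x\rVert)^{p-s}\lVert y\rVert^s\dH$ for every $p$, which is the bound in (4.). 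For $(2.)\Rightarrow(1.)$ I would use the identity---valid because $\frac1p\lVert\cdot\rVert^p$ is Gateaux-differentiable with gradient $j_p^X$ by Asplund's Theorem~\ref{thm:asplund}---
\begin{align*}
  \frac1p\lVert x-y\rVert^p-\frac1p\lVert x\rVert^p+\langle j_p^X(x),y\rangle=\int_0^1\langle j_p^X(x)-j_p^X(x-hy),\,y\rangle\dH,
\end{align*}
estimate the integrand via $\lVert j_p^X(x)-j_p^X(x-hy)\rVert_{X^*}\lVert y\rVert$ and \eqref{eq:XuRoachq-smoothDualityMap} applied to the pair $(x,x-hy)$ (difference norm $h\lVert y\rVert$), and then restrict to $\lVert x\rVert=1$, $\lVert y\rVert\le d$ with fixed $d<1$, so that $\max(\lVert x-hy\rVert,\lVert x\rVert)\in[1-d,1+d]$ and the factor $\max(\cdot)^{p-s}$ is bounded; integrating $h^{s-1}$ leaves $\le C\lVert y\rVert^s$, whence $X$ is $s$-smooth by Lemma~\ref{lem:SprungPowerType}.

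The genuinely new point---and the step I expect to be the main obstacle---is $(4.)\Rightarrow(1.)$ for $p\ne s$, which is the case omitted in \cite{XuRoach}. The difficulty is the exponent mismatch: one cannot directly compare the Xu--Roach inequality \eqref{eq:Xu-RoachUniformSmoothNorm} (exponent $p$) with the defining inequality \eqref{eq:defiSmoothnessPowerTypeInequality} of $s$-smoothness (exponent $s$), and this is exactly what Sprung's characterization Lemma~\ref{lem:SprungPowerType} absorbs, since it permits an arbitrary $p$. I would rearrange \eqref{eq:Xu-RoachUniformSmoothNorm} and insert the bound on $\tilde{\sigma}_p$ from (4.) to obtain
\begin{align*}
  \frac1p\lVert x-y\rVert^p-\frac1p\lVert x\rVert^p+\langle j_p^X(x),y\rangle\le\frac1p\tilde{\sigma}_p(x,y)\le C\lVert y\rVert^s\int_0^1 h^{s-1}\max(\lVert x-hy\rVert,\lVert x\rVert)^{p-s}\dH,
\end{align*}
then specialize to $\lVert x\rVert=1$, $\lVert y\rVert\le d<1$: for all $h\in[0,1]$ one has $\max(\lVert x-hy\rVert,\lVert x\rVert)\in[1-d,1+d]$, so $\max(\cdot)^{p-s}\le M(p,s,d)$ and the left-hand side is $\le\frac{CM}{s}\lVert y\rVert^s$; Lemma~\ref{lem:SprungPowerType} then yields that $X$ is $s$-smooth. (When $p=s$ the $\max$-factor equals $1$ and the step is immediate, which is why only $p\ne s$ needs a new argument.) Together, $(1.)\Rightarrow(3.)\Rightarrow(2.)\Rightarrow(1.)$ and $(1.)\Rightarrow(5.)\Rightarrow(4.)\Rightarrow(1.)$ establish the equivalence of all five statements. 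Beyond invoking Lemma~\ref{lem:SprungPowerType}, the only remaining work is the routine bookkeeping of trapping $\max(\lVert x-hy\rVert,\lVert x\rVert)^{p-s}$ on the normalized range and integrating $h^{s-1}$.
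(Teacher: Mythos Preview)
Your proposal is correct and follows essentially the same route as the paper: both directions out of (1.) are obtained by inserting $\rho_X(\varepsilon)\le C\varepsilon^s$ into the Xu--Roach formulas \eqref{eq:Xu-RoachUniformSmoothDualityMapping} and \eqref{eq:Xu-RoachSigma}, and the return to (1.) for general $p$ goes through Sprung's characterization (Lemma~\ref{lem:SprungPowerType}) after normalizing $\lVert x\rVert=1$, $\lVert y\rVert\le d$ and bounding the $\max(\cdot)^{p-s}$ factor. The only organizational differences are that the paper arranges the implications as $1\Leftrightarrow5$ together with the cycle $1\Rightarrow3\Rightarrow2\Rightarrow4\Rightarrow1$ (so your step $2\Rightarrow1$ appears there as $2\Rightarrow4$ followed by $4\Rightarrow1$, using the same integral identity you write down), and the paper obtains $5\Rightarrow1$ by the shortcut of specializing to $p=s$, which collapses the integral to $\int_0^1 h^{s-1}\dH=1/s$ and recovers \eqref{eq:defiSmoothnessPowerTypeInequality} directly without invoking Sprung.
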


\begin{remark}
\label{rem:ConstantForP=S}
        If we know the constant $C$ in inequality~\eqref{eq:XuRoachq-smoothDualityMap} for $p=s$, we can choose $G^X_s = C$ in the definition of power-type~\eqref{eq:defiSmoothnessPowerTypeInequality}.
\end{remark}

\begin{proof}[Proof of Lemma~\ref{lem:XuRoachCorollaryPsmooth}]
\textbf{Step 1:} We start by showing the equivalence $1. \Leftrightarrow 5.$

        If $X$ is $s$-smooth, it is also uniformly smooth and each duality mapping is single-valued. By combining the Xu-Roach inequality~\eqref{eq:Xu-RoachUniformSmoothNorm} with the property 5. from Lemma~\ref{lem:smoothness}, we obtain
    \begin{align*}
        \lVert x - y \rVert^{p} &\leq 
        \lVert x \rVert^{p} - p \langle j_{p}^{X}(x) , y \rangle +
        p C \int_0^1 \frac{\max(\lVert x - hy \rVert, \lVert x \rVert)^{p}}{h} \frac{h^{s} \lVert y \rVert^{s}}{\max(\lVert x - hy \rVert, \lVert x \rVert)^{s}} \dH \\
        &= \lVert x \rVert^{p} - p \langle j_{p}^{X}(x) , y \rangle +
        p C \lVert y \rVert^{s} \int_0^1 h^{s - 1} \max(\lVert x - hy \rVert, \lVert x \rVert)^{p-s}\dt 
    \end{align*}
    for any $p$ with $1 < p < \infty$ which shows that inequality~\eqref{eq:defiSmoothnessPowerTypeInequality} holds.

    If we assume $5.$, we get $1.$ by choosing $p = s$ which yields
    \begin{align*}
        \lVert x - y \rVert^{s} &\leq 
        \lVert x \rVert^{s} - s \langle j_{s}^{X}(x) , y \rangle +
        s C \int_0^1 \frac{\max(\lVert x - hy \rVert, \lVert x \rVert)^{s}}{h} \frac{h^{s} \lVert y \rVert^{s}}{\max(\lVert x - hy \rVert, \lVert x \rVert)^{s}} \dH \\
        &= \lVert x \rVert^{s} - s \langle j_{s}^{X}(x) , y \rangle +
        s C \lVert y \rVert^{s} \int_0^1 h^{s - 1} \dH 
        = \lVert x \rVert^{s} - s \langle j_{s}^{X}(x) , y \rangle +
        C \lVert y \rVert^{s},
    \end{align*}
    and $X$ is $s$-smooth by definition.

 \textbf{Step 2:}   Next, we show the equivalence of $1,3,2$ and $4$.

        The implication $1. \Rightarrow 3.$ follows analogously to the implication $1. \Rightarrow 5.$ by applying inequality~\eqref{eq:Xu-RoachUniformSmoothDualityMapping} and 5. from Lemma~\ref{lem:smoothness}.
        The implication $3. \Rightarrow 2.$ is obvious.

    The implication $2. \Rightarrow 4.$ can be shown analogously as in the proof of Lemma~\ref{lem:Xu-RoachSmooth}~\cite{XuRoach}:
Since $X$ is uniformly smooth, $j_p^X$ is continuous from the norm topology of $X$ into the weak-$*$ topology of $X^*$~\cite{diestel2006geometryBanach}. Then the function $\Phi(h) = \| x  - hy \|^p$ with $1 < p < \infty$ is continuously differentiable and has the derivative $\Phi'(h) = -p \langle j^X_p(x - hy), y \rangle$. Then we get
\vspace{-2ex}
\begin{align*}
    \| x - y \|^p - \| x \|^p + p \langle j_p^X(x), y \rangle 
    &= \Phi(1) - \Phi(0) - \Phi'(0) 
    = \int_0^1 \Phi'(h) \dH - \Phi'(0) 
    = p \int_0^1 \langle -j_p^X(x - hy) + j_p^X(x), y \rangle \dH \\
    &\leq p    \int_0^1 \| j_p^X(x - hy) - j_p^X(x) \| \| y\| \dH 
    \leq p   C  \int_0^1 \max(\| x - hy \|, \|x \|)^{p-s} \| hy \|^{s-1} \| y\| \dH \\
    &= p   C  \int_0^1 h^{s-1} \max(\| x - hy \|, \|x \|)^{p-s} \|y \|^{s} \dH 
\end{align*}
    It remains to show the implication $4. \Rightarrow 1.$ This is trivial if $p = s$. Let us now assume $p \neq s$.
    We then have 
        \begin{align*}
            \frac{1}{p}\lVert x - y \rVert^p- \frac{1}{p}\lVert x \rVert^p + \langle j_p^X(x),y \rangle  
            &\leq   C \int_0^1 h^{s-1} \max( \lVert x - hy \rVert , 1)^{p-s} \lVert y \rVert^s \dH \\
            &\leq  C \int_0^1 h^{s-1}  (1 + hd)^{p-s} \| y \|^s \dH 
            \leq C \max((1 + d)^{p-s}, 1) \frac{1}{s} \| y \|^s ,
        \end{align*}
        for $\|x \| = 1$, and $\|y\| \leq d$ for $d > 0$, which shows the characterization of $s$-smoothness from Lemma~\ref{lem:SprungPowerType}.
\end{proof}

\subsubsection{A specification of smoothness of power-type for Banach spaces}
\label{sec:constantG_p}


Lemma~\ref{lem:XuRoachCorollaryPsmooth} characterizes $s$-smoothness in different ways. However, we only obtain the constant $G^X_s$ from the definition of power-type~\ref{def:smoothness} from the equivalent inequality~\eqref{eq:XuRoachq-smoothDualityMap} for $p=s$, see Remark~\ref{rem:ConstantForP=S}.

In Section~\ref{sec:smoothnessLebesgue(Bochner)} we show that Lebesgue-Bochner spaces are smooth of power-type. To determine the corresponding constant $G^X_s$, we need to determine $G^X_s$ also for $p \neq s$ in~\eqref{eq:XuRoachq-smoothDualityMap} which we will derive in this Section.

\begin{theorem}
\label{lem:GqFromGp}
    If the Banach space $X$ is $s$-smooth with constant $G^X_s$ as in Definition \ref{def:smoothness}, it is also $q$-smooth for all $1 < q < s$. 
    For the $q$-smoothness we can choose the constant $G^X_q$ as $G^X_q = 2^{s-q} \max(2^s, G^X_s + K 2^{s-2})$ where $K$ is the Lipschitz constant of the function $f: [\frac{1}{2} , 1] \to \R$ with $h \mapsto h^{q-s}$.
\end{theorem}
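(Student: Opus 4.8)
The plan is to deduce the $q$-smoothness inequality from a Hölder-type estimate for the $q$-duality map. Concretely, I would first reduce everything to the claim
\begin{align}
\label{eq:dualHoelderStep}
\lVert j_q^X(x) - j_q^X(y) \rVert_{X^*} \leq G_q^X \lVert x - y \rVert_X^{q-1} \qquad \text{for all } x,y \in X.
\end{align}
Given \eqref{eq:dualHoelderStep}, inequality \eqref{eq:defiSmoothnessPowerTypeInequality} for the exponent $q$ follows exactly as in the implication $2.\Rightarrow 4.\Rightarrow 1.$ in the proof of Lemma~\ref{lem:XuRoachCorollaryPsmooth}, specialised to $p=q$: since $X$ is $s$-smooth it is uniformly smooth (Lemma~\ref{lem:smoothness}), so $j_q^X$ is norm-to-weak-$*$ continuous, the map $\Phi(h)=\lVert x-hy\rVert^q$ is continuously differentiable with $\Phi'(h)=-q\langle j_q^X(x-hy),y\rangle$, and hence
\begin{align*}
\lVert x - y \rVert^q - \lVert x \rVert^q + q \langle j_q^X(x), y \rangle
= q \int_0^1 \langle j_q^X(x) - j_q^X(x - hy), y \rangle \dH
\leq q \int_0^1 G_q^X \lVert hy \rVert^{q-1} \lVert y \rVert \dH
= G_q^X \lVert y \rVert^q .
\end{align*}

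To prove \eqref{eq:dualHoelderStep}, I would use that both sides are positively homogeneous of degree $q-1$ in $(x,y)$ (Lemma~\ref{lem:dualityPtoQ}.2) and that the estimate is symmetric in $x$ and $y$, so that one may assume $\max(\lVert x \rVert, \lVert y \rVert)=1$, say $\lVert x \rVert=1\geq\lVert y \rVert$ with $x\neq y$, and set $t:=\lVert x-y\rVert\in(0,2]$, which gives $\lVert y\rVert\geq 1-t$. Writing $j_q^X(w)=\lVert w\rVert^{q-s}j_s^X(w)$ (Lemma~\ref{lem:dualityPtoQ}.1) and splitting yields
\begin{align*}
\lVert j_q^X(x) - j_q^X(y) \rVert \leq \lVert x \rVert^{q-s} \lVert j_s^X(x) - j_s^X(y) \rVert + \bigl| \lVert x \rVert^{q-s} - \lVert y \rVert^{q-s} \bigr| \, \lVert j_s^X(y) \rVert ,
\end{align*}
where $\lVert x \rVert^{q-s}=1$, $\lVert j_s^X(y) \rVert=\lVert y \rVert^{s-1}\leq 1$, and $\lVert j_s^X(x)-j_s^X(y)\rVert\leq G_s^X t^{s-1}$ by Lemma~\ref{lem:XuRoachCorollaryPsmooth} with $p=s$ together with Remark~\ref{rem:ConstantForP=S}.

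It then remains to estimate $\bigl|\lVert x\rVert^{q-s}-\lVert y\rVert^{q-s}\bigr|$ and to collect the exponents, which I would do in two regimes. If $t\leq\tfrac12$ then $\lVert y\rVert\geq\tfrac12$, so $\lVert x\rVert,\lVert y\rVert\in[\tfrac12,1]$, and since $h\mapsto h^{q-s}$ is $K$-Lipschitz on $[\tfrac12,1]$ we get $\bigl|\lVert x\rVert^{q-s}-\lVert y\rVert^{q-s}\bigr|\leq K(1-\lVert y\rVert)\leq Kt$; combined with $t^{s-1}\leq t^{q-1}$ and $t\leq 2^{s-2}t^{q-1}$ (both valid for $t\leq\tfrac12$ and $1<q\leq s\leq 2$, the last because $s\le 2$ for any $s$-smooth space) this gives $\lVert j_q^X(x)-j_q^X(y)\rVert\leq (G_s^X+K2^{s-2})t^{q-1}$. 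If $\tfrac12<t\leq 2$ one uses the crude bound $\lVert j_q^X(x)-j_q^X(y)\rVert\leq\lVert x\rVert^{q-1}+\lVert y\rVert^{q-1}\leq 2\leq 2^q t^{q-1}$, since $t^{q-1}\geq 2^{1-q}$. Hence \eqref{eq:dualHoelderStep} holds with constant $\max(2^q,\,G_s^X+K2^{s-2})$, which is $\leq 2^{s-q}\max(2^s,\,G_s^X+K2^{s-2})=G_q^X$ because $2^{s-q}\geq 1$ and $2^{2s-q}\geq 2^q$ (as $s\geq q$); together with the reduction of the first paragraph this finishes the proof.

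The step I expect to be the main obstacle is precisely the decision to argue via the duality map rather than directly with \eqref{eq:defiSmoothnessPowerTypeInequality}. That inequality is $q$-homogeneous, so normalising $\lVert x\rVert=1$ still leaves $\lVert y\rVert$ unbounded, and in that regime no elementary estimate is sharp enough to stay below the claimed constant. Estimate \eqref{eq:dualHoelderStep}, however, is only $(q-1)$-homogeneous, so the normalisation $\max(\lVert x\rVert,\lVert y\rVert)=1$ confines $x$, $y$ and $x-y$ to a bounded set and reduces the troublesome regime to the harmless bound $\lVert j_q^X(x)-j_q^X(y)\rVert\leq 2$; everything else is routine book-keeping with the exponents $1<q\leq s\leq 2$ and with $K$, once one notes that the equivalence underlying Remark~\ref{rem:ConstantForP=S} lets the constant $G_s^X$ pass into the Hölder estimate for $j_s^X$.
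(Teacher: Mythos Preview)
Your proof is correct and follows essentially the same route as the paper: both arguments reduce to a H\"older estimate for $j_q^X$, normalise to $\max(\lVert x\rVert,\lVert y\rVert)=1$, split $j_q^X=\lVert\cdot\rVert^{q-s}j_s^X$, and handle the two regimes $\lVert x-y\rVert\le\tfrac12$ (via the Lipschitz constant $K$ of $h\mapsto h^{q-s}$ on $[\tfrac12,1]$) and $\lVert x-y\rVert>\tfrac12$ (crude bound) separately. The only organisational difference is that the paper packages the H\"older transfer into the auxiliary Lemma~\ref{lem:ThorstenKazimierski} (establishing first the mixed estimate with factor $\max(\lVert x\rVert,\lVert y\rVert)^{q-s}$) and then multiplies by $2^{s-q}$, whereas you go straight to the $(q-1)$-H\"older form; as a by-product you actually obtain the sharper constant $\max(2^q,\,G^X_s+K2^{s-2})$ before weakening it to the stated $G^X_q$.
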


To prove Theorem~\ref{lem:GqFromGp} we need an auxiliary Lemma, the proof of which can be found in the appendix.

\begin{lemma}
    \label{lem:ThorstenKazimierski}
    Let $s,q \in (1,\infty)$ and $X$ be a smooth Banach space. Then the following statements are equivalent:
    \begin{enumerate}[label=(\Alph*)]
            \item The duality mapping is Hölder continuous, i.e., $\exists L > 0 \quad\forall x,y \in X: \quad \| j_s(x) - j_s(y) \|_{X^*} \leq L \| x - y \|_X^{s-1}.$
        \item For all $x,y \in X$ exists $D>0$ such that $\| j_q(x) - j_q(y) \|_{X^*} \leq D \max(\|x\|_X,\|y\|_X)^{q-s} \| x -y \|_X^{s-1}.$
    \end{enumerate}
    If $L$ is given, $D$ can be chosen as $D = \max(2^s, L+ K2^{s-2})$ where $K$ is the Lipschitz constant of the function $f: [\frac{1}{2} , 1] \to \R$ with $h \mapsto h^{q-s}$. On the other hand, if $D$ is given, $L$ can be chosen as $L = \max(2^s, D + K' 2^{s-2})$ where $K'$ is the Lipschitz constant of the function $f: [\frac{1}{2} , 1] \to \R$ with $h \mapsto h^{s-q}$.
\end{lemma}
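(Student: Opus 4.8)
The plan is to exploit the two homogeneity identities for duality maps from Lemma~\ref{lem:dualityPtoQ}, namely $j_q(x) = \|x\|^{q-s} j_s(x)$ together with $\|j_s(x)\|_{X^*} = \|x\|^{s-1}$ (and symmetrically $j_s(x) = \|x\|^{s-q} j_q(x)$, $\|j_q(x)\|_{X^*} = \|x\|^{q-1}$), and to reduce both estimates to a normalized situation. First I would observe that both (A) and (B) are invariant under replacing $(x,y)$ by $(x/M, y/M)$ with $M = \max(\|x\|_X, \|y\|_X)$: a direct computation using the scaling $j_s(ax) = a^{s-1} j_s(x)$ (resp.\ $j_q(ax)=a^{q-1}j_q(x)$) from Lemma~\ref{lem:dualityPtoQ} shows that the powers of $M$ cancel exactly against the $\max(\cdot,\cdot)^{q-s}$ factor in (B). Hence it suffices to prove the equivalence when $\max(\|x\|_X,\|y\|_X)=1$, and by the symmetry of both inequalities in $x,y$ we may assume $\|x\|_X = 1 \ge \|y\|_X$. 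Under this normalization (B) reduces to $\|j_q(x)-j_q(y)\|_{X^*} \le D\|x-y\|_X^{s-1}$ and (A) stays $\|j_s(x)-j_s(y)\|_{X^*}\le L\|x-y\|_X^{s-1}$.

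For (A)$\Rightarrow$(B) I would write the telescoping decomposition
\begin{align*}
j_q(x) - j_q(y) = \|x\|^{q-s}\bigl(j_s(x)-j_s(y)\bigr) + \bigl(\|x\|^{q-s}-\|y\|^{q-s}\bigr) j_s(y),
\end{align*}
which is valid because $j_q=\|\cdot\|^{q-s}j_s$, and then split on the size of $\|x-y\|_X$. If $\|x-y\|_X \ge \tfrac12$, the crude triangle-inequality bound $\|j_q(x)-j_q(y)\|_{X^*}\le \|x\|^{q-1}+\|y\|^{q-1}\le 2 = 2^s(\tfrac12)^{s-1} \le 2^s\|x-y\|_X^{s-1}$ already delivers the constant $2^s$. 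If instead $\|x-y\|_X<\tfrac12$, then from $\|x\|_X=1$ we get $\|y\|_X\ge 1-\|x-y\|_X>\tfrac12$, so both norms lie in $[\tfrac12,1]$. The first summand is controlled by hypothesis (A) together with $\|x\|^{q-s}=1$, while for the second I would use $\|j_s(y)\|_{X^*}=\|y\|^{s-1}\le 1$ and the Lipschitz estimate $|\,\|x\|^{q-s}-\|y\|^{q-s}\,| \le K\,\|x-y\|_X$, where $K$ is the Lipschitz constant of $h\mapsto h^{q-s}$ on $[\tfrac12,1]$; converting $\|x-y\|_X \le 2^{s-2}\|x-y\|_X^{s-1}$, which holds for $\|x-y\|_X<\tfrac12$ precisely because $s\le 2$ (Lemma~\ref{lem:smoothness}), then yields $\|j_q(x)-j_q(y)\|_{X^*}\le (L+K2^{s-2})\|x-y\|_X^{s-1}$. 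Taking $D=\max(2^s, L+K2^{s-2})$ covers both cases.

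For the converse (B)$\Rightarrow$(A) I would run the identical argument with the roles of $s$ and $q$ interchanged, now starting from $j_s=\|\cdot\|^{s-q}j_q$ and the normalized form of (B). The large-distance case again gives the constant $2^s$, and the small-distance case produces the Lipschitz constant $K'$ of $h\mapsto h^{s-q}$ on $[\tfrac12,1]$, leading to $L=\max(2^s, D+K'2^{s-2})$.

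The main obstacle I anticipate is the control of the ``scalar-factor'' term $\bigl(\|x\|^{q-s}-\|y\|^{q-s}\bigr)j_s(y)$: a naive bound yields only a factor $\|x-y\|_X$, which carries the wrong exponent. The resolution is the case split at $\|x-y\|_X=\tfrac12$, which confines both norms to the compact interval $[\tfrac12,1]$ in the delicate regime so that a genuine finite Lipschitz constant for $h\mapsto h^{q-s}$ (resp.\ $h\mapsto h^{s-q}$) is available there, combined with the observation that $s\le 2$ lets one trade the excess power of $\|x-y\|_X$ for the factor $2^{s-2}$. Everything else is bookkeeping of the homogeneity identities from Lemma~\ref{lem:dualityPtoQ}.
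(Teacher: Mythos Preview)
Your proposal is correct and follows essentially the same route as the paper's proof: both arguments normalize via homogeneity to $\|x\|=1\ge\|y\|$, split at $\|x-y\|=\tfrac12$, handle the far case by the crude bound $\|j_q(x)\|+\|j_q(y)\|\le 2=2^s(\tfrac12)^{s-1}$, and treat the near case by the same telescoping $j_q(x)-j_q(y)=\|x\|^{q-s}(j_s(x)-j_s(y))+(\|x\|^{q-s}-\|y\|^{q-s})j_s(y)$ together with the Lipschitz constant of $h\mapsto h^{q-s}$ on $[\tfrac12,1]$. The paper organizes this as three separate equivalences $(A)\Leftrightarrow(a)$, $(B)\Leftrightarrow(b)$, $(a)\Leftrightarrow(b)$, while you fold the case split directly into each implication, but the computations and constants are identical. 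One small remark: your appeal to Lemma~\ref{lem:smoothness} for $s\le 2$ is slightly indirect, since the hypothesis of Lemma~\ref{lem:ThorstenKazimierski} only assumes $X$ smooth; the honest justification is that (A) (or the normalized (B)) is precisely condition~2 of Lemma~\ref{lem:XuRoachCorollaryPsmooth} with $p=s$, which forces $s$-smoothness and hence $s\le 2$. The paper leaves this step implicit as well.
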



\begin{remark}
\label{rem:lipschitzConstantsK}
    We can easily compute the Lipschitz constants $K$ and $K'$ since the function $f:[\frac{1}{2},1] \to \R$ with $h \mapsto h^a$ for $a \in \R$ is continuous and differentiable. Thus, since the derivative is bounded on $[\frac{1}{2},1]$, the Lipschitz constant $K_a$ is given by $\sup_{h \in [\frac{1}{2},1]} f'(h)$. This means
    \begin{align*}
        K_a = |a 2^{1-a}| \text{   for }  a < 1, \qquad
        K_a = a \text{   for }  a \geq 1.
    \end{align*}
\end{remark}
\begin{proof}[Proof of Theorem~\ref{lem:GqFromGp}]
    The proof without the constant can be found in~\cite[Corollary 2.43]{RegularizationBanachSpaces}.
    Since $X$ is $s$-smooth and $q > 1$ we get with 3. from Lemma~\ref{lem:XuRoachCorollaryPsmooth} the inequality
    \begin{align}
    \label{eq:ineq_proofOfGqFromGp}
                \lVert j_p^X(x) - j_p^X(y) \rVert_{X^*}
        \leq C \max( \lVert x \rVert_X, \lVert y \rVert_X)^{p-s} \lVert x -y \rVert^{s-1}_X
    \end{align}
    for arbitrary $p > 1$.
    Since we can choose $C = G^X_s = L$ for $p = s$ according to Remark~\ref{rem:ConstantForP=S}, we obtain 
    \begin{align*}
        C = \max(2^s, L + K 2^{s-2}) = \max(2^s, G^X_s + K 2^{s-2})
    \end{align*}
    for $p = q \neq s$ with Lemma~\ref{lem:ThorstenKazimierski}.
   Since $s-q > 0$, we can estimate the right-hand side of~\eqref{eq:ineq_proofOfGqFromGp} for $p = q$ by
    \begin{align*}
        \max( \lVert x \rVert, \lVert y \rVert)^{q-s} \lVert x -y \rVert^{s-1}
        &= \left(\frac{\| x -y\|}{\max( \lVert x \rVert, \lVert y \rVert)}\right)^{s-q} \| x - y\|^{q-1} \\
        &\leq \left(\frac{\| x \| + \|y\|}{\max( \lVert x \rVert, \lVert y \rVert)}\right)^{s-q}   \| x - y\|^{q-1}
        \leq 2^{s-q}  \| x - y\|^{q-1},
    \end{align*}
    Accordingly, we set 
    $G^X_q := 2^{s-q} \max(2^s, G^X_s + K 2^{s-2})$.
\end{proof}

   \subsubsection{Smoothness results for Lebesgue spaces} 
    \label{sec:smoothnessLebesgue(Bochner)}


   The Lebesgue spaces $L^r(\Omega)$ for a domain $\Omega \subset \R^n$ and $1 < r < \infty$
   are $\min(2,r)$-smooth and $\max(2,r)$-convex, see e.g.~\cite{chidume2008geometric, Hanner1956, lindenstraussClassicalBanachspaces, XuRoach}. Now, we consider $ L^{p}(\Omega)$ with a bounded domain $\Omega$ and $1 < p < \infty$. 



\begin{lemma}{(Constant $G^{L^p(\Omega)}_p$ for Lebesgue spaces)}
\label{lem:contantLebesgue}
    The constant $G^{L^p(\Omega)}_{p}$ from the definition of smoothness of power-type for the space $L^p(\Omega)$ can be chosen as
    \begin{align*}
        G^{L^p(\Omega)}_{p} = 2^{2-p}, \quad \text{ if } 1 < p \leq 2, \qquad
        G^{L^p(\Omega)}_2 = p-1 ,\quad \text{ if } p > 2
    \end{align*}
\end{lemma}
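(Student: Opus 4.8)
The plan is to treat the ranges $1<p\le 2$ and $p>2$ separately. In both I would use that, by~\eqref{eq:dualityLp} with $r=p$, the duality mapping $j_p^{L^p(\Omega)}$ is the pointwise map $f\mapsto |f|^{p-1}\sign(f)$ (the prefactor $\|f\|^{p-p}$ being $1$), whereas $j_2^{L^p(\Omega)}f=\|f\|_{L^p}^{2-p}\,|f|^{p-1}\sign(f)$ carries a norm prefactor.

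\emph{Case $1<p\le 2$.} Here $L^p(\Omega)$ is $p$-smooth (in particular uniformly smooth), so by Remark~\ref{rem:ConstantForP=S} it suffices to establish inequality~\eqref{eq:XuRoachq-smoothDualityMap} for $p=s$, i.e. the Hölder bound $\|j_pf-j_pg\|_{L^{p^*}}\le 2^{2-p}\|f-g\|_{L^p}^{p-1}$. I would deduce this from the sharp scalar inequality
\begin{align*}
   \bigl|\,|a|^{p-1}\sign(a)-|b|^{p-1}\sign(b)\,\bigr|\le 2^{2-p}\,|a-b|^{p-1},\qquad a,b\in\R,
\end{align*}
splitting into the equal-sign case (where it reduces to subadditivity of $t\mapsto t^{p-1}$ on $[0,\infty)$ together with $2^{2-p}\ge 1$) and the opposite-sign case, say $a\ge 0\ge b$, where with $u=a$, $v=-b$ it becomes $u^{p-1}+v^{p-1}\le 2^{2-p}(u+v)^{p-1}$, which is exactly concavity of $x\mapsto x^{p-1}$ at the midpoint of $u$ and $v$, with equality at $u=v$, so the constant is optimal. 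Raising to the power $p^*$, integrating over $\Omega$, taking the $p^*$-th root, and using $(p-1)p^*=p$ and $p/p^*=p-1$ then yields the Hölder bound with constant $2^{2-p}$, hence $G^{L^p(\Omega)}_p=2^{2-p}$ by Remark~\ref{rem:ConstantForP=S}.

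\emph{Case $p>2$.} Here I would argue directly. Fix $f,g\in L^p(\Omega)$; one may assume $f,g$ linearly independent (otherwise $\|f+tg\|_{L^p}$ is $|c+t|$ times a constant and the claim is an elementary one-variable computation), so that $N(t):=\int_\Omega|f+tg|^p\dx>0$ for all $t$ and $\phi(t):=N(t)^{2/p}=\|f+tg\|_{L^p}^2$ is $C^2$. Differentiating under the integral, justified on compact $t$-intervals by Hölder-type dominating functions, gives $N''(t)=p(p-1)\int_\Omega|f+tg|^{p-2}g^2\dx$, whence, discarding one term because $\frac{2}{p}-1\le 0$,
\begin{align*}
   \phi''(t)=\tfrac{2}{p}\Bigl(\tfrac{2}{p}-1\Bigr)N(t)^{\frac{2}{p}-2}N'(t)^2+\tfrac{2}{p}N(t)^{\frac{2}{p}-1}N''(t)\le 2(p-1)\,N(t)^{\frac{2}{p}-1}\int_\Omega|f+tg|^{p-2}g^2\dx.
\end{align*}
Hölder's inequality with exponents $\tfrac{p}{p-2}$ and $\tfrac{p}{2}$ bounds the last integral by $N(t)^{(p-2)/p}\|g\|_{L^p}^2$, which collapses all powers of $N(t)$ and leaves $\phi''(t)\le 2(p-1)\|g\|_{L^p}^2$. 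Then Taylor's formula $\phi(-1)=\phi(0)-\phi'(0)+\int_0^1(1-u)\phi''(-u)\du$, together with $\phi'(0)=2\langle j_2f,g\rangle$ and $\int_0^1(1-u)\du=\tfrac{1}{2}$, gives $\|f-g\|_{L^p}^2\le\|f\|_{L^p}^2-2\langle j_2f,g\rangle+(p-1)\|g\|_{L^p}^2$, i.e. $G^{L^p(\Omega)}_2=p-1$.

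The argument is elementary; the delicate points are bookkeeping. For $p>2$ one must verify the twice-differentiability of $\phi$ and justify differentiation under the integral, including the harmless degenerate case of linearly dependent $f,g$; for $1<p\le 2$ the whole content sits in the sharp scalar inequality, and it is precisely its opposite-sign branch — attaining equality at $a=-b$ — that produces the $p$-dependent constant $2^{2-p}$.
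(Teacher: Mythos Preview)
Your proposal is correct. For $1<p\le 2$ your approach is exactly the one the paper has in mind: it says ``it is simple to show~\eqref{eq:XuRoachq-smoothDualityMap} for $p=s$'', i.e.\ prove the H\"older bound on $j_p$ via the pointwise scalar inequality and integrate, which is precisely what you do (and you supply the proof of the scalar inequality, including the sharp opposite-sign case that fixes the constant $2^{2-p}$).

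For $p>2$ the paper does not argue at all but simply cites \cite[Corollary~2]{XU1991InequalitiesPsmoothness}. Your route is a direct, self-contained proof: compute $\phi''$ for $\phi(t)=\|f+tg\|_{L^p}^2$, discard the nonpositive term coming from $\tfrac{2}{p}-1\le 0$, apply H\"older with exponents $\tfrac{p}{p-2},\tfrac{p}{2}$ to get the uniform bound $\phi''\le 2(p-1)\|g\|_{L^p}^2$, and read off the inequality from the integral Taylor remainder. This is in fact the classical argument underlying the cited result, so you are not doing something genuinely different from what the literature does---you are just unpacking the citation. The advantage of your write-up is that it is self-contained and makes transparent where the constant $p-1$ comes from (the factor $p(p-1)$ in $N''$ together with the cancellation of all $N(t)$-powers after H\"older); the paper's version is shorter but opaque. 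Your bookkeeping remarks (differentiability of $\phi$, the linearly dependent case) are appropriate and do not hide any difficulty.
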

For $p > 2$ this is proven in~\cite[Corollary 2]{XU1991InequalitiesPsmoothness} and for $1 < p \leq 2$ it is 
simple to show~\eqref{eq:XuRoachq-smoothDualityMap} for $p=s$ and $G^{L^p(\Omega)}_p = 2^{2-p}$. 

\subsubsection{Smoothness results for Lebesgue-Bochner spaces}
With the theoretical results of the previous Sections~\ref{sec:Xu-Roach} and~\ref{sec:constantG_p} we can now apply the theory to Lebesgue-Bochner spaces.
Uniform convexity and smoothness of Bochner spaces have already been considered in the literature:
    
    \begin{lemma}
    Let $X$ be reflexive. Then the Lebesgue-Bochner space $L^p(0,T;X)$ is uniformly convex/smooth if and only if $X$ is uniformly convex/smooth.
\end{lemma}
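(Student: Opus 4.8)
\emph{Proof proposal.} The plan is to prove the uniform-convexity statement and deduce the smoothness statement from it by duality. Assume throughout $1<p<\infty$. Since $X$ is reflexive, so is $X^*$, and Theorem~\ref{th:dualityBochner} yields the isometric identification $\bigl(L^p(0,T;X)\bigr)^*=L^{p^*}(0,T;X^*)$ (applied twice this also shows $L^p(0,T;X)$ is reflexive). By statement~4 of Lemma~\ref{lem:smoothness} a Banach space is uniformly smooth exactly when its dual is uniformly convex. Hence $L^p(0,T;X)$ is uniformly smooth $\Leftrightarrow$ $L^{p^*}(0,T;X^*)$ is uniformly convex $\Leftrightarrow$ (applying the uniform-convexity equivalence to the reflexive space $X^*$) $X^*$ is uniformly convex $\Leftrightarrow$ $X$ is uniformly smooth. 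So it suffices to prove: $L^p(0,T;X)$ is uniformly convex if and only if $X$ is.

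For the ``only if'' direction, the map $\iota\colon X\to L^p(0,T;X)$, $\iota(x)=\mathbf 1_{[0,T]}\otimes x$, satisfies $\lVert\iota(x)\rVert_{L^p(0,T;X)}=T^{1/p}\lVert x\rVert_X$, so $\iota$ is, up to the positive constant $T^{1/p}$, an isometry onto a closed subspace. Uniform convexity is inherited by closed subspaces and is unaffected by rescaling the norm by a positive constant, so uniform convexity of $L^p(0,T;X)$ forces uniform convexity of $X$.

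The substance is the ``if'' direction, for which I would use the sequential criterion for uniform convexity: assuming $X$ uniformly convex with modulus $\delta_X$, take $\LBfunc_n,\LBdata_n\in S_{L^p(0,T;X)}$ with $\bigl\lVert\tfrac12(\LBfunc_n+\LBdata_n)\bigr\rVert\to1$ and show $\lVert\LBfunc_n-\LBdata_n\rVert\to0$. Put $a_n(t)=\lVert\LBfunc_n(t)\rVert_X$, $b_n(t)=\lVert\LBdata_n(t)\rVert_X$, so $a_n,b_n\in S_{L^p(0,T)}$. \emph{(Radial part.)} From the pointwise triangle inequality and monotonicity of the scalar $L^p$-norm, $1\ge\bigl\lVert\tfrac12(a_n+b_n)\bigr\rVert_{L^p(0,T)}\ge\bigl\lVert\tfrac12(\LBfunc_n+\LBdata_n)\bigr\rVert_{L^p(0,T;X)}\to1$, so uniform convexity of the scalar space $L^p(0,T)$ (Clarkson/Hanner inequalities, see~\cite{Hanner1956}) gives $\lVert a_n-b_n\rVert_{L^p(0,T)}\to0$. \emph{(Defect.)} Set $h_n(t)=\tfrac12(a_n(t)+b_n(t))-\bigl\lVert\tfrac12(\LBfunc_n(t)+\LBdata_n(t))\bigr\rVert_X\ge0$. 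Using the elementary inequality $u^p-v^p\ge(u-v)^p$ for $0\le v\le u$ (reduce to $u=1$ and check $1-v^p\ge(1-v)^p$ on $[0,1]$) with $u=\tfrac12(a_n+b_n)$ and $v=\bigl\lVert\tfrac12(\LBfunc_n+\LBdata_n)\bigr\rVert_X$, one gets $\int_0^T h_n^p\le\bigl\lVert\tfrac12(a_n+b_n)\bigr\rVert_{L^p}^p-\bigl\lVert\tfrac12(\LBfunc_n+\LBdata_n)\bigr\rVert_{L^p(0,T;X)}^p\to0$, hence $\lVert h_n\rVert_{L^p(0,T)}\to0$. \emph{(Angular part.)} Fix $\varepsilon>0$ and let $G_n=\{t:\lVert\LBfunc_n(t)-\LBdata_n(t)\rVert_X\ge\varepsilon\cdot\tfrac12(a_n(t)+b_n(t))>0\}$. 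On $G_n^c$ one has $\int_{G_n^c}\lVert\LBfunc_n-\LBdata_n\rVert_X^p\le\varepsilon^p\bigl\lVert\tfrac12(a_n+b_n)\bigr\rVert_{L^p}^p\le2\varepsilon^p$ for $n$ large. On $G_n$, with $m_n(t)=\max(a_n(t),b_n(t))$, the vectors $\LBfunc_n(t)/m_n(t)$ and $\LBdata_n(t)/m_n(t)$ lie in the unit ball of $X$ and differ by at least $\varepsilon/2$ in norm, so the extension of $\delta_X$ from the sphere to the unit ball gives $\bigl\lVert\tfrac12(\LBfunc_n(t)+\LBdata_n(t))\bigr\rVert_X\le(1-\delta_X(\varepsilon/2))\,m_n(t)$; subtracting this from $\tfrac12(a_n+b_n)=\tfrac12\bigl(m_n+\min(a_n,b_n)\bigr)$ and using $m_n=\tfrac12(a_n+b_n)+\tfrac12|a_n-b_n|$ yields, on $G_n$, $\tfrac12(a_n(t)+b_n(t))\le\delta_X(\varepsilon/2)^{-1}\bigl(h_n(t)+\tfrac12|a_n(t)-b_n(t)|\bigr)$. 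Hence $\int_{G_n}\bigl(\tfrac12(a_n+b_n)\bigr)^p\lesssim\delta_X(\varepsilon/2)^{-p}\bigl(\lVert h_n\rVert_{L^p}^p+\lVert a_n-b_n\rVert_{L^p}^p\bigr)\to0$, and since $\lVert\LBfunc_n(t)-\LBdata_n(t)\rVert_X\le a_n(t)+b_n(t)$ on $G_n$, also $\int_{G_n}\lVert\LBfunc_n-\LBdata_n\rVert_X^p\to0$. Adding the two contributions, $\limsup_n\lVert\LBfunc_n-\LBdata_n\rVert_{L^p(0,T;X)}^p\le2\varepsilon^p$, and letting $\varepsilon\to0$ completes the argument.

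The two genuinely nontrivial external inputs are the uniform convexity of the \emph{scalar} space $L^p(0,T)$ (a classical consequence of Clarkson's/Hanner's inequalities) and the fact that the modulus of convexity computed over the unit ball coincides, up to harmless constants, with the one over the sphere from Definition~\ref{def:convexity}; both are standard in the geometry of $L^p$-spaces and can be cited from, e.g.,~\cite{Hanner1956, lindenstraussClassicalBanachspaces, cioranescu1990geometry, AnalysisBanachSpaces}. I expect the main obstacle to be the bookkeeping in the angular step, where the ``radial'' discrepancy $a_n-b_n$, the ``defect'' $h_n$, and the modulus $\delta_X$ must be kept separate so that the bad set $G_n$ is shown to carry vanishing $L^p$-mass; the elementary inequality $u^p-v^p\ge(u-v)^p$ is what makes the defect estimate go through without any uniform-integrability considerations, and is the one step I would double-check carefully.
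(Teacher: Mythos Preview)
Your proposal is correct. The duality reduction from smoothness to convexity is exactly what the paper does (invoking item~4 of Lemma~\ref{lem:smoothness} and Theorem~\ref{th:dualityBochner}), so there the two arguments coincide.

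Where you differ is in the uniform-convexity equivalence itself. The paper does not prove it: it simply cites Day's classical 1941 result (Theorem~2 in~\cite{Day1941SomeMU}) for both directions. You instead give a self-contained proof: the ``only if'' via the constant-function embedding $x\mapsto T^{-1/p}\mathbf 1_{[0,T]}\otimes x$, and the ``if'' via a direct sequential argument that splits the problem into a radial part (handled by uniform convexity of the scalar $L^p$), a defect $h_n$ (controlled by the elementary inequality $u^p-v^p\ge(u-v)^p$), and an angular part (controlled by $\delta_X$). I checked the bookkeeping in your angular step and it is sound; in particular the chain $\delta_X(\varepsilon/2)\,m_n\le h_n+\tfrac12|a_n-b_n|$ on $G_n$ follows exactly as you indicate once one writes $\tfrac12(a_n+b_n)=m_n-\tfrac12|a_n-b_n|$. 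What your approach buys is a fully elementary, citation-free argument that exposes the mechanism (scalar $L^p$ handles the norms, $\delta_X$ handles the directions); what the paper's approach buys is brevity, since Day's theorem is precisely the required statement and is standard in the Banach-space literature.
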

\begin{proof}
    The proof for uniform convexity can be found in Theorem 2 in~\cite{Day1941SomeMU}.
    Let now $X$ be uniformly smooth. Due to 5. in Lemma~\ref{lem:smoothness}, $X^*$ is uniformly convex and so is $L^{p^*}(0,T;X^*)$. Applying Lemma~\ref{lem:smoothness} again shows that $L^{p}(0,T;X)$ is uniformly smooth. Since all implications are equalities, the statement is true also for 'if and only if'.
\end{proof}

\begin{remark}
    
    It is already known that there exists an equivalent norm in $L^q(0,T;X)$ where $X$ is $p$-smooth such that the space $L^q(0,T;X)$ is $\min(p,q)$-smooth~\cite[Section 3.7]{AnalysisBanachSpaces} (which is based on~\cite{Pisier1975MartingalesWV} and~\cite[Chapter 10]{Pisier2016MartingalesIB}). This approach involves martingales in Banach spaces. In the next Lemma we will prove that the space $L^q(0,T;X)$ is $\min(p,q)$-smooth already with the standard norm. 
\end{remark}

\begin{lemma}
\label{lem:psmoothnesLp(0,T;X)}
    Let $X$ be $p$-smooth with known constant $G_p^X$ and $1< q<\infty$. Then $L^q(0,T;X)$ is $\min(p,q)$-smooth with constant 
    \begin{alignat*}{2}
        G_{q}^{L^q(0,T;X)} &=  2^{p-q} \max(2^p, G^X_p + K 2^{p-2}), & \text{if } q = \min(p,q)\\
        G_p^{L^q(0,T;X)} &=  \max(2^q, 2^{q-p} G_p^X + K 2^{q-2}) + K' 2^{p-2}  & \text{if } p = \min(p,q),
    \end{alignat*}
    where $K$ and $K'$ are the Lipschitz constants of the functions $f: [\frac{1}{2} , 1] \to \R$ with $h \mapsto h^{q-p}$ and $g: [\frac{1}{2} , 1] \to \R$ with $h \mapsto h^{p-q}$ respectively, see Remark~\ref{rem:lipschitzConstantsK}.
\end{lemma}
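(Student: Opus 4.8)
The plan is to reduce $\min(p,q)$-smoothness of $Y:=L^q(0,T;X)$ to a Hölder estimate for one of its duality mappings. By Remark~\ref{rem:ConstantForP=S} it is enough, for $s\in\{p,q\}$, to establish an inequality $\lVert j_s^{Y}(\varphi)-j_s^{Y}(\psi)\rVert_{Y^*}\le G\lVert\varphi-\psi\rVert_Y^{s-1}$ for all $\varphi,\psi\in Y$; the resulting constant $G$ may then be used as $G_s^{Y}$. Since $X$ is $p$-smooth it is uniformly smooth and reflexive (Lemma~\ref{lem:smoothness}), so Theorem~\ref{thm:dualityLp(0,T;X)} is available; taking there the duality index equal to the outer exponent $q$ the scalar prefactor $\lVert\varphi\rVert_Y^{q-q}$ equals $1$, so $\bigl(j_q^{Y}\varphi\bigr)(t)=j_q^X\varphi(t)$, i.e.\ the $q$-duality mapping of $Y$ acts pointwise by that of $X$ with no prefactor. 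Moreover $Y$ is itself uniformly smooth, hence smooth, so its duality maps are single-valued and Lemma~\ref{lem:ThorstenKazimierski} applies on $Y$ as well. I would then treat the two cases separately.

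If $q=\min(p,q)$, i.e.\ $q\le p$, Theorem~\ref{lem:GqFromGp} shows $X$ is $q$-smooth with $G_q^X=2^{p-q}\max(2^p,G_p^X+K2^{p-2})$, where $K$ is the Lipschitz constant of $h\mapsto h^{q-p}$ on $[\tfrac12,1]$ (the case $q=p$ being immediate). By Remark~\ref{rem:ConstantForP=S} this means $\lVert j_q^X(x)-j_q^X(y)\rVert_{X^*}\le G_q^X\lVert x-y\rVert_X^{q-1}$. Using $\bigl(j_q^{Y}\varphi\bigr)(t)=j_q^X\varphi(t)$, integrating the $q^*$-th power, and using $(q-1)q^*=q$, one gets $\lVert j_q^{Y}\varphi-j_q^{Y}\psi\rVert_{Y^*}^{q^*}=\int_0^T\lVert j_q^X\varphi(t)-j_q^X\psi(t)\rVert_{X^*}^{q^*}\dt\le (G_q^X)^{q^*}\lVert\varphi-\psi\rVert_Y^{q}$; taking the $q^*$-th root gives $\lVert j_q^{Y}\varphi-j_q^{Y}\psi\rVert_{Y^*}\le G_q^X\lVert\varphi-\psi\rVert_Y^{q-1}$, so by Remark~\ref{rem:ConstantForP=S} the space $Y$ is $q$-smooth with $G_q^{Y}=G_q^X$, which is the asserted constant.

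If $p=\min(p,q)$ it suffices to treat $p<q$, since $p=q$ is covered by the previous case. As $X$ is $p$-smooth, Remark~\ref{rem:ConstantForP=S} with exponent $p$ (where the $\max$-factor equals $1$) gives $\lVert j_p^X(x)-j_p^X(y)\rVert_{X^*}\le G_p^X\lVert x-y\rVert_X^{p-1}$; inserting this into Lemma~\ref{lem:ThorstenKazimierski} with $s=p$ yields $\lVert j_q^X(x)-j_q^X(y)\rVert_{X^*}\le D\max(\lVert x\rVert_X,\lVert y\rVert_X)^{q-p}\lVert x-y\rVert_X^{p-1}$ with $D=\max(2^p,G_p^X+K2^{p-2})$, $K$ the Lipschitz constant of $h\mapsto h^{q-p}$. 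Again using $\bigl(j_q^{Y}\varphi\bigr)(t)=j_q^X\varphi(t)$, integrating the $q^*$-th power, and applying Hölder's inequality in $t$ with the conjugate exponents $u=\tfrac{q-1}{q-p}$ and $u^*=\tfrac{q-1}{p-1}$ (both $>1$ since $1<p<q$), the two factors decouple into $\int_0^T\max(\lVert\varphi(t)\rVert,\lVert\psi(t)\rVert)^{q}\dt\le 2\max(\lVert\varphi\rVert_Y,\lVert\psi\rVert_Y)^q$ and $\int_0^T\lVert\varphi(t)-\psi(t)\rVert^q\dt=\lVert\varphi-\psi\rVert_Y^q$; after taking the $q^*$-th root this produces $\lVert j_q^{Y}\varphi-j_q^{Y}\psi\rVert_{Y^*}\le D'\max(\lVert\varphi\rVert_Y,\lVert\psi\rVert_Y)^{q-p}\lVert\varphi-\psi\rVert_Y^{p-1}$ with $D'\le 2^{q-p}D=\max(2^q,2^{q-p}G_p^X+K2^{q-2})$. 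This is statement (B) of Lemma~\ref{lem:ThorstenKazimierski} for $Y$ (still with $s=p$), so the converse implication of that lemma gives $\lVert j_p^{Y}(\varphi)-j_p^{Y}(\psi)\rVert_{Y^*}\le L'\lVert\varphi-\psi\rVert_Y^{p-1}$ with $L'=\max(2^p,D'+K'2^{p-2})\le\max(2^q,D')+K'2^{p-2}$, $K'$ the Lipschitz constant of $h\mapsto h^{p-q}$; by Remark~\ref{rem:ConstantForP=S}, $Y$ is $p$-smooth with $G_p^{Y}=L'$, which after inserting $D'$ is the stated constant.

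The two integrations are routine. The one genuinely delicate point is, in the second case, lifting the pointwise $\max$-estimate on $j_q^X$ to the corresponding estimate on $j_q^{Y}$ while keeping the constant explicit: the Hölder step in time contributes a factor that has to be bounded crudely by $2^{q-p}$ to recover exactly the (non-optimal) constant claimed, and the constant must then be tracked through two applications of Lemma~\ref{lem:ThorstenKazimierski}, one on $X$ and one on $Y$. It is also worth recording explicitly that the prefactor $\lVert\varphi\rVert_Y^{q-q}=1$ in Theorem~\ref{thm:dualityLp(0,T;X)} is precisely what makes the $q$-duality mapping of $Y$ act purely pointwise, which is what allows the estimates on $X$ to be transferred to $Y$ at all.
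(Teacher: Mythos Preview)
Your proof is correct and follows essentially the same route as the paper's: in both cases you use that $\bigl(j_q^{Y}\varphi\bigr)(t)=j_q^X\varphi(t)$ from Theorem~\ref{thm:dualityLp(0,T;X)}, for $q\le p$ you first pass to $q$-smoothness of $X$ via Theorem~\ref{lem:GqFromGp} and integrate pointwise, and for $p<q$ you go through Lemma~\ref{lem:ThorstenKazimierski} on $X$, lift by H\"older in $t$ with exponents $\tfrac{q-1}{q-p}$ and $\tfrac{q-1}{p-1}$, and then apply Lemma~\ref{lem:ThorstenKazimierski} on $Y$ in the reverse direction. The paper bounds $\int_0^T\max(\lVert\varphi(t)\rVert,\lVert\psi(t)\rVert)^q\,\mathrm{d}t$ via $a^q+b^q\le(a+b)^q$ before passing to $2^{q-p}\max(\lVert\varphi\rVert_Y,\lVert\psi\rVert_Y)^{q-p}$, whereas you use $\max(a,b)^q\le a^q+b^q\le 2\max(a,b)^q$ directly; your intermediate constant $2^{(q-p)/q}$ is actually sharper, but you then relax it to $2^{q-p}$ to match the stated constant, so the final result is the same.
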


\begin{proof}
    The case of Lebesgue-Bochner spaces is similar to the case of Besov spaces, which has been considered in~\cite{KazimierskiSmoothnessBesov}.

    \textbf{Step 1:} We show 2.~from Lemma~\ref{lem:XuRoachCorollaryPsmooth} and start with the case $q = \min(p,q)$. Due to Lemma~\ref{lem:GqFromGp}, the space $X$ is also $q$-smooth.
    Let $\LBfunc ,\LBdata  \in L^q(0,T;X)$. We apply 2.~from Lemma~\ref{lem:XuRoachCorollaryPsmooth} with Remark~\ref{rem:ConstantForP=S} to the space $X$ such that
    \begin{align*}
        \lVert j_q^X(\LBfunc (t)) - j_q^X(\LBdata (t)) \rVert_{X^*} \leq G^X_q \lVert \LBfunc (t) - \LBdata (t) \rVert_X^{q-1}
    \end{align*}
    for all $t \in [0,T]$. 
    Now with the duality mapping for Lebesgue-Bochner spaces from Theorem~\ref{thm:dualityLp(0,T;X)} we have $\left( j_q^{L^q(0,T;X)}\LBfunc  \right)(t) = j_q^X(\LBfunc (t))$ and thus
    \begin{align*}
        \lVert j_q^{L^q(0,T;X)}(\LBfunc ) - j_q^{L^q(0,T;X)}(\LBdata ) \rVert^{q^*}_{[L^q(0,T;X)]^*} 
        &= \int_0^T \lVert j_q^X(\LBfunc (t)) - j_q^X(\LBdata (t)) \rVert^{q^*}_{X^*} \dt \
        \leq (G^X_q )^{q^*} \int_0^T \lVert \LBfunc (t) - \LBdata (t) \rVert^{q^*(q-1)}_X \dt \\
        &=  (G^X_q )^{q^*} \lVert \LBfunc  - \LBdata  \rVert^q_{L^q(0,T;X)}
    \end{align*}
    using $q^*(q-1) = q$. Now by taking the $q^*$-th root we arrive at
    \begin{align*}
        \lVert j_q^{L^q(0,T;x)}(\LBfunc ) - j_q^{L^q(0,T;X)}(\LBdata ) \rVert_{[L^q(0,T;X)]^*}
        \leq G^X_q  \lVert \LBfunc  - \LBdata  \rVert^{q-1}_{L^q(0,T;X)}
    \end{align*}
    which shows that $L^p(0,T;X)$ is $q$-smooth.
    With Lemma~\ref{lem:XuRoachCorollaryPsmooth} we can choose $G_{q}^{L^p(0,T;X)} = G_q^X$. With Theorem~\ref{lem:GqFromGp} we have $G_q^X = 2^{p-q} \max( 2^p, G_p^X + K 2^{p-2}).$

    \textbf{Step 2:} Next, we consider the case $p = \min(p,q)$.
    We apply Lemma~\ref{lem:ThorstenKazimierski} and arrive at
    \begin{align*}
        \lVert j_q^X(x) - j_q^X(y) \lVert_{X^*} \leq \underbrace{\max(2^p, G_p^X + K 2^{p-2})}_{=: D} \max(\lVert x \rVert_X, \lVert y \rVert_X)^{q-p} \lVert x - y \rVert_X^{p-1}
    \end{align*} 
    \vspace{-2ex} for all $x,y \in X$.
    Then we get
    \begin{align*}
        \lVert j_q^{L^q(0,T;X)}(\LBfunc ) - j_q^{L^q(0,T;X)}(\LBdata ) \rVert_{L^{q^*}(0,T;X^*)}^{q^*} 
        & = \int_0^T \lVert j_q^X(\LBfunc (t)) - j_q^X(\LBdata (t)) \rVert^{q^*}_{X^*} \dt \\
        & \leq D^{q^*} \int_0^T \hspace{-2ex} \max\{\lVert \LBfunc (t) \rVert_X, \lVert \LBdata (t) \rVert_X \}^{(q-p)q^*} \lVert \LBfunc (t) - \LBdata (t) \rVert_X^{(p-1)q^*} \dt \\
        & \leq  D^{q^*} \hspace{-1ex} \left(\int_0^T \hspace{-2ex}  \max\{\lVert \LBfunc (t) \rVert_X, \lVert \LBdata (t) \rVert_X \}^q \dt \right)^{\frac{(q-p)q^*}{q}} \hspace{-1.5ex} \left( \int_0^T \hspace{-1ex}  \lVert \LBfunc (t) - \LBdata (t) \rVert_X^{q} \dt \right)^{\frac{(p-1)q^*}{q}} \hspace{-1ex} ,
    \end{align*}
    where we used H\"olders inequality with exponents $r = \frac{q}{q^*(q-p)}$ and $r^* = \frac{q}{q^*(p-1)} = \frac{q-1}{p-1} > 1$.
    Now we take the $q^*$-th root and arrive at
    \vspace{-2ex}
    \begin{align*}
        \lVert j_q^{L^q(0,T;X)}(\LBfunc ) - j_q^{L^q(0,T;X)}(\LBdata ) \rVert_{L^{q^*}(0,T;X^*)} 
        & \leq D  \left(\int_0^T \max\{\lVert \LBfunc (t) \rVert_X, \lVert \LBdata (t) \rVert_X \}^q \dt \right)^{\frac{q-p}{q}} \lVert \LBfunc  - \LBdata  \rVert^{p-1}_{L^q(0,T;X)} \\
        & \leq D  \left(\int_0^T \lVert \LBfunc (t) \rVert_X^q + \lVert \LBdata (t) \rVert_X^q \dt \right)^{\frac{q-p}{q}} \lVert \LBfunc  - \LBdata  \rVert^{p-1}_{L^q(0,T;X)} \\
        &= D  \left( \lVert \LBfunc  \rVert^q_{L^q(0,T;X)} + \lVert \LBdata  \rVert^q_{L^q(0,T;X)} \right)^{\frac{q-p}{q}} \lVert \LBfunc  - \LBdata  \rVert^{p-1}_{L^q(0,T;X)} \\
        & \leq D  \left( [\lVert \LBfunc  \rVert_{L^q(0,T;X)} + \lVert \LBdata  \rVert_{L^q(0,T;X)} ]^q \right)^{\frac{q-p}{q}} \lVert \LBfunc  - \LBdata  \rVert^{p-1}_{L^q(0,T;X)} \\
        & = D  2^{q-p} \max\{\lVert \LBfunc  \rVert_{L^q(0,T;X)}, \lVert \LBdata  \rVert_{L^q(0,T;X)} \}^{q-p} \lVert \LBfunc  - \LBdata  \rVert^{p-1}_{L^q(0,T;X)},
    \end{align*}
    where we used the inequality $a^{q} + b^{q} \leq
        (a + b)^{q}$ for $q > 1$ and $a,b \geq 0$.
    Thus $L^p(0,T;X)$ is $p$-smooth. By applying Lemma~\ref{lem:ThorstenKazimierski} again and with Lemma~\ref{lem:XuRoachCorollaryPsmooth}, we get
    \begin{align*}
        G_p^{L^q(0,T;X)} &= \max\left[ 2^p, D 2^{q-p} + K' 2^{p-2} \right] 
        = \max\left[ 2^p, 2^{q-p}\max(2^p, G_p^X + K 2^{p-2})  + K' 2^{p-2} \right] \\
        &= \max\left[ 2^p, \max(2^q, 2^{q-p} G_p^X + K 2^{q-2}) + K' 2^{p-2} \right] 
        = \max(2^q, 2^{q-p} G_p^X + K 2^{q-2}) + K' 2^{p-2}
    \end{align*}
    since $p \leq q$ and $K' 2^{p-2} > 0$.
\end{proof}

\begin{remark}
        The space $L^p(0,T;L^q(\Omega))$ is $\max(p,q,2)$-convex, since its dual space $L^{p^*}(0,T;L^{q^*}(\Omega))$ is $s^* =\min(p^*,q^*,2)$-smooth. Thus, with Lemma~\ref{lem:smoothness} the space $L^p(0,T;L^q(\Omega))$ is $s$-convex where $s = \max(p,q,2)$.     
        
        By combining Lemma~\ref{lem:contantLebesgue} and~\ref{lem:psmoothnesLp(0,T;X)} with Remark~\ref{rem:lipschitzConstantsK}, we can now compute the explicit constants for the Bochner space $L^p(0,T;L^q(\Omega))$.
\end{remark}

\section{Application to regularization algorithms}
\label{sec:RegAlgo}

After developing the geometry of Lebesgue-Bochner spaces, we now want to apply our findings to regularization algorithms. In particular, the theoretical results allow us to use Tikhonov regularization in Lebesgue-Bochner spaces with different exponents (Section~\ref{sec:TikhonovDual}). 

As another approach to include the time-dependency of the problem to the regularization scheme, we  develop a regularization algorithm in Section~\ref{sec:TimeDerivative} that penalizes the time derivative.

\subsection{Linear Tikhonov regularization in Banach spaces: A dual method}
\label{sec:TikhonovDual}

In this section we are interested in adapting the dual method~\cite[Section 5.3.2]{RegularizationBanachSpaces} for minimizing a linear Tikhonov functional in Banach spaces to Lebesgue and Lebesgue-Bochner spaces.
We minimize the Tikhonov functional
\begin{align}
\label{eq:tikhonov_func_banach}
    T_\alpha(\vartheta) \coloneqq \frac{1}{u} \lVert A \vartheta - \psi \rVert_\Y^u + \alpha \frac{1}{v} \lVert \vartheta \rVert_\X^v
\end{align}
for a linear operator $A : \X \to \Y$, $u,v > 1$, and a convex and smooth of power-type Banach space $\X$ and an arbitrary Banach space $\Y$. As explained in~\cite[Chapter 5.3]{RegularizationBanachSpaces} the minimizer $\vartheta_\alpha^{reg}$ of the Tikhonov functional is well-defined and provides a regularized solution 

With the duality mapping $J_v^\X$ (Definition~\ref{def:dualityMapping}), the subgradient $\partial T_\alpha$ (Definition~\ref{def:subgradient}) and the Bregman distance $D_{j_v^\X}$ defined by $D_{j_v^\X}(x,y) \coloneqq \frac{1}{v} \| x \|^v - \frac{1}{v} \| y \|^v - \langle j_v^\X(y), x-y \rangle$
for $x,y \in \X$ (see, e.g., \cite{RegularizationBanachSpaces} for details) and $j_v^\X$ a single-valued selection of the duality mapping $J_v^\X$, the dual method is given by the following algorithm:

\begin{algo}[Dual method]
\label{alg:dualMethod}

\begin{algorithm}

\KwIn{$\vartheta_0 \in \mathbb{X}$ initial point, $\vartheta_0^* = j_v^\mathbb{X}(\vartheta_0)$ dual initial point, \\
\hspace{1.8em} $G^{\X^*}_{v^*}$ constant from the Definition~\ref{def:smoothness} of smoothness of powertype, \\
\hspace{1.8em} $R_0$ with $D_{j_v^\mathbb{X}}(\vartheta_0, \vartheta_\alpha^{reg}) \leq R_0$}

$k \gets 0$\;

\While{$0 \notin \partial T_\alpha(\vartheta_k)$}{
  Choose $\theta_k \in \partial T_\alpha(\vartheta_k)$ \\
  $\mu_k \gets \min \left\{ \left( \frac{\alpha}{G^{\X^*}_{v^*}} \frac{R_k}{\| \theta_k \|^{v^*} }\right)^{\frac{1}{v^* - 1}}, \frac{1}{\alpha}  \right\}$ \\
  $R_{k+1} \gets (1 - \mu_k \alpha) R_k + \mu_k^{v^*} \frac{G^{\X^*}_{v^*}}{v^*} \| \theta_k \|^{v^*}$ \\
  $\vartheta_{k+1}^* \gets \vartheta_k^* - \mu_k \theta_k$ \\
  $\vartheta_{k+1} \gets j^{\mathbb{X}^*}_{v^*}(\vartheta_{k+1}^*)$
}

\KwOut{$\vartheta_{k+1}$}

\end{algorithm}
\end{algo}

The convergence rates of the dual method depend on the additional geometric properties of the spaces $\X$ and $\Y$, see Table~\ref{tab:convergenceRatesDual}. This is interesting for our purposes since by varying the exponents of the Lebesgue or Lebesgue-Bochner spaces we get different convergence rates.

\begin{table}[h]
    \centering
    \begin{tabular}{|c|c|l|}
        \hline
        $\X$ & $\Y$  & convergence rate \\
        \hline
        $v$-convex & arbitrary  & $\| \vartheta_k - \vartheta_\alpha^{reg} \| \leq C k^{- \frac{1}{v(v-1)}}$ \\
        \hline
        $v$-convex &  $u$-convex & $\| \vartheta_k - \vartheta_\alpha^{reg} \| \leq C k^{- \frac{M-1}{[(M-1)(v-1)-1]v}}$ if $M \coloneqq \max(u,v) > 2$ \\
        \hline
        $2$-convex & $2$-convex & $\| \vartheta_k - \vartheta_\alpha^{reg} \| \leq C \exp^{-\frac{k}{C}}$\\
        \hline
    \end{tabular}
    \caption{Convergence rates for the dual method. The constant $C$ is generic and can be different for every case. The proof of the results can be found in~\cite[Section 5.3.2]{RegularizationBanachSpaces}.}
    \label{tab:convergenceRatesDual}
\end{table}

\begin{remark}
If we choose $\X$ and $\Y$ as Lebesgue or Lebesgue-Bochner spaces, the following statements hold:
\begin{itemize}
    \item     Since the duality mappings of $\X$ and $\Y$ are single-valued, c.f.~\eqref{eq:dualityLp} and Theorem~\ref{thm:dualityLp(0,T;X)}, the Tikhonov functional~\eqref{eq:tikhonov_func_banach} is G\^ateaux differentiable with derivative $  \partial T_\alpha(\vartheta) = \{\nabla T_\alpha(\vartheta)\} = \{A^* j_u^\Y ( A \vartheta - \psi) + \alpha j_v^\X(\vartheta)\}.$
    \item Since $\X$ and $\Y$ are convex of power-type or even $2$-convex, see Section~\ref{sec:smoothnessLebesgue(Bochner)}, we get the convergence rates of the second or third row of Table~\ref{tab:convergenceRatesDual} if we choose the spaces and the exponents of the Tikhonov functional $u$ and $v$ appropriately.
\end{itemize}
\end{remark}

\subsection{Temporal variational regularization}
\label{sec:TimeDerivative}

\subsubsection{Variational model for smooth motion}

So far, we have purely exploited geometric properties of the function spaces. Now, we assume that the object moves continuously in time such that we can add penalization of the time derivative to the Tikhonov functional, obtaining a distinct temporal regularization. 

\paragraph{Function space setting} We stay in a Hilbert space setting and consider the spaces
\begin{align*}
\X =  L^2(0,T;L^2(\Omega)) \cap H^1(0,T;H^{-1}_0(\Omega)), \qquad
\Y = L^2(0,T;L^2(\Omega')), 
\end{align*}  
for rectangular bounded domains $\Omega, \Omega' \subset \R^n$, i.e., $\vartheta \in L^2(0,T;L^2(\Omega))$ and $\partial_t \vartheta \in L^2(0,T;H^{-1}_0(\Omega))$ if $ \vartheta \in \X$. Additionally, we assume homogeneous Neumann boundary conditions in time and space, i.e., $\partial_t \vartheta(0,\spatial) = \partial_t \vartheta(T,\spatial ) = 0$ for all $\spatial  \in \Omega$ and $\nabla \vartheta(t,\spatial) = 0$ for $\spatial  \in \partial \Omega$ and all $t \in [0,T]$.

To obtain some flexibility in the weighting of 
the $H^1$-norm, we introduce the equivalent norm
\begin{align}
\label{eq:H-1GammaNorm}
    \| f \|^2_{H^1(\Omega)} = \gamma \| f \|^2_{L^2(\Omega)} + \| \nabla f \|^2_{L^2(\Omega)}
\end{align}
for $f \in H^1(\Omega)$, $\gamma > 0$ on $H^1(\Omega)$ and define $H^{-1}_0(\Omega)$ as the dual space of $H^1(\Omega)$ with respect to the norm~\eqref{eq:H-1GammaNorm}.

\paragraph{Motion models and time derivative}
\label{paragraph:timeDerivativeSpaceH-1}
The choice of the space $\X$ includes many time-dependent functions since the assumption $\partial_t \vartheta \in L^2(0,T;H^{-1}_0(\Omega))$ is fulfilled for a wide class of movements.
Let $\Phi : [0,T] \times \R^n \to \R^n $ be a diffeomorphism that maps a point in $\R^n$ to a point in $ \R^n$ for each $t \in [0,T]$.
Then $\Gamma(t,\cdot) = \Phi^{-1}(t,\cdot)$ can be interpreted as a motion function.
The velocity $V$ of the motion $\Gamma$ is given by $V = \partial_t \Phi$.
    Let further $\statObj : \R^n \to \R$ be a function representing a static object. We consider two types of motion models:
    \begin{enumerate}
        \item[(a)] For a mass preserving motion, we can generate a time-dependent function $\vartheta$, representing the object $\statObj$ undergoing the motion $\Gamma$, by $\vartheta(t,\spatial ) = \statObj(\Gamma(t,\spatial )) | \det (\nabla \Gamma(t,\spatial )) |.$
   which fulfills the continuity equation
        \begin{align}
        \label{eq:continuity}
        \partial_t \vartheta = -\diver (\vartheta V).
    \end{align}
        \item[(b)] For an intensity preserving motion, i.e., each particle preserves its intensity over time, the time-dependent function is given by $  \vartheta(t,\spatial ) = \statObj(\Gamma(t,\spatial )) $
    which fulfills the optical flow constraint
        \begin{align}
        \label{eq:optical_flow}
        \partial_t \vartheta = - V \nabla \vartheta .
    \end{align}
    \end{enumerate}

Further details on motion models can, e.g., be found in~\cite[Section 2.1]{Hahn2021} or~\cite[Chapter 4]{dirks2015variational}.
Since we consider a bounded domain $\Omega \subset \R^n$, we chose $\Omega$ such that the investigated object does not leave $\Omega$ at any time $t \in [0,T]$.

\begin{lemma}
\label{lem:timeDerInH-1}
    Let $\vartheta$ be a time-dependent function that either conserves mass or intensity. If the velocity $V$ of the movement fulfills $V \in L^\infty(0,T;L^\infty(\Omega))$, it holds $\partial_t \vartheta \in L^2(0,T;H^{-1}_0(\Omega))$.
\end{lemma}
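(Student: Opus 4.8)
The goal is to show $\partial_t\vartheta\in L^2(0,T;H^{-1}_0(\Omega))$ in both motion scenarios, i.e.\ to bound $\int_0^T\|\partial_t\vartheta(t,\cdot)\|_{H^{-1}_0(\Omega)}^2\dt<\infty$. Since $H^{-1}_0(\Omega)$ is defined as the dual of $H^1(\Omega)$ with the norm \eqref{eq:H-1GammaNorm}, the natural approach is to estimate $\partial_t\vartheta$ by testing it against an arbitrary $\varphi\in H^1(\Omega)$ and using the PDE it satisfies --- the continuity equation \eqref{eq:continuity} in case (a) or the optical flow constraint \eqref{eq:optical_flow} in case (b). First I would fix $t$ and treat $\partial_t\vartheta(t,\cdot)$ as a distribution, rewrite the pairing $\langle\partial_t\vartheta(t,\cdot),\varphi\rangle$ using the respective transport identity, and integrate by parts to move the derivative off of $\vartheta$ and onto the smooth test function $\varphi$, thereby exposing the factors $V$ and $\nabla\varphi$ (or $\vartheta$ and $\nabla\varphi$).

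Concretely, in case (a) one has $\langle\partial_t\vartheta(t,\cdot),\varphi\rangle=-\langle\diver(\vartheta V),\varphi\rangle=\int_\Omega \vartheta(t,\spatial)\,V(t,\spatial)\cdot\nabla\varphi(\spatial)\dx$, where the boundary term vanishes because the object stays strictly inside $\Omega$ (so $\vartheta$ is compactly supported in $\Omega$, or at least $\vartheta V\cdot\nu=0$ on $\partial\Omega$). Applying Cauchy--Schwarz in $\spatial$ gives
\begin{align*}
|\langle\partial_t\vartheta(t,\cdot),\varphi\rangle|\le \|V(t,\cdot)\|_{L^\infty(\Omega)}\,\|\vartheta(t,\cdot)\|_{L^2(\Omega)}\,\|\nabla\varphi\|_{L^2(\Omega)}\le \|V\|_{L^\infty(0,T;L^\infty(\Omega))}\,\|\vartheta(t,\cdot)\|_{L^2(\Omega)}\,\|\varphi\|_{H^1(\Omega)},
\end{align*}
so that $\|\partial_t\vartheta(t,\cdot)\|_{H^{-1}_0(\Omega)}\le \|V\|_{L^\infty(0,T;L^\infty(\Omega))}\,\|\vartheta(t,\cdot)\|_{L^2(\Omega)}$. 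Case (b) is analogous: $\langle\partial_t\vartheta(t,\cdot),\varphi\rangle=-\int_\Omega (V\cdot\nabla\vartheta)\,\varphi\dx=\int_\Omega \vartheta\,\diver(\varphi V)\dx$, which again produces a bound of the form $\|\partial_t\vartheta(t,\cdot)\|_{H^{-1}_0(\Omega)}\lesssim(\|V\|_{L^\infty}+\|\diver V\|_{L^\infty})\,\|\vartheta(t,\cdot)\|_{L^2(\Omega)}$ --- here one may need $V$ slightly better than merely $L^\infty$, or instead keep the derivative on $\vartheta$ and pair with $\varphi$ directly, using that $\vartheta(t,\cdot)=\statObj\circ\Gamma(t,\cdot)$ has spatial gradient controlled by $\nabla\statObj$ and $\nabla\Gamma$; I would present whichever keeps the hypotheses minimal and matches the lemma's statement.

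Squaring and integrating over $t\in(0,T)$ then yields $\|\partial_t\vartheta\|_{L^2(0,T;H^{-1}_0(\Omega))}\le C\,\|\vartheta\|_{L^2(0,T;L^2(\Omega))}$, which is finite since $\vartheta\in L^2(0,T;L^2(\Omega))$ by the standing assumption on $\X$ (equivalently, because $\statObj$ is integrable and $\Gamma(t,\cdot)$ is a volume-controlled diffeomorphism on the bounded domain, uniformly in $t$). The main obstacle, and the point that needs the most care, is the integration by parts: one must justify that no boundary contribution appears, which is where the assumption that the object never leaves $\Omega$ is used (giving either compact support of $\vartheta$ in $\Omega$ or the vanishing of the normal flux), and one must confirm that the transport identities \eqref{eq:continuity}/\eqref{eq:optical_flow} hold in the weak sense needed for merely $L^2$-regular $\vartheta$ and $L^\infty$-regular $V$ --- a standard but slightly delicate approximation argument (mollify $\vartheta$ and $V$, pass to the limit). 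A secondary technical point is measurability in $t$ of $t\mapsto\|\partial_t\vartheta(t,\cdot)\|_{H^{-1}_0(\Omega)}$, which follows from strong measurability of $\vartheta$ as an $L^2(\Omega)$-valued map together with the pointwise bound just derived.
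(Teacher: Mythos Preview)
Your approach is correct and essentially identical to the paper's: the paper simply observes that $\partial_t\vartheta=-\diver(\vartheta V)$ lies in $L^2(0,T;H^{-1}_0(\Omega))$ as soon as $\vartheta V\in L^2(0,T;L^2(\Omega))$, which follows from $\vartheta\in L^2(0,T;L^2(\Omega))$ and $V\in L^\infty(0,T;L^\infty(\Omega))$, and then states that the intensity-preserving case is analogous. Your write-up is more explicit (working through the duality pairing and integration by parts) and you correctly flag that case~(b) seems to require $\diver V\in L^\infty$ as well --- a point the paper's terse ``analogously'' does not address.
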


    If the mass is conserved, then the continuity equation $ \partial_t \vartheta = - \diver(\vartheta V)$
is fulfilled and $\partial_t \vartheta \in L^2(0,T;H^{-1}_0(\Omega))$ holds if $\vartheta V \in L^2(0,T;L^2(\Omega))$. Since $\vartheta \in L^2(0,T;L^2(\Omega))$, it suffices that $V \in L^\infty(0,T;L^\infty(\Omega))$. This holds analogously for intensity preserving movements.

\paragraph{Minimization problem of temporal variational regularization}
We aim at solving the minimization problem
\begin{align}
\label{eq:minProblemTimeDer}
\begin{split}
\min_{\vartheta \in \X} E( \vartheta) = &\min_{\vartheta \in \X} \left\{ \underbrace{\frac{1}{2} \| A \vartheta - \psi\|_{L^2(0,T;L^2(\Omega))}^2}_{=F(\vartheta)} + \underbrace{\frac{\alpha}{2} \| \vartheta \|^2_{L^2(0,T;L^2(\Omega))} + \frac{\beta}{2} \| \partial_t \vartheta \|^2_{L^2(0,T;H^{-1}_0(\Omega))}}_{=G(\vartheta)} \right\} \\
= &\min_{\vartheta \in \X} \left\{ \frac{1}{2} \int_0^T \left( \| (A \vartheta)(t) - \psi(t) \|_{L^2(\Omega)}^2 + \frac{\alpha}{2} \| \vartheta(t) \|^2_{L^2(\Omega)} + \frac{\beta}{2} \| \partial_t \vartheta(t) \|^2_{H^{-1}_0(\Omega)} \right) \dt \right\} ,
\end{split}
\end{align}
for the data $\psi$ and regularization parameters $\alpha, \beta > 0$.
by defining $E(\vartheta) = F(\vartheta) + G(\vartheta)$
with
\begin{align*}
	F(\vartheta) &= \frac{1}{2}\| A \vartheta - \psi \|^2_{L^2(0,T;L^2(\Omega))}, \\
	G(\vartheta) &= \frac{\alpha}{2} \| \vartheta \|^2_{L^2(0,T;L^2(\Omega))} + \frac{\beta}{2} \| \partial_t \vartheta \|^2_{L^2(0,T;H^{-1}_0(\Omega))} . 
\end{align*}
\end{comment}
The following lemma yields a computationally convenient representation of the $H^{-1}_0$-norm:
\begin{lemma}[$H^{-1}_0$-norm]
    The $H^{-1}_0$-norm on the space $H^{-1}_0(\Omega)$ with homogeneous Neumann boundary conditions can be understood as
    \begin{align*}
        \lVert \phi \rVert^2_{H^{-1}_0(\Omega)} = \| \nabla (-\Delta_N + \gamma I)^{-1} \phi \|^2_{L^2(\Omega)} + \gamma \| (-\Delta_N + \gamma I)^{-1} \phi \|^2_{L^2(\Omega)},
    \end{align*}
    where $\Delta_N$ is the Neumann Laplace operator, $\phi \in H^{-1}_0(\Omega)$ and $\gamma > 0$.
\end{lemma}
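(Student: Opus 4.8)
The plan is to identify the $H^{-1}_0$-norm as the dual norm realized through the Riesz isomorphism of $H^1(\Omega)$ and then to recognize this isomorphism as $(-\Delta_N+\gamma I)^{-1}$. By construction, $H^{-1}_0(\Omega)$ is the dual of $H^1(\Omega)$ equipped with the inner product $\langle f,g\rangle_{H^1(\Omega)} := \gamma\langle f,g\rangle_{L^2(\Omega)} + \langle\nabla f,\nabla g\rangle_{L^2(\Omega)}$, which induces the norm~\eqref{eq:H-1GammaNorm} and is equivalent to the standard $H^1$-norm because $\gamma>0$. Hence $\bigl(H^1(\Omega),\langle\cdot,\cdot\rangle_{H^1(\Omega)}\bigr)$ is a Hilbert space and the dual norm on $H^{-1}_0(\Omega)$ is $\|\phi\|_{H^{-1}_0(\Omega)} = \sup\{\,\langle\phi,f\rangle : f\in H^1(\Omega),\ \|f\|_{H^1(\Omega)}\le 1\,\}$.

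First I would invoke the Riesz representation theorem on this Hilbert space: for every $\phi\in H^{-1}_0(\Omega)$ there is a unique $u\in H^1(\Omega)$ with $\langle\phi,f\rangle = \langle u,f\rangle_{H^1(\Omega)}$ for all $f\in H^1(\Omega)$, and moreover $\|\phi\|_{H^{-1}_0(\Omega)} = \|u\|_{H^1(\Omega)}$. Written out, the defining identity reads
\begin{align*}
\int_\Omega \nabla u\cdot\nabla f \dx + \gamma\int_\Omega u f \dx = \langle\phi,f\rangle \qquad \forall\, f\in H^1(\Omega),
\end{align*}
which is precisely the weak formulation of $-\Delta u + \gamma u = \phi$ in $\Omega$ with homogeneous Neumann boundary conditions (the boundary contribution $\int_{\partial\Omega}\partial_\nu u\, f$ vanishes as the natural boundary condition of this variational problem). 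Equivalently, the bilinear form $a(u,f) = \langle u,f\rangle_{H^1(\Omega)}$ is bounded and coercive on $H^1(\Omega)$, so by Lax--Milgram $-\Delta_N+\gamma I : H^1(\Omega)\to H^{-1}_0(\Omega)$ is an isomorphism, and its inverse applied to $\phi$ is exactly the Riesz representative, i.e.\ $u = (-\Delta_N+\gamma I)^{-1}\phi$.

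It then only remains to substitute: combining $\|\phi\|^2_{H^{-1}_0(\Omega)} = \|u\|^2_{H^1(\Omega)}$ with the definition~\eqref{eq:H-1GammaNorm} of the $H^1$-norm yields
\begin{align*}
\|\phi\|^2_{H^{-1}_0(\Omega)} = \gamma\|u\|^2_{L^2(\Omega)} + \|\nabla u\|^2_{L^2(\Omega)} = \gamma\,\bigl\|(-\Delta_N+\gamma I)^{-1}\phi\bigr\|^2_{L^2(\Omega)} + \bigl\|\nabla(-\Delta_N+\gamma I)^{-1}\phi\bigr\|^2_{L^2(\Omega)},
\end{align*}
which is the claimed identity. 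The only point requiring care is the precise weak definition of the Neumann Laplacian $\Delta_N$ and the verification that $-\Delta_N+\gamma I$ is an isomorphism between $H^1(\Omega)$ and its dual; this is the one place where the coercivity supplied by $\gamma>0$ (and hence the particular choice of equivalent norm~\eqref{eq:H-1GammaNorm}) enters, and where one must observe that the homogeneous Neumann condition is encoded as a natural condition of the variational problem rather than imposed as an explicit constraint.
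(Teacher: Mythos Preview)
Your proof is correct and follows essentially the same approach as the paper: both invoke the Riesz representation theorem on $H^1(\Omega)$ with the $\gamma$-weighted inner product, identify the Riesz representative as $(-\Delta_N+\gamma I)^{-1}\phi$ via the weak formulation of the Neumann problem, and then substitute into the $H^1$-norm. The paper writes the integration-by-parts step (Green's first identity) explicitly whereas you phrase it as recognizing the weak formulation and appealing to Lax--Milgram, but this is only a cosmetic difference.
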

\begin{proof}
Let $\phi \in H^{-1}_0(\Omega)$. Then by the Riesz representation theorem there is a unique function $f \in H^1(\Omega)$, s.t. 
\begin{align*}
    \phi(g)
    = \langle f, g \rangle_{H^1(\Omega)} 
    = \int_\Omega \nabla f \nabla g \dx + \gamma \int_\Omega f g \dx 
    = \int_\Omega g (- \Delta_N + \gamma I) f \dx 
    = \langle (-\Delta_N + \gamma I)f, g \rangle_{L^2(\Omega)}
    \quad \forall g \in H^1(\Omega),
\end{align*}
where 
the second equality follows with Green's first identity and the Neumann boundary conditions. This shows that $(- \Delta_N + \gamma I) f \in H^{-1}_0(\Omega)$.
We set $ \phi =  (- \Delta_N + \gamma I) f$ and thus $f = (- \Delta_N + \gamma I)^{-1} \phi$. By consequence, we obtain the representation of the $H^{-1}_0$-norm as
\begin{align*}
	\| \phi \|^2_{H^{-1}_0(\Omega)} 
 &=  \| f \|^2_{H^1(\Omega)}
 = \| (-\Delta_N + \gamma I)^{-1} \phi \|^2_{H^1(\Omega)} \\
 &=  \| \nabla (-\Delta_N + \gamma I)^{-1} \phi \|^2_{L^2(\Omega)} + \gamma \| (-\Delta_N + \gamma I)^{-1} \phi \|^2_{L^2(\Omega)}.
\end{align*}
\end{proof}
\begin{lemma}[Existence and uniqueness of the minimizer]
    The minimizer $\vartheta$ of problem~\eqref{eq:minProblemTimeDer} exists, is unique and fulfills the first order optimality condition for the Fr\'echet derivative $E'$ of $E$:
\begin{align}
\label{eq:optimalityCond}
0 &= E'(\vartheta) = 
A^*(A \vartheta - \psi) + \alpha \vartheta - \beta (-\Delta_N + \gamma I)^{-1} (\partial_{tt}  \vartheta).
\end{align}
\end{lemma}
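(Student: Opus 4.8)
The strategy is a standard direct-method argument for the existence and uniqueness, followed by a computation of the Fréchet derivative and the derivation of the optimality condition. First I would establish that $E$ is a sum of convex functionals on $\X$: $F(\vartheta) = \tfrac12\|A\vartheta-\psi\|^2$ is convex since $A$ is linear and bounded (boundedness of $A$ from $L^2(0,T;L^2(\Omega))$ to $L^2(0,T;L^2(\Omega'))$ is the standing assumption for dynamic CT), and $G$ is a sum of squared seminorms/norms, hence convex; moreover, because of the term $\tfrac{\alpha}{2}\|\vartheta\|_{L^2(0,T;L^2(\Omega))}^2$ together with $\tfrac{\beta}{2}\|\partial_t\vartheta\|_{L^2(0,T;H^{-1}_0(\Omega))}^2$, the functional $G$ is coercive with respect to the graph norm of $\X = L^2(0,T;L^2(\Omega)) \cap H^1(0,T;H^{-1}_0(\Omega))$, and in fact $G(\vartheta) \geq c\|\vartheta\|_\X^2$ for some $c>0$ by the definition of the intersection-space norm. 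Strict convexity follows because $\vartheta \mapsto \tfrac{\alpha}{2}\|\vartheta\|_{L^2(0,T;L^2(\Omega))}^2$ is strictly convex on $\X$ (it already separates points of $\X$). Thus $E$ is strictly convex, coercive, and weakly lower semicontinuous on the Hilbert space $\X$ (lower semicontinuity of $F$ uses weak continuity of the bounded linear map $A$ and weak lsc of the norm; lower semicontinuity of $G$ uses weak lsc of norms), so the direct method gives a unique minimizer $\vartheta \in \X$.

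Next I would compute the Fréchet derivative. Since $E$ is a quadratic functional, it is Fréchet differentiable on $\X$ with
\[
\langle E'(\vartheta), v\rangle = \langle A\vartheta - \psi, Av\rangle_{L^2(0,T;L^2(\Omega))} + \alpha\langle\vartheta,v\rangle_{L^2(0,T;L^2(\Omega))} + \beta\langle\partial_t\vartheta,\partial_t v\rangle_{L^2(0,T;H^{-1}_0(\Omega))}
\]
for all $v\in\X$. The first term becomes $\langle A^*(A\vartheta-\psi), v\rangle$ by definition of the adjoint, and the second is $\langle\alpha\vartheta, v\rangle$. For the third term I would use the representation of the $H^{-1}_0$-norm from the preceding lemma: $\langle\phi,\eta\rangle_{H^{-1}_0(\Omega)} = \langle(-\Delta_N+\gamma I)^{-1}\phi,\eta\rangle_{L^2(\Omega)}$ whenever this pairing makes sense, so that $\langle\partial_t\vartheta,\partial_t v\rangle_{L^2(0,T;H^{-1}_0(\Omega))} = \int_0^T\langle(-\Delta_N+\gamma I)^{-1}\partial_t\vartheta(t),\partial_t v(t)\rangle_{L^2(\Omega)}\dt$. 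Then I integrate by parts in $t$, using the homogeneous Neumann-in-time boundary conditions $\partial_t\vartheta(0,\cdot)=\partial_t\vartheta(T,\cdot)=0$ to kill the boundary terms, obtaining $-\int_0^T\langle(-\Delta_N+\gamma I)^{-1}\partial_{tt}\vartheta(t),v(t)\rangle_{L^2(\Omega)}\dt = -\langle(-\Delta_N+\gamma I)^{-1}\partial_{tt}\vartheta, v\rangle$. Setting $E'(\vartheta)=0$ (which characterizes the minimizer of a convex functional) and reading off the pairing against arbitrary $v$ yields exactly the stated equation $0 = A^*(A\vartheta-\psi) + \alpha\vartheta - \beta(-\Delta_N+\gamma I)^{-1}(\partial_{tt}\vartheta)$.

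The main obstacle is the rigorous justification of the integration by parts in time combined with the identification of $H^{-1}_0$ with $(-\Delta_N+\gamma I)^{-1}$-weighted $L^2$-pairings: one must check that $\partial_{tt}\vartheta$ exists in an appropriate weak sense for $\vartheta\in\X$ (a priori only $\partial_t\vartheta\in L^2(0,T;H^{-1}_0(\Omega))$, so $\partial_{tt}\vartheta$ lives in a still weaker space and the expression $(-\Delta_N+\gamma I)^{-1}\partial_{tt}\vartheta$ must be interpreted distributionally in time with values in $H^1(\Omega)$), and that the Neumann-in-time conditions are meaningful as traces. I would handle this by first deriving the variational (weak) form of the optimality condition — which is unambiguous — and then presenting \eqref{eq:optimalityCond} as its formal strong counterpart, valid under the regularity assumptions on $\vartheta$ stated in the function-space setting. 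The remaining steps (convexity, coercivity, weak lsc, differentiation of a quadratic form) are routine.
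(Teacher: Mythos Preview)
Your proposal is correct and follows essentially the same route as the paper: convexity/strict convexity for existence and uniqueness, then computation of the first variation, using the $H^{-1}_0$ inner product via $(-\Delta_N+\gamma I)^{-1}$ and integration by parts in time with the Neumann-in-time conditions. The only cosmetic difference is that the paper expands the $H^{-1}_0$-norm explicitly and then applies Green's identity in space to arrive at the $L^2$-pairing $\langle(-\Delta_N+\gamma I)^{-1}\partial_t\vartheta,\partial_t v\rangle_{L^2}$, whereas you invoke this identity directly; your added discussion of coercivity and of the weak interpretation of $\partial_{tt}\vartheta$ is more careful than what the paper supplies.
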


\begin{proof}
It is clear that $E$, as a sum of convex and Fr\'echet differentiable norms, is again convex and Fr\'echet differentiable. Hence, a unique minimzer exists and fulfills the first order optimality condition~\eqref{eq:optimalityCond}.



    To compute $E'$, we compute the first variation for $\lambda \in \R$ and $\psi \in \X$ and evaluate it for $\lambda = 0$.
    We restrict ourselves to calculating the derivative of $G_2(\vartheta) \coloneqq \frac{\beta}{2} \| \partial_t \vartheta \|^2_{L^2(0,T;H^{-1}_0(\Omega))}$. 
        \begin{align*}
    G_2'(\vartheta)\psi &= \frac{\partial}{\partial \lambda} \left[ \frac{\beta}{2} \| \partial_t (\vartheta + \lambda \psi) \|^2_{L^2(0,T;H^{-1}_0(\Omega))}
         \right]_{\lambda = 0} \\
         &=  \frac{\partial}{\partial \lambda} \Bigl[ \frac{\beta}{2} \left( \nabla (-\Delta_N + \gamma I)^{-1} (\partial_t \vartheta + \lambda \partial_t \psi), \nabla (-\Delta_N + \gamma I)^{-1} (\partial_t \vartheta + \lambda \partial_t \psi) \right)_{L^2(0,T;L^2(\Omega))} \\
         & \quad +   \frac{\beta \gamma}{2} \left(  (-\Delta_N + \gamma I)^{-1} (\partial_t \vartheta + \lambda \partial_t \psi),  (-\Delta_N + \gamma I)^{-1} (\partial_t \vartheta + \lambda \partial_t \psi) \right)_{L^2(0,T;L^2(\Omega))}
         \Bigr]_{\lambda = 0} \\
         &= \beta \left(\nabla (-\Delta_N + \gamma I)^{-1} \partial_t \vartheta, \nabla (-\Delta_N + \gamma I)^{-1} \partial_t \psi\right)_{L^2(0,T;L^2(\Omega))} \\
         & \quad + \beta \gamma \left( (-\Delta_N + \gamma I)^{-1} \partial_t \vartheta, (-\Delta_N + \gamma I)^{-1} \partial_t \psi\right)_{L^2(0,T;L^2(\Omega))}.
    \end{align*}
    By applying Green's first identity 
    to the spatial domain and using the spatial Neumann boundary conditions we obtain
    \begin{align*}
        G_2'(\vartheta)\psi &=  \beta \left(\nabla (-\Delta_N + \gamma I)^{-1} \partial_t \vartheta, \nabla (-\Delta_N +\gamma I)^{-1} \partial_t \psi\right)_{L^2(0,T;L^2(\Omega))} \\
        & \quad + \beta  \left( (-\Delta_N + \gamma I)^{-1} \partial_t \vartheta, \gamma I (-\Delta_N + \gamma I)^{-1} \partial_t \psi\right)_{L^2(0,T;L^2(\Omega))} \\
        &= \beta \left( (-\Delta_N + \gamma I)^{-1} \partial_t \vartheta,  (-\Delta_N + \gamma I)(-\Delta_N + \gamma I)^{-1} \partial_t \psi\right)_{L^2(0,T;L^2(\Omega))} \\
        &= \beta \left( (-\Delta_N + \gamma I)^{-1} \partial_t \vartheta, \partial_t \psi\right)_{L^2(0,T;L^2(\Omega))}.
    \end{align*}
    Now, we apply partial integration to the time domain and get
    \begin{align*}
        G_2'(\vartheta)\psi &= \beta \left( (-\Delta_N + \gamma I)^{-1} \partial_t \vartheta, \partial_t \psi\right)_{L^2(0,T;L^2(\Omega))} \\
        &= \beta \left(-\partial_t (-\Delta_N + \gamma I)^{-1} \partial_t \vartheta, \psi\right)_{L^2(0,T;L^2(\Omega))} + \psi(T)(-\Delta_N + \gamma I)^{-1} \partial_t \vartheta(T) - \psi(0)(-\Delta_N + \gamma I)^{-1} \partial_t \vartheta(0).
    \end{align*}
    Assuming Neumann boundary conditions in time we obtain $G_2'(\vartheta) =  - \beta (-\Delta_N + \gamma I)^{-1} \partial_{tt} \vartheta.$


\end{proof}

\subsubsection{Minimization and algorithm}
The first order optimality condition~\eqref{eq:optimalityCond} is fulfilled if the minimizer is the solution of the PDE
\begin{align*}
\beta \partial_{tt} \vartheta = \alpha (- \Delta_N + \gamma I) \vartheta + (-\Delta_N + \gamma I) A^* (A \vartheta - \psi).
\end{align*}

This $(n+1)$-dimensional PDE is too large for solving the resulting linear system directly if the spatial dimension $n > 1$ or the resolution is high. Instead, we determine the minimizer of~\eqref{eq:optimalityCond} by solving the equivalent fixed point equation
\begin{align*}
 0 =  F'(\vartheta) +  G'(\vartheta) \quad
 \Leftrightarrow \quad
 \vartheta - \tau  F'(\vartheta) = \vartheta + \tau  G'(\vartheta)
\end{align*}
with a suitable step size $\tau > 0$.

To solve this fixed point equation, we use the implicit scheme
\begin{align}
\label{eq:algoScheme1}
\vartheta_{k + \frac{1}{2}} = \vartheta_k - \tau  F'(\vartheta_k) &= \vartheta_k - \tau A^* (A \vartheta_k - \psi) \\
\label{eq:algoScheme2}
\vartheta_{k+1} + \tau \ G'(\vartheta_{k+1}) &= \vartheta_{k+\frac{1}{2}}.
\end{align}

The first step is a simple gradient descent step equivalent to the standard Landweber iteration.
In the second step~\eqref{eq:algoScheme2} the next iterate $\vartheta_{k+1}$ is the solution of the PDE
\begin{align}
\label{eq:PDESecondStep}
 (1 + \tau \alpha) (-\Delta_N + \gamma I) \vartheta_{k+1} - \tau \beta \partial_{tt} \vartheta_{k+1} =  (-\Delta_N + \gamma I) \vartheta_{k+\frac{1}{2}}.
\end{align}

We solve~\eqref{eq:PDESecondStep} in Fourier space. Instead of using the full Fourier series, we only use cosine functions since they can incorporate Neumann boundary conditions and are the eigenfunctions of the operator given by the left side of~\eqref{eq:PDESecondStep}. 
In particular, we have the eigenfunctions $u_{w_i}$ and $u_\tfreq $ for $ \tfreq ,w_i \in \N$ for each dimension given by
\begin{align*}
   u_{\omega_i}(\spatial_i) = \cos\left(\omega_i \pi \frac{\spatial_i + b_i}{2b_i}\right), \quad
    u_\tfreq (t) = \cos\left(\tfreq  \pi \frac{t}{T}\right),
\end{align*}
for the spatial variables $\spatial_i \in [-b_i,b_i]$, $b_i > 0$ for $i =  1,\ldots,n$ and the temporal variable $t \in [0,T]$.

It is straight forward to see that
\begin{align*}
          (1 + \tau \alpha) (-\Delta_N + \gamma I) u_\tfreq  u_{\omega_1} \cdots  u_{\omega_n}  - \tau \beta \partial_{tt}  (u_\tfreq  u_{\omega_1} \cdots u_{\omega_n}) 
          =         \left( (1 + \tau \alpha) + \sum_{i=1}^n \left( \frac{\omega_i \pi}{2 b_i}   \right)^2
          + \tau \beta \left( \frac{\tfreq  \pi}{T} \right)^2 \right) u_\tfreq  u_{\omega_1} \cdots u_{\omega_n},
\end{align*}
which shows that the product of these functions is indeed an eigenfunction.

Furthermore, they fulfill the Neumann boundary conditions, since we have
\begin{align*}
    \partial_{\spatial_i} u_{w_i}(b_i) = \partial_{\spatial_i} u_{w_i}(-b_i) = 0, \text{ and }  \partial_{t} u_{\tfreq }(0) = \partial_{t} u_{\tfreq }(T) = 0.
\end{align*}

Thus, we want to solve
\begin{align*}
  (1 + \tau \alpha) \mathcal{C}_{(\omega, \tfreq )} ((-\Delta_N + \gamma I) \vartheta_{k+1})
  - \tau \beta \mathcal{C}_{(\omega, \tfreq )} (\partial_{tt} \vartheta_{k+1}) 
  = \mathcal{C}_{(\omega, \tfreq )} \left((-\Delta_N + \gamma I) \vartheta_{k + \frac{1}{2}}\right),
\end{align*}
where $\mathcal{C}_{(\omega, \tfreq )}$ are the coefficients of the $(n+1)$-dimensional cosine series with time variable $\tfreq $ and space variable $\omega$, i.e.,
\begin{align}
\label{eq:cosineSeries}
    \vartheta(t,\spatial ) \approx \sum_{\tfreq  \in \N} \sum_{\omega_1 \in \N} \cdots \sum_{\omega_n \in \N} \mathcal{C}_{(\omega,\tfreq )} (\vartheta) u_\tfreq  u_{\omega_1} \cdots u_{\omega_n} 
\end{align}
with the spatial domain $\Omega = [-b_1,b_1] \times \ldots \times [-b_n,b_n]$, $b_i > 0$. 

The coefficients of the cosine series are then given by
\begin{align*}
    \mathcal{C}_{(\omega,\tfreq )} (\vartheta) = \frac{2}{T \prod_{i=1}^n b_i} \int_0^T \int_{-b_1}^{b_1} \cdots \int_{-b_n}^{b_n} \vartheta(t,\spatial ) u_\tfreq  u_{\omega_1} \cdots u_{\omega_n} \dx \dt.
\end{align*}

Using the property of the coefficients of the cosine series
\begin{align*}
\mathcal{C}_{(\omega, \tfreq )}(D^a \vartheta) =  (-1)^{|a|}  \left(\frac{\pi}{2b_1} \omega_1\right)^{a_1} \cdots \left(\frac{\pi}{2b_n} \omega_n \right)^{a_n} \left(\frac{ \pi \tfreq }{T} \right)^{a_{n+1}} \mathcal{C}_{(\omega, \tfreq )} (\vartheta)
\end{align*}
where $a_i$ is even for $i=1,\ldots,n+1$, we obtain
\begin{align}
\label{eq:FourierIterateEquation}
	(1+\tau \alpha) (|\omega'|^2 + \gamma) \mathcal{C}_{(\omega, \tfreq )}(\vartheta_{k+1}) +\tau \beta (\tfreq')^2 \mathcal{C}_{(\omega, \tfreq )}(\vartheta_{k+1}) &= (|\omega'|^2 + \gamma) \mathcal{C}_{(\omega, \tfreq )}(\vartheta_{k + \frac{1}{2}}),
\end{align}
where $\omega'$ is  given by $\omega'_i = \frac{\pi}{2b_i} \omega_i$ for $i = 1,\ldots,n$ and $\tfreq' = \frac{ \pi \tfreq }{T}$.

This means, we can compute the coefficients of the cosine series of $\vartheta_{k+1}$ by
\begin{align}
	 \mathcal{C}_{(\omega, \tfreq )} ( \vartheta_{k+1}) = \frac{1}{(1 + \tau\alpha)  + \tau \beta \frac{(\tfreq ')^2}{|\omega'|^2 + \gamma}} \mathcal{C}_{(\omega, \tfreq )}(\vartheta_{k + \frac{1}{2}}).
\end{align}

All in all, we arrive at the scheme:

\begin{algo}
\label{algo:timeDerivative}
For a given iterate $\vartheta_k$ the next iterate can be computed by
    \begin{align}
    \label{eq:algo_landweber}
        \vartheta_{k + \frac{1}{2}} &= \vartheta_k - \tau A^* (A \vartheta_k - \psi), \\
    \label{eq:algo_FourierCoef}
        \mathcal{C}_{(\omega, \tfreq )}(\vartheta_{k+1}) &=
            \frac{1}{(1 + \tau\alpha)  + \tau \beta \frac{(\tfreq ')^2}{|\omega'|^2 + \gamma}} \mathcal{C}_{(\omega, \tfreq )}(\vartheta_{k + \frac{1}{2}}) \quad \text{for all $\omega_i,\tfreq  \in \mathbb{N}$},\\
    \label{eq:algo_endStep}
        \vartheta_{k+1} &\text{ is given by~\eqref{eq:cosineSeries},}
    \end{align}
    where $\tau$ is chosen as $\tau = \frac{1}{\lVert A^* A \rVert}$ to guarantee convergence as in the standard Landweber iteration scheme~\cite{Rieder:keineProblemeMitInversenProblemen, Engl1996RegularizationOI}.
    
    For noisy measurement data $\psi^\delta$ with noise level $\delta$ fulfilling $\|\psi - \psi^{\delta}\| \leq \delta$, the iteration is stopped when either the discrepancy $\| A \vartheta_k - \psi \|$
    does not decrease further and starts increasing or when a threshold is reached, i.e.,
    \begin{align}
    \label{eq:discrepancyTimeDer}
         \| A \vartheta_k - \psi \| \leq \rho \delta,
    \end{align}
    with a fixed $\rho > 1$.
\end{algo}

\begin{remark}
    For the numerical implementation we can use the discrete cosine transform $\mathcal{C}_{d}$ to compute $\mathcal{C}_{(\omega, \tfreq )}(\vartheta_{k + \frac{1}{2}})$ for some fixed $\omega$ and $\tfreq $ in the second step in~\eqref{eq:algo_FourierCoef}. After dividing by the factor $(1 + \tau\alpha)  + \tau \beta \frac{(\tfreq ')^2}{|\omega'|^2 + \gamma}$ as in~\eqref{eq:algo_FourierCoef}, we get the solution using the inverse discrete Fourier transform by $\vartheta_{k+1} = \mathcal{C}_{d}^{-1} \left(\mathcal{C}_{d} (\vartheta_{k+1})\right).$
\end{remark}

\section{Numerical application to dynamic computerized tomography}
\label{sec:NumApplicationDynCT}As an example we consider dynamic computerized tomography (CT). We apply the dual method~\ref{alg:dualMethod}, i.e., Tikhonov regularization in Banach spaces, as well as Algorithm~\ref{algo:timeDerivative}, which penalizes the time derivative in Hilbert spaces.  

First, we introduce the mathematical model of dynamic CT in a similar way as in~\cite{Burger_2017} and compute the static and dynamic adjoint operators in the Banach space setting, which can be derived from the Hilbert space case considered, e.g., in~\cite{natterer}. In Section~\ref{sec:Testdata}, we introduce the different test cases, two simulated phantoms and one measured data set. The Sections~\ref{sec:DualMethodNumerics} and~\ref{sec:TimeDerivativeNumerics} contain different experiments for the dual method~\ref{alg:dualMethod} and Algorithm~\ref{algo:timeDerivative}, respectively, which is followed by a discussion of the results.


\subsection{Dynamic computerized tomography}

The static Radon transform $R : L^r(B) \to W$ is defined by $(R f)(\varphi, \sigma) = g(\varphi, \sigma) = \int_{-1}^1  f\left(\sigma \theta(\varphi) + w \theta(\varphi)^{\bot}\right) \dw$
for $1 < r < \infty$, the unit circle $B \subset \R^2$ with $\theta(\varphi) = ( \cos(\varphi), \sin(\varphi))$ and $\theta(\varphi)^{\bot} = ( -\sin(\varphi), \cos(\varphi))$. This is not a constraint since each bounded domain in $\R^2$ can be scaled to the unit circle. The space $ W \coloneqq L^s([-1,1] \times [0,\pi), (1-\sigma^2)^{-\frac{s-1}{2}})$ is the space of all functions $g$ on $[-1,1] \times [0,\pi) $ with
\begin{align}
\label{eq:normW}
	 \lVert g \rVert_{W}^s \coloneqq \int_0^\pi \int_{-1}^1 \lvert g(\varphi, \sigma) \rvert^s 
	 (1-\sigma^2)^{-\frac{s-1}{2}} \dsig \dphi < \infty,
\end{align}
%
since $s^* = \frac{s}{s-1}$ as in~\cite[Section 3.2]{KazimierskiSmoothnessBesov}.

\begin{lemma}
    \label{lem:staticRadon}
    If $s \leq r$, 
    $R : L^r(B) \to W$ is linear and bounded and its adjoint $R^* : W^* \to L^{r^*}(B)$ is given by
        \begin{align*}
            (R^* g)(\spatial) = \int_0^\pi g(\spatial\cdot \theta(\varphi),\varphi) \left(1 - (\spatial \cdot \theta(\varphi))^2 \right)^{-\frac{s-1}{2}} \dphi.
        \end{align*}
\end{lemma}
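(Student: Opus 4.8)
The plan is to verify the three assertions in turn --- linearity, boundedness, and the formula for $R^*$ --- where the boundedness estimate is the only place the hypothesis $s \le r$ enters and the identification of the adjoint is a change-of-variables computation. Linearity of $R$ is immediate from linearity of the integral. For boundedness I would extend $f \in L^r(B)$ by zero to $\R^2$ and observe that, since $|\sigma\theta(\varphi) + w\theta(\varphi)^{\bot}|^2 = \sigma^2 + w^2$, the integrand defining $(Rf)(\varphi,\sigma)$ vanishes unless $|w| \le \sqrt{1-\sigma^2}$; thus $(Rf)(\varphi,\sigma)$ is really an integral over a chord of length $2\sqrt{1-\sigma^2}$, and vanishes for $|\sigma| > 1$. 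H\"older's inequality on that chord gives
\begin{align*}
    |(Rf)(\varphi,\sigma)|^s \le \left( 2\sqrt{1-\sigma^2} \right)^{s/r^*} \left( \int_{-\sqrt{1-\sigma^2}}^{\sqrt{1-\sigma^2}} \bigl| f\bigl(\sigma\theta(\varphi)+w\theta(\varphi)^{\bot}\bigr) \bigr|^r \dw \right)^{s/r} .
\end{align*}
After multiplying by the weight $(1-\sigma^2)^{-(s-1)/2}$, the exponent of $(1-\sigma^2)$ collected in front equals $\tfrac{1}{2}\bigl( \tfrac{s}{r^*} - (s-1) \bigr) = \tfrac{1}{2}\bigl( 1 - \tfrac{s}{r} \bigr) \ge 0$, precisely because $s \le r$, so this factor is bounded by $1$ on $[-1,1]$; getting this exponent right is the main (if minor) obstacle.

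Next I would integrate in $\sigma$: since $t \mapsto t^{s/r}$ is concave (as $0 < s/r \le 1$), Jensen's inequality with respect to the probability measure $\tfrac{1}{2}\,\dsig$ on $[-1,1]$ yields
\begin{align*}
    \int_{-1}^1 \left( \int_{-\sqrt{1-\sigma^2}}^{\sqrt{1-\sigma^2}} \bigl| f\bigl(\sigma\theta(\varphi)+w\theta(\varphi)^{\bot}\bigr) \bigr|^r \dw \right)^{s/r} \dsig \le 2^{1-s/r} \left( \int_{-1}^1 \int_{-\sqrt{1-\sigma^2}}^{\sqrt{1-\sigma^2}} \bigl| f\bigl(\sigma\theta(\varphi)+w\theta(\varphi)^{\bot}\bigr) \bigr|^r \dw \dsig \right)^{s/r} ,
\end{align*}
and for each fixed $\varphi$ the substitution $\spatial = \sigma\theta(\varphi) + w\theta(\varphi)^{\bot}$ is an orthogonal, hence measure-preserving, change of variables, so the inner double integral equals $\| f \|_{L^r(B)}^r$, independently of $\varphi$. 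Integrating over $\varphi \in [0,\pi)$ then gives $\| Rf \|_W \le C\, \| f \|_{L^r(B)}$ with $C = \bigl( \pi\, 2^{\,s/r^* + 1 - s/r} \bigr)^{1/s}$, so (with $(Rf)(\varphi,\sigma)$ well defined for a.e.\ $(\varphi,\sigma)$ since $f \in L^1(B)$) the operator $R$ is bounded and linear as claimed.

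For the adjoint I would write $W = L^s(\mu)$ with $\mu = (1-\sigma^2)^{-(s-1)/2}\,\dsig\dphi$, so that $W^* = L^{s^*}(\mu)$ with pairing $\langle u, g \rangle_{W,W^*} = \int_0^\pi\!\int_{-1}^1 u\, g\, (1-\sigma^2)^{-(s-1)/2}\,\dsig\dphi$, while $(L^r(B))^* = L^{r^*}(B)$ by the classical duality of Lebesgue spaces. For $f \in L^r(B)$ and $g \in W^*$ I would insert the definition of $Rf$ into $\langle Rf, g\rangle_{W,W^*}$ and apply Fubini's theorem; its hypotheses hold because applying the boundedness estimate above to $|f|$ gives $R|f| \in W$, so the triple integral of $\bigl| f\bigl(\sigma\theta(\varphi)+w\theta(\varphi)^{\bot}\bigr) \bigr|\, |g(\varphi,\sigma)|\, (1-\sigma^2)^{-(s-1)/2}$ equals $\langle R|f|, |g|\rangle_{W,W^*} \le \| R|f| \|_W \, \| g \|_{W^*} < \infty$ by H\"older on $W$. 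Then, for each fixed $\varphi$, the same orthogonal change of variables $\spatial = \sigma\theta(\varphi) + w\theta(\varphi)^{\bot}$ (so $\sigma = \spatial\cdot\theta(\varphi)$) turns the $w$-integral into an integral of $f$ over $B$ and the weight into $\bigl(1 - (\spatial\cdot\theta(\varphi))^2\bigr)^{-(s-1)/2}$; interchanging the $\varphi$- and $\spatial$-integrations then gives
\begin{align*}
    \langle Rf, g\rangle_{W,W^*} = \int_B f(\spatial) \left( \int_0^\pi g\bigl(\spatial\cdot\theta(\varphi),\varphi\bigr) \bigl(1 - (\spatial\cdot\theta(\varphi))^2\bigr)^{-\frac{s-1}{2}} \dphi \right) \dx = \langle f, R^* g\rangle_{L^r(B),L^{r^*}(B)} ,
\end{align*}
which identifies $R^* g$ with the claimed formula; that $R^* g \in L^{r^*}(B)$ is then automatic from the boundedness of $R$. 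The only steps that need care are the exponent bookkeeping in the weighted $L^s$-estimate --- the one place where $s \le r$ is used --- and the Fubini/change-of-variables justification; everything else is routine.
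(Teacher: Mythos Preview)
Your proof is correct, but the boundedness argument takes a slightly different route from the paper's (commented-out) proof. The paper applies H\"older on the chord with exponents $(s,s^*)$, so that the factor $(2\sqrt{1-\sigma^2})^{s-1}$ cancels the weight $(1-\sigma^2)^{-(s-1)/2}$ \emph{exactly}; this yields $\|Rf\|_W \lesssim \|f\|_{L^s(B)}$, and only then is the hypothesis $s\le r$ used, via the embedding $L^r(B)\hookrightarrow L^s(B)$ on the bounded set $B$. You instead apply H\"older with exponents $(r,r^*)$ so that $\|f\|_{L^r(B)}$ appears directly, which leaves a residual weight with nonnegative exponent (this is where you use $s\le r$) and an outer power $s/r\le 1$ that you handle by Jensen. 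Both approaches are sound: the paper's version keeps the exponent bookkeeping cleaner and localises the use of $s\le r$ to a single embedding step, while yours is more self-contained (no embedding theorem) and you supply an explicit Fubini justification for the adjoint computation that the paper leaves implicit. The adjoint calculation itself is identical in both.
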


The proof is analogous to the case $r=s=2$ that can be found e.g. in~\cite[Chapter 2]{natterer}.

Now, we consider the dynamic Radon transform defined by $ (\dynR  \vartheta)(t) = \psi(t) = R \vartheta(t)$
for each $t \in [0,T]$.
In the following we denote the Lebesgue-Bochner spaces by $L^p(0,T;L^r(B)) =: \X$ and $ L^q(0,T;W) =: \Y$.
\begin{lemma}
    For $s \leq r$ and $q \leq p$ the dynamic operator $\dynR : \X \to \Y$ is linear and bounded and its adjoint $\dynR^*: \Y^* \to \X^*$ is given by
    \begin{align*}
        (\dynR^* \psi^*)(t) = \int_0^\pi \psi^*(t)(\spatial \cdot \theta(\varphi)) (1-(\spatial \cdot \theta(\varphi))^2)^{-\frac{s-1}{2}} \dphi . 
    \end{align*}
\end{lemma}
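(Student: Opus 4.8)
The plan is to lift the static result of Lemma~\ref{lem:staticRadon} to the Lebesgue--Bochner setting, using the embedding of Lemma~\ref{lem:embeddingBochner} to absorb the mismatch between the temporal exponents $p$ and $q$, and the Bochner duality of Theorem~\ref{th:dualityBochner} to pass from the pointwise-in-time operators $R$ and $R^*$ to $\dynR$ and $\dynR^*$. First I would settle well-definedness and linearity: for $\vartheta\in\X$ we have $\vartheta(t)\in L^r(B)$ for a.e.\ $t$, so $(\dynR\vartheta)(t)=R\vartheta(t)$ is meaningful; since $R$ is bounded and linear it maps a sequence of $\mu$-simple functions converging a.e.\ to $\vartheta$ into one converging a.e.\ to $\dynR\vartheta$, hence $t\mapsto R\vartheta(t)$ is strongly measurable, and linearity is inherited pointwise from $R$.

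For boundedness I would estimate, using $s\leq r$ and the operator-norm bound of $R$ from Lemma~\ref{lem:staticRadon},
\[
  \|\dynR\vartheta\|_{L^q(0,T;W)}
  = \left(\int_0^T \|R\vartheta(t)\|_W^q\dt\right)^{1/q}
  \lesssim \left(\int_0^T \|\vartheta(t)\|_{L^r(B)}^q\dt\right)^{1/q}
  = \|\vartheta\|_{L^q(0,T;L^r(B))},
\]
and then invoke the continuous embedding $L^p(0,T;L^r(B))\subseteq L^q(0,T;L^r(B))$ of Lemma~\ref{lem:embeddingBochner}, valid because $(0,T)$ has finite measure and $q\leq p$, to conclude $\|\dynR\vartheta\|_\Y\lesssim\|\vartheta\|_\X$.

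For the adjoint, since $L^r(B)$ and the weighted Lebesgue space $W$ are reflexive, Theorem~\ref{th:dualityBochner} identifies $\Y^*=L^{q^*}(0,T;W^*)$ and $\X^*=L^{p^*}(0,T;L^{r^*}(B))$, with the duality pairings realized as the time integral of the pointwise pairings. For $\vartheta\in\X$ and $\psi^*\in\Y^*$ this gives
\[
  \langle \dynR\vartheta,\psi^*\rangle_{\Y,\Y^*}
  = \int_0^T \langle R\vartheta(t),\psi^*(t)\rangle_{W,W^*}\dt
  = \int_0^T \langle \vartheta(t),R^*\psi^*(t)\rangle_{L^r(B),L^{r^*}(B)}\dt,
\]
where the second equality is the static adjoint identity of Lemma~\ref{lem:staticRadon} applied for a.e.\ $t$. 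By the Bochner pairing the last expression equals $\langle\vartheta,g\rangle_{\X,\X^*}$ with $g(t):=R^*(\psi^*(t))$; as this holds for every $\vartheta\in\X$ we conclude $\dynR^*\psi^*=g$, i.e.\ $(\dynR^*\psi^*)(t)=R^*(\psi^*(t))$, which is the claimed formula once the explicit expression for $R^*$ from Lemma~\ref{lem:staticRadon} is inserted. It remains only to verify $g\in\X^*$: boundedness of $R^*\colon W^*\to L^{r^*}(B)$ gives $g\in L^{q^*}(0,T;L^{r^*}(B))$, and since $q\leq p$ implies $p^*\leq q^*$, Lemma~\ref{lem:embeddingBochner} again yields $L^{q^*}(0,T;L^{r^*}(B))\subseteq L^{p^*}(0,T;L^{r^*}(B))=\X^*$.

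Every step is a transcription of the static lemma through the Bochner-space machinery assembled in Section~\ref{sec:geom}, so there is no genuine obstacle; the one point to watch is keeping the two exponent conditions straight: $s\leq r$ is what makes the static map and its adjoint well-defined and bounded, while $q\leq p$ is used for both temporal embeddings --- in the forward direction for $\dynR$ and, via $p^*\leq q^*$, in the dual direction for $\dynR^*$.
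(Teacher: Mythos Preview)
Your proposal is correct and follows essentially the same route as the paper: pointwise application of the static boundedness estimate followed by the temporal embedding of Lemma~\ref{lem:embeddingBochner}, and then the Bochner duality pairing to identify the adjoint via the static $R^*$. You add some care the paper omits (strong measurability of $t\mapsto R\vartheta(t)$ and explicit verification that $\dynR^*\psi^*\in\X^*$), but the argument is the same.
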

\begin{proof}
 As in the static case, the dynamic Radon operator $\dynR$ is clearly linear. We start by showing the boundedness of the operator. For $\vartheta \in \X$ we have
    \begin{align*}
            \lVert \dynR \vartheta \rVert_\Y 
            = \left( \int_0^T \lVert R \vartheta(t) \rVert_{W}^q \dt \right)^{\frac{1}{q}}
            \lesssim \left( \int_0^T \lVert \vartheta(t) \rVert_{L^r(B)}^q \dt \right)^{\frac{1}{q}}
            = \lVert \vartheta \rVert_{L^q(0,T;L^r(B))}
            \lesssim \lVert \vartheta \rVert_\X
    \end{align*}
    where the first inequality follows due to the boundedness of the static Radon operator~\ref{lem:staticRadon} and the last equality is an application of Lemma~\ref{lem:embeddingBochner}.
    
    Now, we compute the adjoint operator for $\vartheta \in \X$ and $\psi^* \in \Y^*$ by
    \begin{align*}
        \langle \dynR \vartheta, \psi^* \rangle_{\Y,\Y^*}
        = \int_0^T \langle R \vartheta(t), \psi^*(t) \rangle_{W,W^*} \dt
        = \int_0^T \langle \vartheta(t), R^* \psi^*(t) \rangle_{L^r(B), L^{r^*}(B)} \dt
        = \langle \vartheta, \dynR^* \psi^* \rangle_{\X,\X^*}
    \end{align*}
    using the adjoint static operator from Lemma~\ref{lem:staticRadon}.
\end{proof}

\subsubsection{Discretization of the operator}
\label{sec:discrOperator}

The continuous Radon transform can be discretized for different measurement geometries. Here, we consider two cases for later applications.  
In the first case, the object is scanned for each time step from the same angles, meaning that the tomograph is not rotating and we only have projections from a small set of angles. In the second case, the tomograph is rotated such that the projection angles are different in each time step, but still equidistantly spaced. In both cases, we discretize the norm of $W$, see~\eqref{eq:normW}, for $g \in \R^{N\times M}$ as
\begin{align*}
   \lVert g \rVert_{\bar{W}}^s \coloneqq
   2\pi h_\sigma h_\theta \sum_{i=1}^N \sum_{j=1}^M \lvert g_{i,j} \rvert^s (1-\sigma_i^2)^{-\frac{s-1}{2}} 
\end{align*}
for $N$ equidistantly measured offset values $\sigma_i$ discretizing the sensor with distance $h_s$ and $M$ equidistantly measured angles with distance $h_\theta$.

Since in the second case the measurement geometry changes for each time step, the indices $j$ of the data $g_{i,j}$ denote different angles. This does not make a difference for the computation of the norm, but for the computation of the forward and adjoint operator we need to take into account from which angles the object was measured at which time steps.

Of course it would also be possible to use angles that are not equidistantly spaced. However, then we would need to choose a different discretization that approximates the integral for non-equidistant support points.

In the following, we consider the second case for the simulated phantoms: If the first time-step is measured from angles $1,21,41,\ldots$, the second time-step is measured from angles $2,22,42,\ldots$ and so on which means that the object is scanned from each angle only once. In this case we have $20$ time-steps in total. The total number of angles is equidistantly spaced between $0$ and $\pi$.

For the emoji data set, we consider the first case, where the measurement angles do not change.

\subsection{Test data}
\label{sec:Testdata}
\subsubsection{Simulated phantom: intensity preserving} 
\label{sec:intensityPreservingPhantom}

We start by applying our methods to a simulated intensity preserving phantom with values in $[0,1]$. The phantom consists of two rectangles and a circle, which are stretched vertically by the factor $1.5$ with constant speed, see Figure~\ref{fig:groundtruthSimulated_intensity}.
With the notation of Section~\ref{paragraph:timeDerivativeSpaceH-1} 
the object at time $t=0$ can be seen as the function $\statObj(\spatial)$ which is undergoing the motion $\Gamma(t,\cdot) = \Phi^{-1}(t,\cdot)$ with $\Phi(t,\spatial_1,\spatial_2) = (\spatial_1, (1+ \frac{t}{2}) \spatial_2)^{\top}$ for $t \in [0,1]$. Thus the time-dependent function $\vartheta$ is given by $\vartheta(t,\spatial) = \statObj(\Gamma(t,\spatial))$ and the velocity is $V(t,\spatial) = \partial_t \Phi(t,\spatial) = (0,\frac{\spatial_2}{2})^\top$ for $t \in (0,1)$. 

To apply Algorithm~\ref{algo:timeDerivative}, we have to verify that the time-dependent function $\vartheta$ we are considering lies in the space
$\X = L^2(0,T;L^2(\Omega)) \cap H^1(0,T;H_0^{-1}(\Omega))$.

We see that $V$ is constant in time and in the horizontal component but not in the vertical component. Clearly it holds $V \in L^\infty(0,T;L^\infty(\Omega))$ and the movement preserves intensity. Thus we get with Lemma~\ref{lem:timeDerInH-1} that $\vartheta \in \X$.
The phantom consists of $20$ equidistant time steps in $[0,1]$ with a spatial resolution of $41 \times 41$ pixels.

\begin{figure}[htbp]
    \centering
    \renewcommand{\thesubfigure}{\arabic{subfigure}}
    \setcounter{subfigure}{1}
    
    \begin{subfigure}{0.15\textwidth}
        \includegraphics[width=\textwidth]{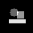}
        \caption{$t=0$}
    \end{subfigure}
    \begin{subfigure}{0.15\textwidth}
        \includegraphics[width=\textwidth]{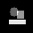}
        \caption{$t=5$}
    \end{subfigure}
    \begin{subfigure}{0.15\textwidth}
        \includegraphics[width=\textwidth]{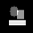}
        \caption{$t=10$}
    \end{subfigure}
    \begin{subfigure}{0.15\textwidth}
        \includegraphics[width=\textwidth]{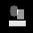}
        \caption{$t=15$}
    \end{subfigure}
    \begin{subfigure}{0.15\textwidth}
        \includegraphics[width=\textwidth]{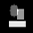}
        \caption{$t=19$}
    \end{subfigure}
\caption{time steps $0,5,10,15,19$ for the ground truth of the intensity preserving phantom}
\label{fig:groundtruthSimulated_intensity}
\end{figure}

After computing the sinogram as described in Section~\ref{sec:discrOperator} for $7$ angles per time-step and $40$ offset values equidistantly spaced in $[-1,1]$, we add Gaussian noise with a mean of $0$ and a standard deviation of $0.05$. To prevent committing an inverse crime, we compute our reconstruction with a spatial resolution of $33 \times 33$ pixels. We use a low resolution to allow running a large number of experiments for the dual method~\ref{alg:dualMethod}. The total computation time was around three days.
For comparison, we use the same resolution for our proposed method~\ref{algo:timeDerivative} which considers the time-derivative even if the computation time is faster.

\subsubsection{Simulated phantom: mass preserving} 
\label{sec:massPreservingPhantom}

The second phantom we are considering preserves mass. It consists of a static circle and a moving rectangle that changes size and intensity, see Figure~\ref{fig:groundtruthSimulated_mass}. The phantom consists of 20 equidistant time steps in $[0,1]$ with a spatial resolution of $83 \times 83$ pixels. After computing the sinogram as described above for 7 angles per time-step and $160$ offset values equidistantly spaced in $[-1,1]$, we add Gaussian noise with a mean of $0$ and a standard deviation of $0.05$. To prevent committing an inverse crime, we compute our reconstruction with a spatial resolution of $61 \times 61$ pixels. Here, we choose a higher resolution since the phantom is indistinguishable in the small resolution we used for the intensity preserving phantom above. Since we only apply method~\ref{algo:timeDerivative} to this phantom, the higher resolution poses no problem.

\begin{figure}[htbp]
    \centering
    \renewcommand{\thesubfigure}{\arabic{subfigure}}
    \setcounter{subfigure}{1}
    
    \begin{subfigure}{0.15\textwidth}
        \includegraphics[width=\textwidth]{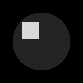}
        \caption{$t=0$}
    \end{subfigure}
    \begin{subfigure}{0.15\textwidth}
        \includegraphics[width=\textwidth]{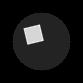}
        \caption{$t=5$}
    \end{subfigure}
    \begin{subfigure}{0.15\textwidth}
        \includegraphics[width=\textwidth]{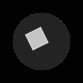}
        \caption{$t=10$}
    \end{subfigure}
    \begin{subfigure}{0.15\textwidth}
        \includegraphics[width=\textwidth]{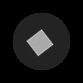}
        \caption{$t=15$}
    \end{subfigure}
    \begin{subfigure}{0.15\textwidth}
        \includegraphics[width=\textwidth]{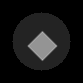}
        \caption{$t=19$}
    \end{subfigure}
\caption{time steps $0,5,10,15,19$ for the ground truth of the mass preserving phantom}
\label{fig:groundtruthSimulated_mass}
\end{figure}

\vspace{-3ex}
\subsubsection{Emoji data set}
\label{sec:emojiData}

Next, we apply our method to measured data consisting of an emoji evolving over time~\cite{EmojiDataset, EmojiArticle}. To use the measured data for our method, we first convert the fan-beam sinograms to parallel-beam sinograms using the inbuilt MATLAB rebinning algorithm \texttt{fan2para} where $D$ is the distance from the fan-beam vertex to the center of rotation (origin) in image pixels. This can be chosen as $D = D_\mathrm{fo}/d_\mathrm{eff}$, where $D_\mathrm{fo}$ is the focus-to-origin distance and $d_\mathrm{eff} = d/M$ is the effective pixel size, taking into account the geometric magnification $M$ of the fan-beam and the detector pixel size $d$. The magnification is given by $M = D_\mathrm{fd}/ D_\mathrm{fo}$, where $D_\mathrm{fd}$ is the focus-to-detector distance. From~\cite{EmojiArticle} we know that $D_\mathrm{fo}$ is $540$ mm and $D_\mathrm{fd}$ is $630$ mm. The size of one physical detector pixel in the custom-made measurement device at the University of Helsinki is $0.05$ mm. We assume that first some of the pixels have been cut off in order to compensate for a misaligned center of rotation, and then the resulting sinogram was binned with factor 10, resulting in a detector pixel size $d$ of $0.5$ mm in the emoji data. This results in an effective pixel size of $\frac{D_\mathrm{fo}}{ D_\mathrm{fd}} \cdot 0.5 \: \mathrm{mm} = \frac{540}{630} \cdot 0.5 \: \mathrm{mm} \approx 0.429$ mm. Note that although $D_\mathrm{fo}$ technically cancels out when computing $D$, it must still be known to attach a physical meaning to the pixel size in the reconstruction.


 For each of the $15$ different emoji faces we get one time step. For each time-step we use measurements from the same $5$ equidistantly spaced angles. We estimate the noise of the measured data by averaging over a part of the sinogram with very small values. To get values that are smaller than $1$ in the reconstruction, we scale the sinogram by $1/7.5$.

\begin{figure}[H]
\vspace{-1ex}
\centering
    \includegraphics[width=0.68\textwidth]{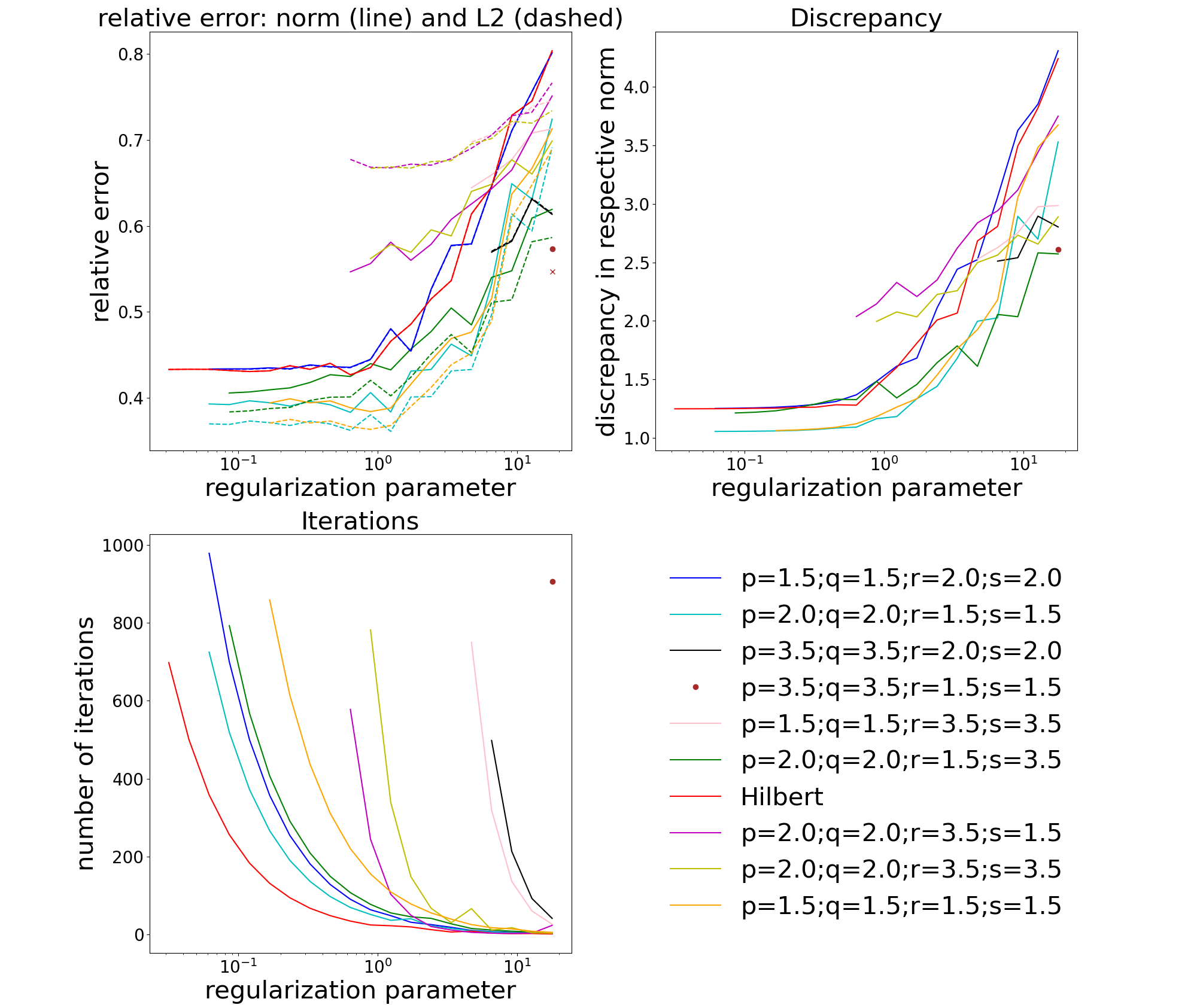}
    \caption{ \textbf{Experiment description:} Tikhonov regularization for different dynamic Banach space settings. The relative error w.r.t.~the groundtruth in the respective norm and in the $L^2$-norm, the discrepancy and the number of iterations are displayed depending on the regularization parameter. 1000 iterations take approximately 30 minutes computing time on a regular laptop. \\
    The iteration is stopped as described in Section~\ref{sec:dynamicSetting}. Only those results are displayed where the iteration is stopped before 1000 iteration. 
    In the setting $p=q=3.5$ and $r=s=1.5$ the algorithm converges only with the last regularization parameter in less than 1000 iterations. This is indicated with the dot for the error in the respective norm, the discrepancy and the iterations. The cross indicates the $L^2$-error.
  }
    \label{fig:BanachPlots}
\end{figure}

\subsection{Tikhonov regularization with power-type penalties in Banach spaces}
\label{sec:DualMethodNumerics}

First, we consider Tikhonov regularization in Banach spaces where we compare two solution approaches and apply them to the intensity preserving phantom, see Section~\ref{sec:intensityPreservingPhantom}: First, solving the dynamic minimization problem in Lebesgue-Bochner spaces, which we will refer to as 'dynamic setting', and second, solving the minimization problems in Lebesgue spaces for each measured time-point separately, which we will refer to as 'static setting'. In both cases we do not assume any knowledge about the motion.

By comparing these two approaches, we want to examine how the inclusion of time impacts the regularized solution.

We aim at minimizing the Tikhonov functional 
\begin{align*}
    T_\alpha(\vartheta) \coloneqq \frac{1}{u} \lVert A \vartheta - \psi \rVert_\Y^u + \alpha \frac{1}{v} \lVert \vartheta \rVert_\X^v .
\end{align*}
Since the inverse problem of (dynamic) computerized tomography is linear, the minimizers of the Tikhonov functional are well-defined and thus provide a regularization method, see Section~\ref{sec:TikhonovDual}.

For noisy measurement data $\psi^\delta$ with noise level $\delta$, such that $\|\psi - \psi^{\delta}\| \leq \delta$, we cannot expect that the stopping criterion $0 \in \partial T_\alpha(\vartheta_n)$ in Algorithm~\ref{alg:dualMethod} for exact data will be fulfilled. Therefore we need a different stopping criterion. Here, the iterations are stopped when either the data discrepancy $\| A \vartheta_k - \psi \|$
    does not decrease further and starts increasing or when a fixed tolerance is reached, i.e.,
    \begin{align}
    \label{eq:discrepancyTikhonov}
         \| A \vartheta_k - \psi \| \leq \rho \delta,
    \end{align}
    with a fixed $\rho > 1$. In this section, we chose $\rho = 1.05$. Furthermore, for computational reasons we only consider reconstructions that stop with these criterion after a maximum of 1000 iterations.

\subsubsection{Dynamic setting}
\label{sec:dynamicSetting}
In the dynamic setting we choose
\begin{align}
    \label{eq:functionSpacesTikhonov}
    \X = L^p(0,T;L^r(B)), \quad \Y = L^q\left(0,T;L^s\left([-1,1] \times [0, \pi), (1-\sigma^2)^{-\frac{s-1}{2}}\right) \right)
\end{align}
and execute the algorithm for different choices of the exponents $p,q,r,s>1$.
In accordance with the convergence rates from Section~\ref{sec:TikhonovDual}, we choose 
  $  v = \max(2,p,r)$ and $u = \max(2,q,s)$.

Each Banach space setting is considered for different regularization parameters. 
Figure~\ref{fig:BanachReco} shows, for each setting, 
a reconstruction obtained using the regularization parameter that yields the smallest discrepancy. 
In Figure~\ref{fig:BanachPlots} the number of iterations, the error and the discrepancy in the respective norm depending on the regularization parameter for the different Banach space settings are displayed.

\subsubsection{Static setting}
\label{sec:staticSetting}
In the static setting, we reconstruct each time-step separately and choose the spaces
\begin{align}
    \label{eq:functionSpacesTikhonov_static}
    \X = L^r(B), \quad \Y = L^s\left([-1,1] \times [0, \pi), (1-\sigma^2)^{-\frac{s-1}{2}}\right)
\end{align}
for different choices of $r$ and $s$. 
As above, we choose 
   $ v = \max(2,r)$ and $ u = \max(2,s)$ in accordance with the convergence rates from Section~\ref{sec:TikhonovDual}.
   
We chose the same regularization parameter for all time steps. 
In Figure~\ref{fig:Banach_static} a reconstruction for each setting can be seen where the regularization parameter that yields the smallest discrepancy is chosen. Here we compute the dynamic discrepancy, i.e., the discrepancy for the dynamic operator in the Lebesgue-Bochner $L^2$-norm. Additionally, the dynamic error, dynamic discrepancy and the mean of the number of static iterations are shown depending on the regularization parameters.

\subsection{Temporal variational regularization}
\label{sec:TimeDerivativeNumerics}

Next, we apply 
Algorithm~\ref{algo:timeDerivative} that penalizes the time derivative to the two simulated phantoms from Sections~\ref{sec:intensityPreservingPhantom} and~\ref{sec:massPreservingPhantom} and the emoji data set from Section~\ref{sec:emojiData}. 
For the discretization of the cosine transform we use the fast discrete cosine transform (DCT).

\subsubsection{Intensity preserving phantom} We chose the best regularization parameters $\alpha$ and $\beta$ by doing a grid search and comparing the $L^2$-error. For each set of parameters, the iteration is stopped as soon as the discrepancy gets larger.
However, we note that the error is lowest when choosing $\alpha = 0$. In Figure~\ref{fig:intensity}, the minimal reached discrepancy and error for different choices of $\beta$ and three different $H_0^{-1}$-norms~\eqref{eq:H-1GammaNorm} determined by $\gamma \in \{0.1,1,10\}$ are displayed.
Table~\ref{tab:FourierResult_intensity} and Figure~\ref{fig:intensity} show error, discrepancy, and iteration counts for reconstructions with the best regularization parameter for each $\gamma$, compared to FBP and Landweber, i.e. $\alpha = \beta = 0$.
Penalizing the time derivative leads to a reduction of noise compared to the Landweber iteration. However, the different choices of the $H^{-1}_0$-norm do not make a large impact on the image quality since the best regularization parameter is similar in all three cases. For $\gamma = 10$ the reconstruction error is slightly lower with a smaller regularization parameter.

\begin{figure}[H]
\centering
    \includegraphics[width=0.75\textwidth]{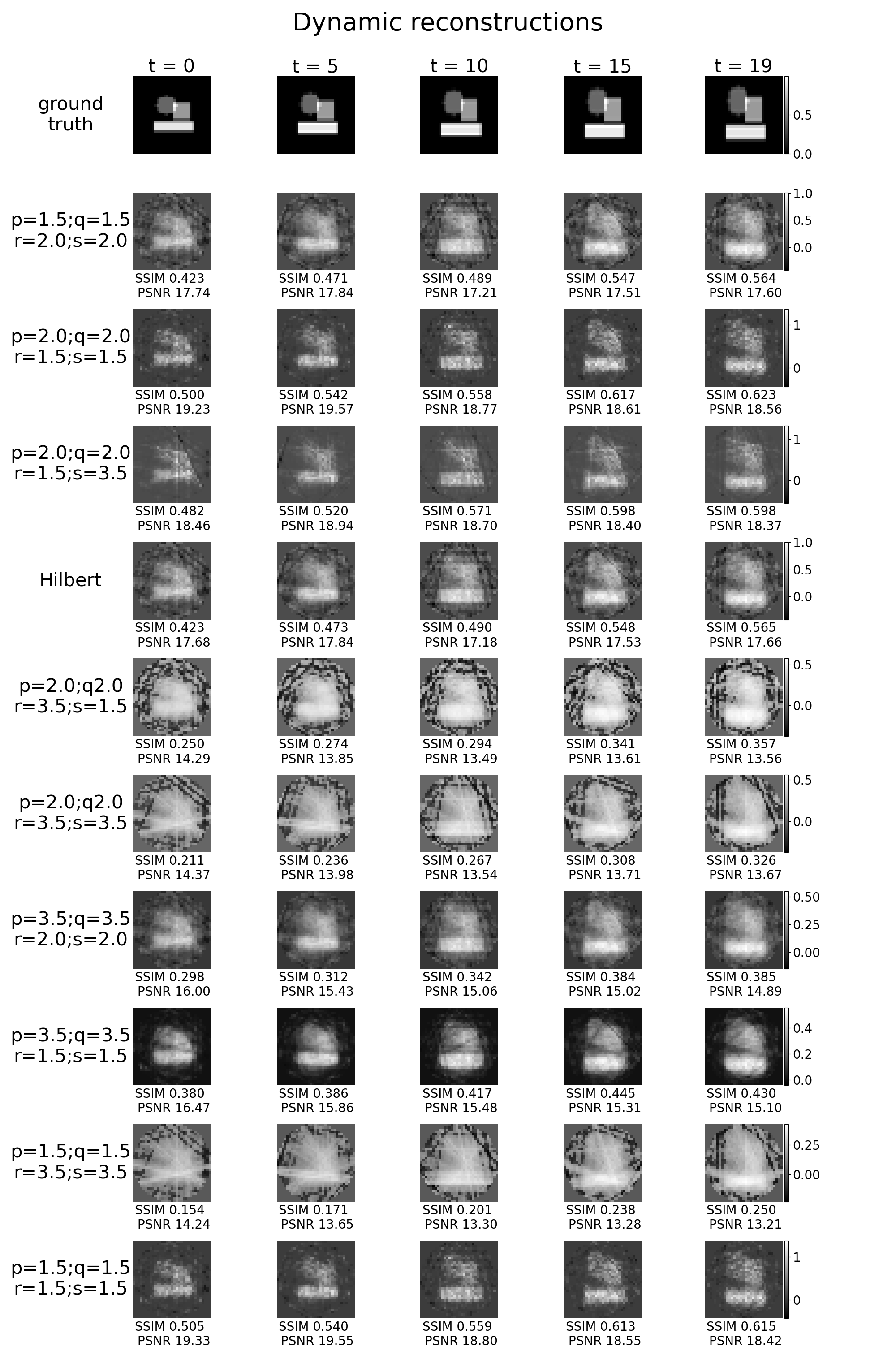}
    \caption{ \textbf{EXPERIMENT DESCRIPTION:} Tikhonov regularization for different dynamic Banach space settings. The regularization parameter is chosen such that for each setting the discrepancy is minimized, see also Figure~\ref{fig:BanachPlots}. }
    \label{fig:BanachReco}
\end{figure}

\begin{figure}[H]
\centering
\vspace{-2ex}
    \begin{subfigure}[b]{0.7\textwidth}
        \centering
        \includegraphics[width=\textwidth]{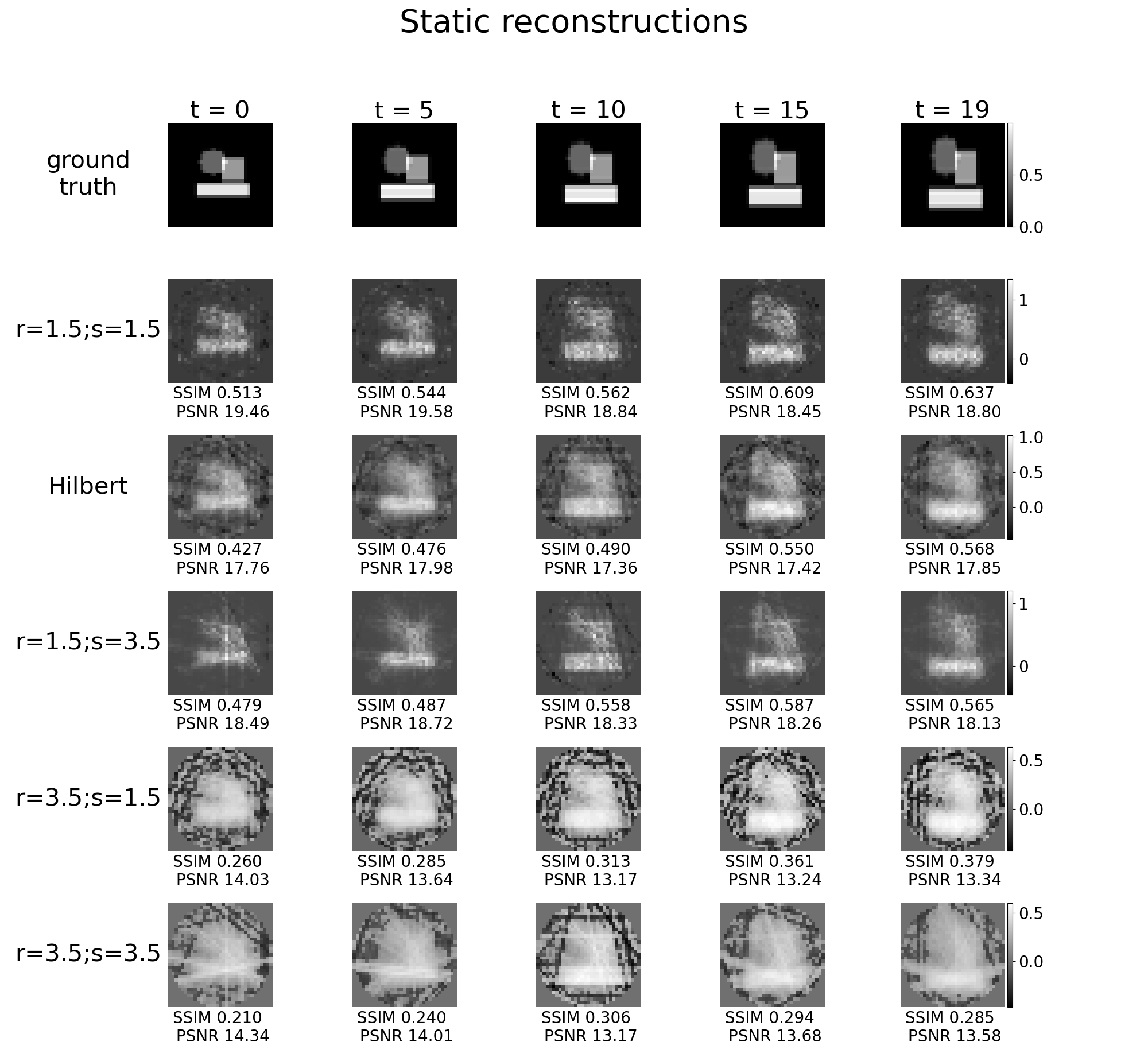}
        \label{fig:BanachReco_static}
    \end{subfigure}
    
    \vspace{-2ex} 

    \begin{subfigure}[b]{0.65\textwidth}
        \centering
        \includegraphics[width=\textwidth]{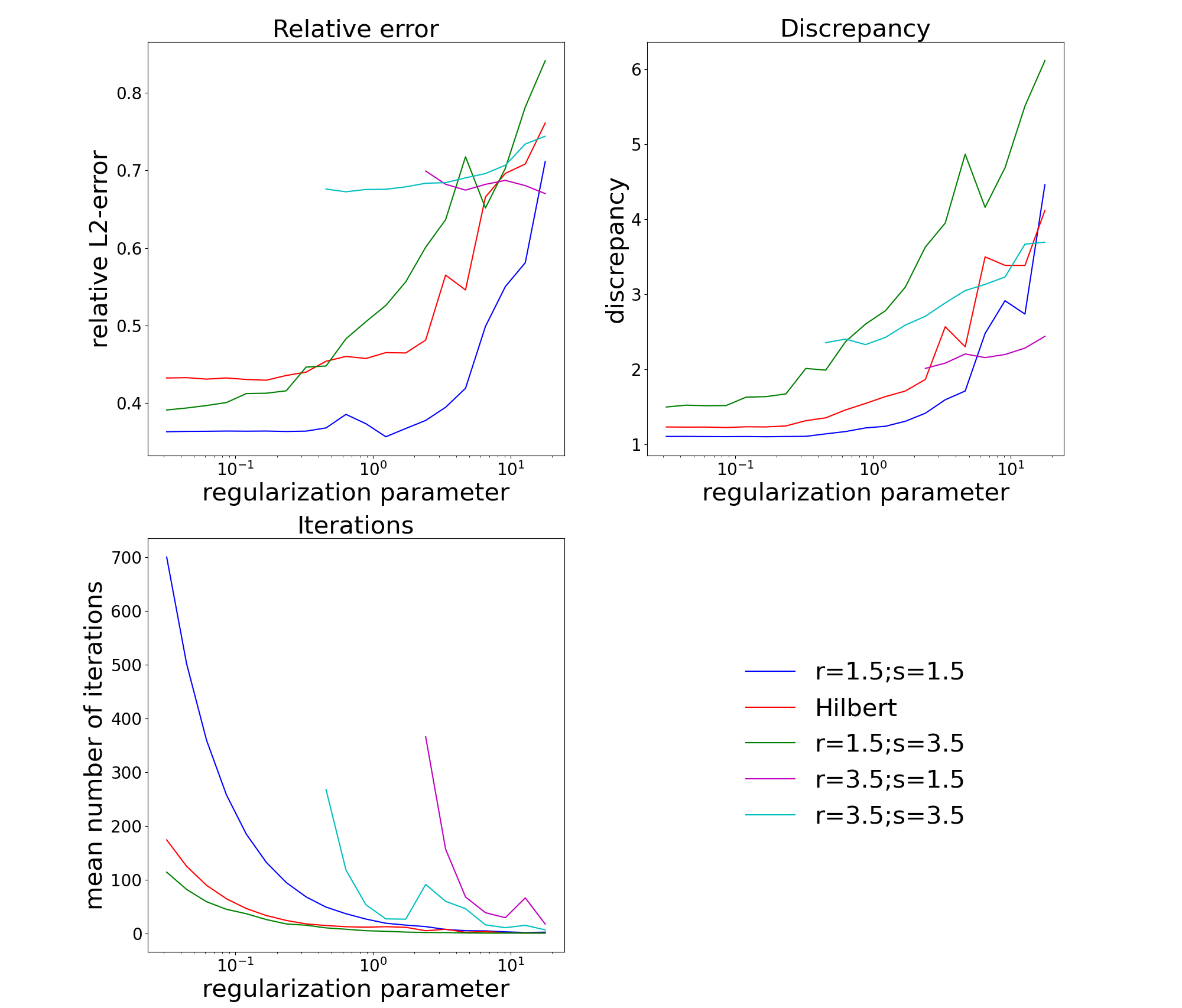}
        \label{fig:BanachPlots_static}
    \end{subfigure}
    \vspace{-2ex}
    \caption{\textbf{Experiment description:} Tikhonov regularization for different static Banach space settings. The regularization parameter has the same value for all time steps and is chosen such that for each setting the discrepancy is minimized. Reconstructions for different settings (top); Relative error in $L^2$-norm, discrepancy, and mean number of iterations depending on the regularization parameter (bottom). Results are shown only if the iteration stops before reaching 1000 steps which take approximately 30 minutes computing time on a regular laptop, see Section~\ref{sec:staticSetting} for stopping criteria.}
    \label{fig:Banach_static}
\end{figure}

\begin{figure}[H]
\centering
\vspace{-1ex}
    \begin{subfigure}[b]{0.85\textwidth}
        \centering
        \includegraphics[width=\textwidth]{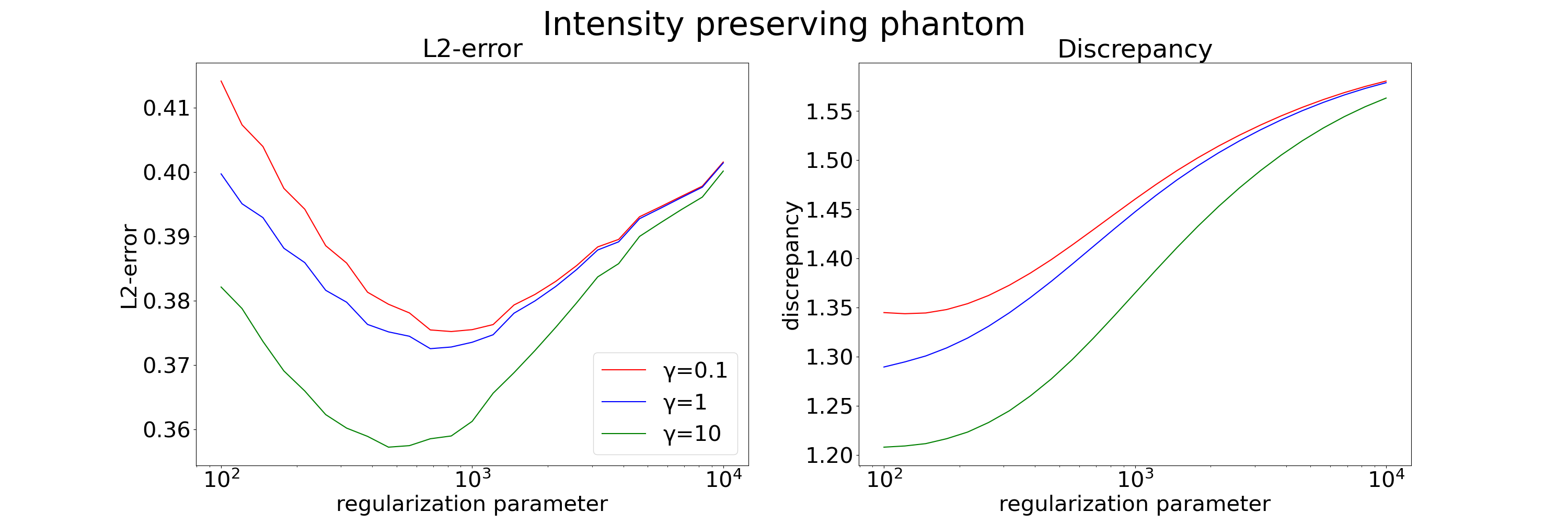}
    \end{subfigure}
    

    \begin{subfigure}[b]{0.7\textwidth}
        \centering
    \includegraphics[width=\textwidth]{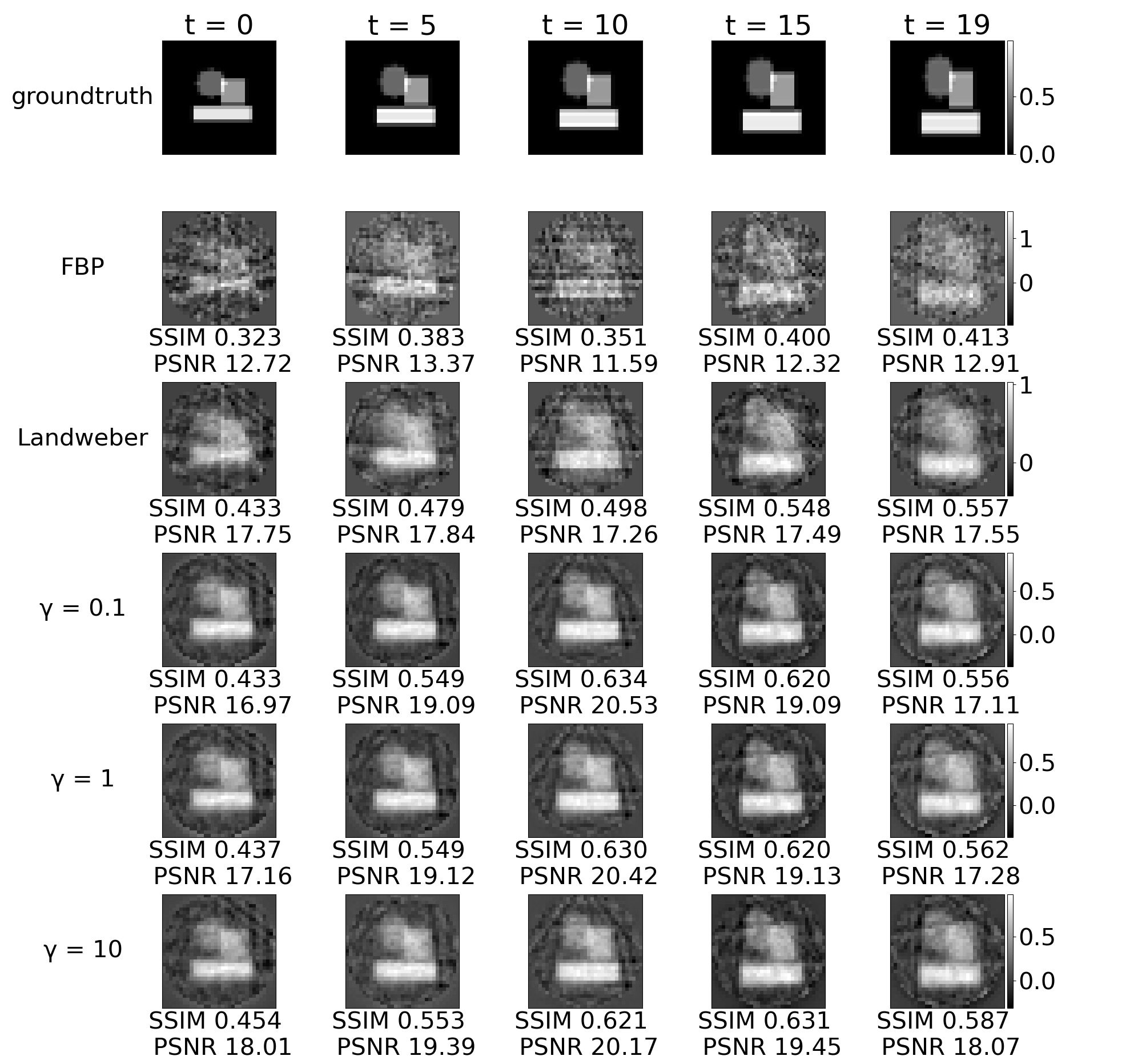}
    \end{subfigure}
    \vspace{-1ex}
    \caption{\textbf{Experiment description:} \textbf{Top:} Minimal reached discrepancy (right) and corresponding error (left) for different choices of regularization parameters $\beta$ and $H^{-1}_0$-norm determined by $\gamma$ for the intensity preserving phantom. \textbf{Bottom:}     Reconstructions for the intensity preserving phantom using FBP, Landweber and  Algorithm~\ref{algo:timeDerivative} with
     $\gamma = 0.1$ and $\beta = 825$,
      $\gamma = 1$ and $\beta = 825$ and
      $\gamma = 10$ and $\beta = 681$,
    at time points $t = 0,5,10,15,19$ for $7$ equidistantly spaced rotating angles per time step.}
    \label{fig:intensity}
\end{figure}

\vspace{-2ex}

\begin{table}[h]
    \centering
    \begin{tabular}{|c|c|c|c|c|c|}
        \hline
         &  FBP & Landweber  & $\gamma = 0.1$ & $\gamma = 1$ &  $\bm{\gamma = 10}$ \\
         \hline

        $L^2$-error & $75.63 \%$ & $ 42.71 \%$ & $ 37.52 \%$ & $37.25 \%$ &  $\bm{35.72 \%}$\\
        \hline
        mean SSIM & 0.3852 & 0.5068 & 0.5738 & 0.5739 & $\bm{0.5803} $\\
        \hline
        mean PSNR & 12.85 & 17.66 & 19.02 & 19.07 & $\bm{19.37}$ \\
        \hline
        discrepancy & $2.497$  & $1.265$ & $ 1.445 $ & $1.412$ & $1.277$\\
        \hline
        iterations &  -- & $11$ & $ 21 $ & $ 21 $ & $19$\\
        \hline
        $\beta$ & -- & 0 & $825.4$ & $681.3$ &  $464.2$\\
        \hline
    \end{tabular}
    \caption{Error and discrepancy for the different methods applied to the intensity preserving phantom. For each $\gamma$ the regularization parameter with the smallest error was chosen. The regularization method with the best results is highlighted in bold. $\alpha$ is always 0.}
    \label{tab:FourierResult_intensity}
\end{table}

\subsubsection{Mass preserving phantom} For the mass preserving phantom, we stop the iteration according to the discrepancy principle~\eqref{eq:discrepancyTimeDer} with $\rho = 1.1$ or if the discrepancy starts to get larger. Since this threshold is reached for several regularization parameters, the graph describing the discrepancy in Figure~\ref{fig:mass} is non-smooth and we do not state the values of the discrepancy in Table~\ref{tab:FourierResult_mass}. As in the intensity preserving case, the spatial regularization parameter $\alpha$ has no positive effect on the reconstruction. The reconstructions can be seen in Figure~\ref{fig:mass}.

\begin{table}[h]
    \centering
    \begin{tabular}{|c|c|c|c|c|c|}
        \hline
         &  FBP & Landweber  & $\gamma = 0.1$ & $\gamma = 1$ &  $\bm{\gamma = 10}$ \\
         \hline

        $L^2$-error & $365.86 \%$ & $ 42.78 \%$ & $ 41.87 \%$ & $ 41.8 \%$ &  $\bm{41.39 \%}$\\
        \hline
        mean SSIM & 0.0426 & 0.3701 & 0.3859 & 0.3881 & $\bm{0.4064} $\\
        \hline
        mean PSNR & 3.405 & 22.08 & 22.27 & 22.29 & $\bm{22.38}$ \\
        \hline
        iterations &  -- & $6$ & $ 7 $ & $ 7 $ & $7$\\
        \hline
        $\beta$ & -- & 0 & $133.4$ & $133.4$ &  $177.8$\\
        \hline
    \end{tabular}
    \caption{$L^2$-error for the different methods applied to the mass preserving phantom. For each $\gamma$ the regularization parameter with the smallest error was chosen. The regularization method with the best results is highlighted in bold. $\alpha$ is always 0.}
    \label{tab:FourierResult_mass}
\end{table}

\vspace{-2ex}
\subsubsection{Emoji data}
Finally, we apply our method to the emoji data set. Since the reconstruction contained many negative values, we added a non-negativity constraint to the reconstruction algorithm~\ref{algo:timeDerivative}, i.e., after step~\eqref{eq:algo_endStep} we set negative values to $0$. The results are shown in Figure~\ref{fig:CTrecoEmoji} where we observe a similar reconstruction quality of Landweber iteration and Algorithm~\ref{algo:timeDerivative}.

\subsection{Discussion of results}

For the dual method~\ref{alg:dualMethod} in the static setting, i.e. reconstructing each time-step separately, we observe sparse but noisy reconstructions for $\X = L^r(B)$ with $r < 2$. The choice of the exponent for the space $\Y$ seems to have less impact on the reconstruction quality. For $r > 2$ we observe smoother reconstructions. However, here we used higher regularization parameters since the method does not converge in reasonable time (1000 iterations). 

The reconstructions obtained in the dynamic setting look quite similar, but we get more variety by choosing different exponents for time and space. We observe that the method does not converge for smaller regularization parameters if a value larger than 2 is used in the space $\X = L^p(0,T;L^r(B))$ for $p$ or $r$. Even for smaller exponents the method takes longer to converge than in the static setting. A reason for this could be the different computation of the data discrepancy. In the static setting, the algorithm is stopped as soon as the data discrepancy for a single time point is not getting smaller. In the dynamic setting, however, the iteration is only stopped when the data discrepancy for all time points computed in the Lebesgue-Bochner norm is not getting smaller. This explains the different behavior even for Hilbert spaces which leads to the same minimization problem.

We further observe that it is hard to compare the different reconstructions in quality since they are so different. The dynamic reconstruction with $p=q=3.5$ and $r=s=1.5$ uses a high exponent for the regularity in time and a small one for the spatial regularity. We observe little noise which might result from the high regularity in time and some sparsity in the spatial domain. If we compare this with the reconstruction for $p=q=1.5$ and $r=s=1.5$ where we incorporate sparsity both in time and space, we observe more noise but only slightly sparser reconstructions in time.

For Algorithm~\ref{algo:timeDerivative} we observe a different level of improvement for the intensity- and mass-preserving phantoms. While in the intensity preserving case, penalizing the time derivative improves the Landweber reconstruction noticeably, in the mass preserving case we only observe a small improvement in the relative error. The emoji data set is also mass preserving. Here we do not know the error due to a missing ground truth and do not observe noticeable improvements in the reconstruction compared to Landweber.

Compared to Tikhonov regularization, Algorithm~\ref{algo:timeDerivative} is much faster and yields better results. This is not surprising since Hilbert space algorithms are faster in each time step and we use a regularization term that incorporates the time-dependence of the phantom.

\begin{figure}[H]
\centering
\vspace{-2ex}
    \begin{subfigure}[b]{0.85\textwidth}
        \centering
        \includegraphics[width=\textwidth]{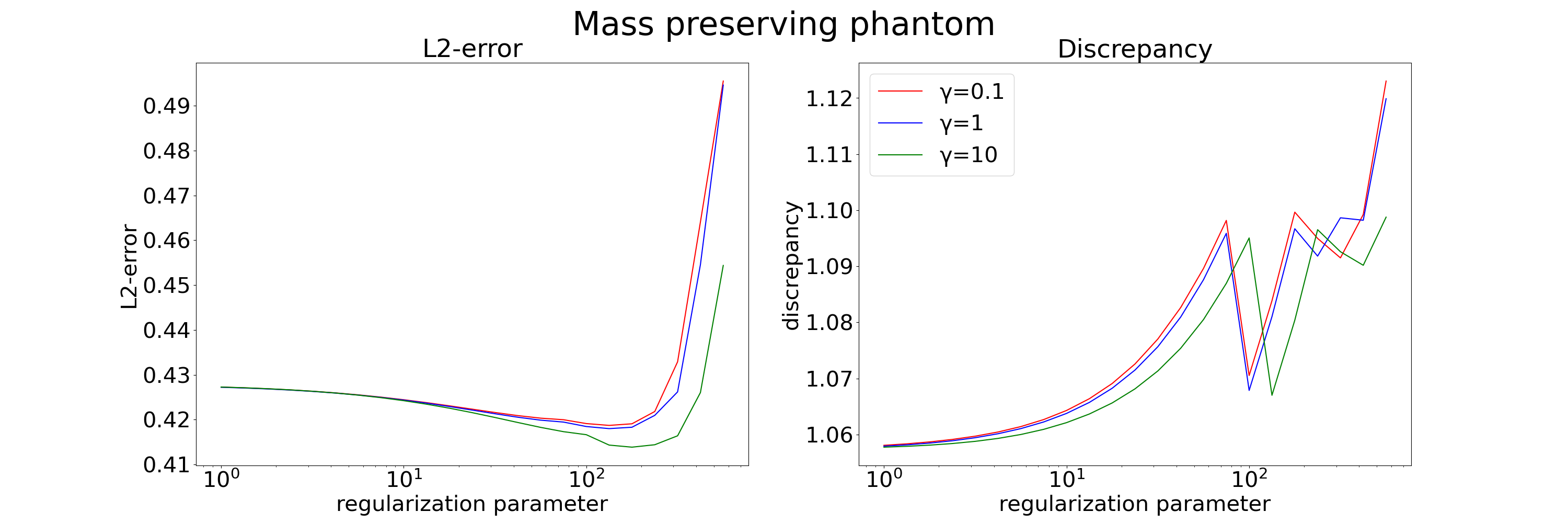}
    \end{subfigure}
    

    \begin{subfigure}[b]{0.7\textwidth}
        \centering
    \includegraphics[width=\textwidth]{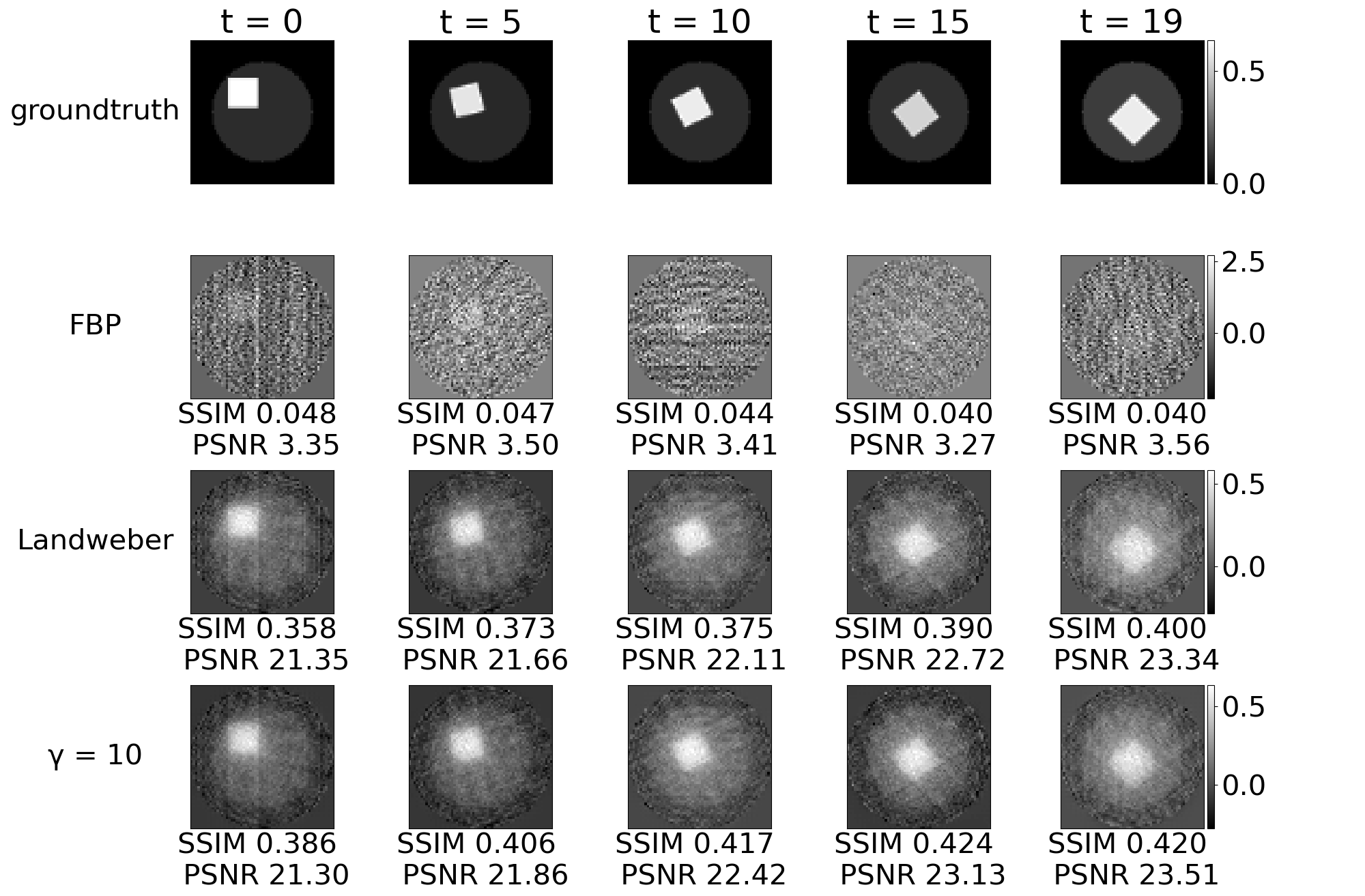}
    \end{subfigure}
    \vspace{-1ex}
    \caption{\textbf{Experiment description:} Temporal variational regularization for the mass preserving phantom for $7$ equidistantly spaced rotating angles per time step. \textbf{Top:} $L^2$-error and discrepancy for different choices of regularization parameters $\beta$ and $H^{-1}_0$-norm determined by $\gamma$. \textbf{Bottom:} Reconstructions using FBP, Landweber and Algorithm~\ref{algo:timeDerivative} with $\gamma = 10$ and $\beta = 133$ at time points $t = 0,5,10,15,19$.}
    \label{fig:mass}
\end{figure}

\section{Conclusion and outlook}
In this article we show geometric properties of Lebesgue-Bochner spaces and discuss two possibilities to include Lebesgue-Bochner spaces in the function space setting of regularization methods for time-dependent inverse problems. First, we adapt Tikhonov regularization in Banach spaces to Lebesgue-Bochner spaces and second, we solve a variational problem penalizing the time derivative. To support our findings, we apply these algorithms to dynamic computerized tomography which serves as an example problem.

Further research objectives include the analysis of different stopping criteria to obtain convergence for both algorithms and investigations of choices of the exponents of the Lebesgue-Bochner space in the dual method~\ref{alg:dualMethod} solving the Tikhonov functional. Furthermore, the methods could be extended to nonlinear or parameter identification problems.

By combining the penalization of the time derivative with the abstract framework for parabolic parameter identification problems~\cite{kaltenbacher17}, we could extend our method~\ref{algo:timeDerivative} to this class of problems and Sobolev-Bochner spaces, a natural extension of classical Sobolev spaces to time-dependent functions.

\paragraph{Data availability statement}
The code and data that support the findings of this article are openly available on GRO.data~\cite{GRODATA} at \url{https://doi.org/10.25625/0RNNXC}.

\vspace{-1ex}
\paragraph{Acknowledgements}
The authors thank T.~Heikkilä, A.~Meaney, T.~T.~N.~Nguyen, S.~Siltanen and B.~Sprung for helpful discussions.

G.S., T.H. and A.W. acknowledge funding by Deutsche Forschungsgemeinschaft (DFG, German Research Foundation) – CRC 1456 (project-ID 432680300), projects B06 and C03. 
A.W. additionally acknowledges support by DFG RTG 2756 (project-ID 449750155), project A4.
M.B. acknowledges support from DESY (Hamburg, Germany), a member of the Helmholtz Association HGF and from the German
Research Foundation, project BU 2327/20-1.
A.H. acknowledges support from the Research Council of Finland with the Flagship of Advanced Mathematics for Sensing Imaging and Modelling proj. 359186. Centre of Excellence of Inverse Modelling and Imaging proj. 353093, and the Academy Research Fellow proj. 338408. 

\begin{figure}[H]
    \centering

    \includegraphics[width=0.7\textwidth]{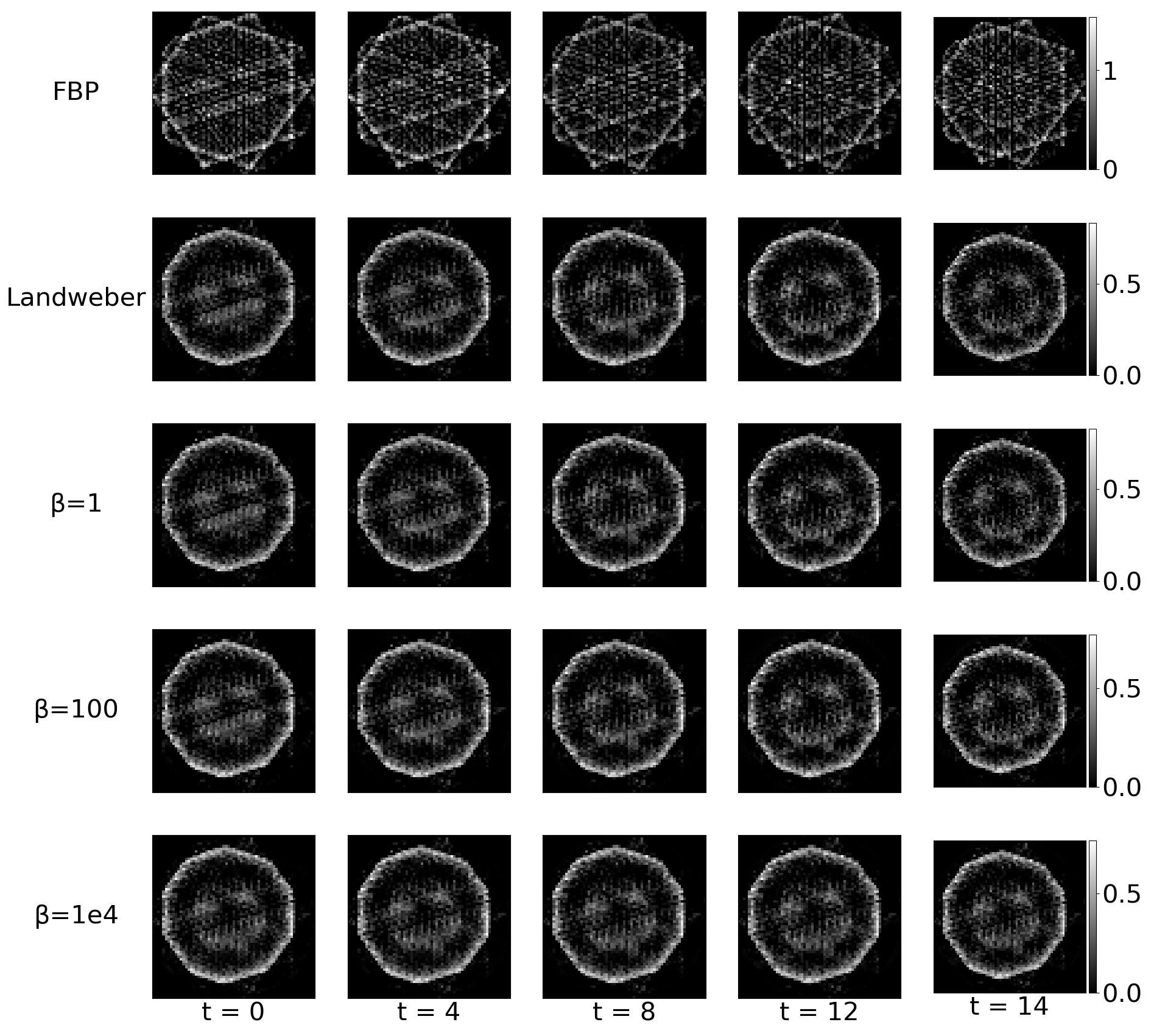}
    \caption{\textbf{Experiment description:} Comparison of FBP, Landweber and Algorithm~\ref{algo:timeDerivative} for $\gamma = 10$ and different regularization parameters $\beta$ for the emoji data
    at time points $t = 0,5,10,14$ for the same $5$ equidistantly spaced angles per time step. We added the non-negativity constraint for all methods.}
    \label{fig:CTrecoEmoji}
\end{figure}

\appendix
\vspace{-1ex}
\section{Proof for Lemma~\ref{lem:ThorstenKazimierski}}
    \textbf{Step 1:} $(A)$ is equivalent to
    \begin{enumerate}[label=(\alph*)]
        \item $\exists L'>0$ such that $\forall x,y \in X$ with $\| y \| \leq \| x \| = 1$, $\| x -y \| \leq \frac{1}{2}$ holds $ \| j_s(x) - j_s(y) \|_{X^*} \leq L' \| x - y \|_X^{s-1}.$
    \end{enumerate}
        We 
        have $(A) \Rightarrow (a)$ with $L' = L$. 
        To prove $(a) \Rightarrow (A)$ it suffices to consider the case $\| x \| \geq \| y\|$ due to symmetry. Furthermore, we can express $x$ and $y$ by $ax'$ and $ay'$ for $\| x' \| = 1, a>0$. 
        With Lemma~\ref{lem:dualityPtoQ}, we get
        \vspace{-1ex}
        \begin{align*}
            a^{s-1} \| j_s(x') - j_s(y') \| \leq L' a^{s-1} \| x' - y' \|^{s-1}
        \end{align*}
        and can consider without loss of generality $\| y \| \leq \| x \| = 1$.
        If $\| x - y \| > \frac{1}{2}$, we get
        \vspace{-1ex}
        \begin{align*}
            \| j_s(x) - j_s(y) \| \leq \underbrace{\| j_s(x) \|}_{=\|x\|^{s-1}} + \underbrace{\| j_s(y) \|}_{=\|y\|^{s-1}} \leq 1 + 1 = 2^s \left( \frac{1}{2} \right)^{s-1} < 2^s \| x -y \|^{s-1}.
        \end{align*}
    \textbf{Step 2:} The proof is analogous to Step 1: $(B)$ is equivalent to 
    \begin{enumerate}[label=(\alph*)]
        \setcounter{enumi}{1}
        \item 
        $\forall  x,y \in X$ with $\| y \| \leq \| x \| = 1$ $\| x -y \| \leq \frac{1}{2}$ we find $D'>0$ such that
           $ \| j_q(x) - j_q(y) \| \leq D' \| x - y \|^{s-1}$.
    \end{enumerate}

    \textbf{Step 3:}
    We show $(a) \Leftrightarrow (b)$. Let $\| y \| \leq \| x \| = 1$ and $\| x -y \| \leq \frac{1}{2}$ and we start by showing $(a) \Rightarrow (b)$. 
    With Lemma~\ref{lem:dualityPtoQ} and the definition of the duality mapping we have
    \begin{align*}
        \| j_q(x) - j_q(y) \|
        &= \Bigl \| \underbrace{\| x \|^{q-s}}_{=1} j_s(x) - \| y \|^{q-s} j_s(y) + j_s(y) - j_s(y) \Bigr \| 
        \leq \| j_s(x) - j_s(y) \| + \| j_s(y) \| \Bigl | \| x \|^{q-s} - \| y \|^{q-s} \Bigr | \\
        &\leq L' \|x -y\|^{s-1} + \underbrace{\| y \|^{s-1}}_{\leq 1} K \| x -y \| 
        \leq L' \|x -y\|^{s-1} + K \| x -y \|^{2-s} \| x -y \|^{s-1} \\
        & \leq L' \| x - y \|^{s-1} + K \left( \frac{1}{2} \right)^{2-s} \| x -y \|^{s-1} 
         = \left(L' + K \left( \frac{1}{2} \right)^{2-s} \right) \| x -y \|^{s-1},
    \end{align*}
    which is inequality $(b)$ with $D' = L' + K 2^{s-2} $ where $K$ is the Lipschitz constant of the function $f: [\frac{1}{2} , 1] \to \R$ with $t \mapsto t^{q-s}$. This function can be used since $1 - \|y\| = \|x\| - \|y\| \leq \|x-y\| \leq \frac{1}{2}$ and thus $\frac{1}{2} \leq \|y\| \leq \|x\| = 1$. \\
    Now, we show $(b) \Rightarrow (a)$. Using again Lemma~\ref{lem:dualityPtoQ} we get as above
    \begin{align*}
        \| j_s(x) - j_s(y) \| 
        &= \| \|x\|^{s-q} j_q(x) - \|y\|^{s-q} j_q(y) \| 
        \leq \| j_q(x) - j_q(y) \| + \|j_q(y) \| \left | \|x\|^{s-q} - \|y \|^{s-q} \right | \\
        &\leq \left(D' + K' \left( \frac{1}{2} \right)^{2-s} \right) \| x -y \|^{s-1},
    \end{align*}
    which is inequality $(a)$ with $L' = D' + K' 2^{s-2} $ where $K'$ is the Lipschitz constant of the function $f: [\frac{1}{2} , 1] \to \R$ with $h \mapsto h^{s-q}$.

\bibliographystyle{plain}
\bibliography{litDIP}

\pagestyle{myheadings}
\thispagestyle{plain}

\end{document}